\UseRawInputEncoding
\documentclass[final,hidelinks,onefignum,onetabnum]{siamart251216}

\UseRawInputEncoding

\usepackage{lipsum}
\usepackage{amsfonts}
\usepackage{graphicx}
\usepackage{epstopdf}
\usepackage{algorithmic}
\ifpdf
  \DeclareGraphicsExtensions{.eps,.pdf,.png,.jpg}
\else
  \DeclareGraphicsExtensions{.eps}
\fi

\newsiamremark{remark}{Remark}
\newsiamremark{hypothesis}{Hypothesis}
\crefname{hypothesis}{Hypothesis}{Hypotheses}
\newsiamthm{claim}{Claim}
\newsiamremark{fact}{Fact}
\crefname{fact}{Fact}{Facts}

\headers{Globally Convergent Third-Order Newton Method}{Y. CAI, W. ZHU, C. CARTIS, AND G. ZARDINI}

\title{A Globally Convergent Third-Order Newton Method via Unified Semidefinite Programming Subproblems}

\providecommand{\thanksmark}[1]{\footnotemark[#1]}
\author{Yubo Cai\thanks{Laboratory for Information \& Decision Systems, Massachusetts Institute of Technology, Cambridge, MA (\email{yubocai@mit.edu}; \email{gzardini@mit.edu}).}
\and Wenqi Zhu\thanks{Mathematical Institute, University of Oxford, UK (\email{wenqi.zhu@maths.ox.ac.uk}; \email{cartis@maths.ox.ac.uk}).}
\and Coralia Cartis\thanksmark{2}
\and Gioele Zardini\thanksmark{1}
}

\usepackage{amsopn}

\ifpdf
\hypersetup{
  pdftitle={A Globally Convergent Third-Order Newton Method via Unified Semidefinite Programming Subproblems},
  pdfauthor={Y. CAI, W. ZHU, C. CARTIS, AND G. ZARDINI}
}
\fi

\newtheorem{assumption}{Assumption}

\usepackage[numbers,sort&compress]{natbib}
\usepackage{graphicx}
\usepackage{booktabs}
\usepackage{tikz}
\usetikzlibrary{positioning}

\usepackage{enumitem}
\usepackage{array}
\usepackage{placeins}

\usepackage[capitalize]{cleveref}
\crefname{unframedtheorem}{Theorem}{Theorems}
\crefname{lemma}{Lemma}{Lemmas}
\crefname{proposition}{Proposition}{Propositions}
\crefname{corollary}{Corollary}{Corollaries}
\crefname{remark}{Remark}{Remarks}
\crefname{definition}{Definition}{Definitions}
\crefname{assumption}{Assumption}{Assumptions}
\UseRawInputEncoding
\usepackage{xcolor}
\usepackage[acronym, nopostdot, nonumberlist]{glossaries}
\glsdisablehyper

\newacronym{almton}{ALMTON}{Adaptive Levenberg-Marquardt Third-Order Newton Method}
\newacronym{arp}{AR$p$}{adaptive regularization with models of order $p$}
\newacronym{sdp}{SDP}{semidefinite programming}
\newacronym{psd}{PSD}{positive semidefinite}
\newacronym{lm}{LM}{Levenberg-Marquardt}
\newacronym{sos}{SOS}{sums-of-squares}

\newcommand{\taylor}[2]{\Phi^{#1}_{f, #2}}

\begin{document}

\maketitle

\begin{abstract}
  We propose the \gls{almton} method for unconstrained nonconvex optimization;
  to our knowledge, the framework provides the first globally
  convergent realization of the unregularized third-order Newton method.
  Unlike the standard Adaptive Regularization framework with third-order models (AR$3$), which enforces global behavior through a quartic term, \gls{almton} employs an adaptive Levenberg-Marquardt (quadratic) regularization.
  This choice preserves a cubic model at every iteration, so that every subproblem is a tractable \gls{sdp}. In particular, \gls{almton}-Simple constructs exactly one logical SDP subproblem per nonterminal outer iteration, while \gls{almton}-Heuristic may solve several SDP subproblems within its model phase.

  Algorithmically, \gls{almton} follows a mixed-mode strategy: it attempts an unregularized third-order step whenever the cubic Taylor model admits a strict local minimizer with adequate curvature, and activates (or increases) quadratic regularization only when needed to ensure that the model is well posed and the step is globally reliable.
  For the Heuristic strategy, under the stated assumptions and an exact
  local-minimizer oracle, we prove finite termination at an
  $\epsilon$-approximate first-order stationary point with
  $\mathcal{O}(\epsilon^{-2})$ worst-case evaluation complexity.  Moreover, if
  an accepted iterate enters the stated neighborhood of a positive-definite
  local minimizer, subsequent nonterminal steps recover the unregularized
  third-order Newton recursion and its cubic local rate.
  Under a common post hoc terminal audit over 4,500 deterministic starts on five two-dimensional nonconvex problems, both \gls{almton} variants satisfy the terminal criterion on 99.91\% of the instances, compared with 55.42\% for the unregularized third-order Newton method.  This gain is obtained at substantial semidefinite-programming cost: AR$2$-Simple is faster on this test set, so the numerical evidence supports robust globalization of the unregularized cubic model rather than overall empirical dominance.
  We also provide a controlled audit of implementation scaling and identify
  protocol-dependent computational boundaries of the current SDP realization.
\end{abstract}

\begin{keywords}
  third-order Newton method; unconstrained nonconvex optimization; global convergence; adaptive regularization.
\end{keywords}

\begin{MSCcodes}
  90C26, 49M15, 65K10
\end{MSCcodes}

\section{Introduction}\label{sec:intro}
We consider unconstrained nonconvex optimization problems of the form
\begin{equation} \label{prob:unconstrained_optimization}
  \min _{x \in \mathbb{R}^n} f(x), \vspace{-0.5em}
\end{equation}
which arise in diverse application domains, including neural network training~\cite{goodfellow2016deep,arora2018convergence,bottou2018optimization,berahas2019quasi,kurbiel2017training}, digital filter design~\cite[Chapter~9.4]{antoniou2007practical}, and volatility estimation~\cite[Chapter~6]{cornuejols2018optimization}.
Throughout, we assume that $f:\mathbb{R}^n\to\mathbb{R}$ is $p$-times continuously differentiable (for some $p\ge 1$) and bounded below by a finite value $f_{\mathrm{low}}$.

Algorithms for~\eqref{prob:unconstrained_optimization} are typically iterative, producing a sequence $\{x_k\}$ via updates based on local models of $f$.
The accuracy and geometry captured by these models depend strongly on the order of derivative information used.
First-order methods use only $\nabla f$ and are inexpensive per iteration, while Newton-type methods incorporate $\nabla^2 f$ and often achieve fast local convergence when the quadratic model is reliable.
Higher-order methods, and in particular third-order methods, incorporate derivatives of order $p\ge 3$ to build richer Taylor models that can remain accurate over a larger neighborhood and better reflect nonconvex curvature~\cite{ahmadi2024higher,cartis2020concise}.
This paper focuses on third-order methods.

A central challenge in nonconvex optimization is to combine \emph{local efficiency} with \emph{global reliability}.
For first- and second-order methods, this is commonly achieved through line-search or trust-region globalization~\cite{dennis1996numerical,conn2000trust,yuan2015recent}, which enforce sufficient decrease in the objective even when the local model is inaccurate far from the current iterate.
Extending these ideas to higher-order models is more delicate: higher-degree Taylor polynomials can be nonconvex and even unbounded below, and their minimization subproblems can become substantially harder to solve robustly.

A principled approach to this difficulty is \emph{adaptive regularization with higher-order models}.
The adaptive regularization framework of order $p$ (AR$p$)~\cite{birgin2017worst,cartis2011adaptive} augments the $p$th-order Taylor model with a $(p+1)$st-degree regularizer,
\begin{equation*}
\frac{\sigma_k}{p+1}\,\| x - x_k \|^{p+1}, \qquad \sigma_k > 0,
\end{equation*}
and updates $\sigma_k$ dynamically to guarantee progress.
Under standard assumptions, AR$p$ methods admit worst-case evaluation complexity bounds for achieving approximate first- and second-order stationarity~\cite[Theorem~3.6]{cartis2020concise}.
However, these bounds do not reflect the computational cost of solving the regularized subproblem at each iteration. A range of specialized subproblem solvers for nonconvex high-order models has therefore been developed; see, for example, \cite{zhu2022quartic,cartis2025second,cartis2025cubic}.
In many regimes, the practical efficiency of AR$p$ methods is dictated not by the number of evaluations but by the difficulty (and variability) of the inner subproblem solver.

Motivated by this computational bottleneck,~\cite{silina2022unregularized} proposed an \emph{unregularized third-order Newton method} that updates $x_{k+1}$ using a strict local minimizer of the \emph{cubic} Taylor model.
A key enabling observation is that strict local minimizers of multivariate cubic polynomials can be computed via a \gls{sdp} formulation~\cite[Section~6]{ahmadi2022complexity}. This yields a unified subproblem representation that depends only on the cubic coefficients.
Under strong assumptions (e.g., strong convexity and Lipschitz continuity of the third derivative), the method achieves a cubic local convergence rate for sufficiently good initialization~\cite[Theorem~3.2]{silina2022unregularized}.

Despite its appeal, the unregularized approach is inherently local and may be ill-posed away from a minimizer: the cubic Taylor model may fail to admit any strict local minimizer, rendering the \gls{sdp} infeasible.
A standard safeguard is to add a Levenberg--Marquardt (LM) quadratic term with a sufficiently large positive coefficient, which guarantees existence of a strict local minimizer under the positive-boundary condition in \cref{lem:positive-alpha-backstop}; see also \cite[Proposition~5.1]{silina2022unregularized}.
This resolves well-posedness, but does not by itself provide a globalization mechanism or a worst-case complexity guarantee for general nonconvex objectives.

Therefore, in this work we introduce \gls{almton}, which embeds the unregularized
third-order Newton step into an \emph{adaptive} framework whose Heuristic
instantiation admits the global guarantee established below.
The key design choice is to replace the standard quartic regularization used in AR$3$ with an \emph{adaptive LM quadratic} regularization.
Crucially, this preserves a cubic model at every iteration, so that both unregularized and regularized steps can be computed through the same SDP template.
Algorithmically, \gls{almton} follows a mixed-mode strategy: it prioritizes unregularized third-order steps when the cubic model is well behaved, and increases quadratic regularization only when necessary to restore well-posedness and enforce global progress.
The numerical experiments audit basin robustness, implementation scaling, and
finite-budget trajectories.  Relative to UTON, LM globalization substantially
enlarges the set of starting points from which an audited stationary point is
returned, and on the two-dimensional benchmark of
\Cref{subsec:experiment_1:robustness} it does so in fewer recorded outer
iterations than the tested second-order comparators.  Since the per-iteration
SDP dominates the recorded time, this margin does not transfer to wall-clock
cost, and it is not interpreted as evidence of a causal advantage from
third-order information.

\subsection{Contributions and organization}

The primary contribution of this paper is the proposal of ALMTON, an adaptive
Levenberg--Marquardt third-order Newton framework that, to our knowledge,
introduces, through the Heuristic strategy, the first globally convergent
realization of the unregularized third-order Newton method of
\cite{silina2022unregularized}. The method
prioritizes the unregularized cubic Taylor step whenever this step is well
defined and sufficiently curved, and activates a quadratic LM regularization
only when needed to restore well-posedness and global progress.  If an accepted
iterate enters the stated neighborhood of a positive-definite local minimizer,
the subsequent nonterminal steps recover the unregularized third-order Newton
recursion and its cubic local rate.  For the Heuristic strategy, under the
stated assumptions and an exact local-minimizer oracle, we prove finite
termination at an \(\epsilon\)-first-order stationary point and an
\(\mathcal O(\epsilon^{-2})\) worst-case evaluation-complexity bound. We do not
claim that this improves the evaluation-complexity exponent of fully
regularized AR3 methods. Rather, the advantage of ALMTON is that each
iteration retains a tractable and uniform inner solve: both the unregularized
and LM-regularized trial steps are computed from a cubic model through the
same semidefinite programming template.

A second contribution is to isolate the computational benefit of preserving
the cubic structure. Several existing globally convergent third-order or
higher-order frameworks obtain tractable inner solves by using fourth-degree
regularized models, for example through SOS-convexification or sufficiently
large quartic regularization; see, e.g.,
\cite{ahmadi2024higher,zhu2024global,zhu2026sufficientlyregularizednonnegativequartic}. These approaches are powerful, but
their tractable subproblems are built on degree-four polynomial models. By contrast, ALMTON model remains cubic, and the same cubic-lifting SDP applies at every iteration. This yields a substantially
smaller SDP: the cubic local-minimization SDP has \(O(n^2)\) scalar variables
and positive semidefinite (PSD) constraints of size \(O(n)\), whereas a full SOS relaxation for a generic
quartic model has \(O(n^4)\)
scalar Gram variables.

On the empirical front, we complement the aggregate benchmarks with a
fixed-start trajectory and a multi-start basin audit on the deliberately
structured Hairpin Turn problem.  We report the observed paths without
interpreting them as proof of a causal geometric mechanism; see
\Cref{subsec:experiment_3:high-order}.

The remainder of this paper is organized as follows.
\Cref{sec:prelim} introduces the necessary notation and reviews the unregularized third-order Newton method.
\Cref{sec:algorithm} details the \gls{almton} framework, including the specific logic for step acceptance and regularization updates.
\Cref{sec:convergence} presents the global convergence analysis and complexity bounds. Numerical results, including robustness checks and trajectory analyses, are discussed in \Cref{sec:numerical_experiments}, followed by concluding remarks in \Cref{sec:conclusion}.

\section{Preliminaries and Notation}\label{sec:prelim}
\paragraph{Norms and tensor notation}
Throughout this paper,~$\|\cdot\|$ denotes the standard Euclidean norm on $\mathbb{R}^n$.
For a tensor $T \in \mathbb{R}^{n \times \dots \times n}$ of order $j$, we define its induced operator norm as
\begin{equation}\label{eq:tensor norm}
  \|T\|_{[j]} \;:=\; \max_{\|v_1\|=\cdots=\|v_j\|=1} \big|\,T[v_1,\ldots,v_j]\,\big|.
\end{equation}
Let~$f: \mathcal{D} \subseteq \mathbb{R}^n \to \mathbb{R}$ be a function satisfying~$f \in \mathcal{C}^p(\mathcal{D})$.
The~$p$th-order derivative of $f$ at $x$ is a symmetric $p$th-order  tensor denoted by $\nabla^p f(x)$, with entries defined by
\begin{equation}\label{eq:p-th tensor}
  \big[\nabla^p f(x)\big]_{i_1,\ldots,i_p} \;=\; \frac{\partial^p f(x)}{\partial x_{i_1}\cdots\partial x_{i_p}}.
\end{equation}
The notation $\nabla^p f(x)[s]^p$ represents the tensor acting on the vector $s \in \mathbb{R}^n$ repeated $p$ times, i.e., $\nabla^p f(x)[s]^p = \sum_{i_1,\dots,i_p} [\nabla^p f(x)]_{i_1,\dots,i_p} s_{i_1}\cdots s_{i_p}$.
Specifically, for $p=1$ and $p=2$, we recover the gradient vector $\nabla f(x)$ and the Hessian matrix $\nabla^2 f(x)$, respectively.

\paragraph{Taylor models}
For~$p \ge 1$, the~$p$th-order Taylor expansion of~$f$ around an iterate~$x_k$ is:
\begin{equation}\label{eq:taylor model}
  \taylor{p}{x_k}(x) \;:=\; f(x_k) + \sum_{j=1}^p \frac{1}{j!}\,\nabla^j f(x_k)[x-x_k]^j.
\end{equation}
Restricting our attention to~$p=3$ and defining the step~$s := x - x_k$, the cubic model can be written explicitly using tensor slices:
\begin{equation}\label{eq:regularized-series}
  \taylor{3}{x_k}(x) \;=\; f(x_k) + \nabla f(x_k)^\top s + \frac{1}{2}\,s^\top \nabla^2 f(x_k) s
  + \frac{1}{6}\sum_{i=1}^n s_i \cdot s^\top \big(\nabla^3 f(x_k)\big)_i s,
\end{equation}
where~$\big(\nabla^3 f(x_k)\big)_i \in \mathbb{R}^{n \times n}$ denotes the $i$-th matrix slice of the third derivative tensor.
Specifically, this slice corresponds to the Hessian of the $i$-th partial derivative:~$\big(\nabla^3 f(x_k)\big)_i = \nabla^2 (\partial_i f)(x_k)$.
This representation aligns with the general form of a multivariate cubic polynomial introduced in \Cref{eq:cubic-poly}, where the coefficients are identified as~$c = f(x_k)$,~$b = \nabla f(x_k)$,~$Q = \nabla^2 f(x_k)$, and~$H_i = \big(\nabla^3 f(x_k)\big)_i$.

\paragraph{Cubic polynomials and subproblem formulation}
Generalizing the specific Taylor model structure, we consider the minimization of a generic multivariate cubic polynomial $\psi: \mathbb{R}^n \rightarrow \mathbb{R}$. Consistent with the tensor slice notation introduced in \Cref{eq:regularized-series}, $\psi$ is parameterized by a set of symmetric matrices~$\{H_i\}_{i=1}^n \subset \mathbb{S}^n$ (representing the third-order interaction), a symmetric matrix~$Q \in \mathbb{S}^n$, a vector~$b \in \mathbb{R}^n$, and a scalar~$c \in \mathbb{R}$. We assume the slice-compatibility identities $(H_i)_{j\ell}=(H_j)_{i\ell}=(H_\ell)_{ij}$, so that the $H_i$ are the slices of one fully symmetric third-order tensor.
\begin{equation}\label{eq:cubic-poly}
  \psi(x) \;=\; \frac{1}{6} \sum_{i=1}^n x_i \cdot x^{\top} H_i x \;+\; \frac{1}{2} x^{\top} Q x \;+\; b^{\top} x \;+\; c.
\end{equation}
The gradient and Hessian of $\psi$ then satisfy:
\begin{align}
  \nabla \psi(x) &= \frac{1}{2}\sum_{i=1}^n x_i H_i x \;+\; Qx \;+\; b, \\
  \nabla^2 \psi(x)&=\sum_{i=1}^n x_i\,H_i\;+\;Q, \qquad
  \nabla^3 \psi(x)=\{H_i\}_{i=1}^n. \label{eq:grad-hess-slices}
\end{align}

A multivariate cubic polynomial~$\psi: \mathbb{R}^n \rightarrow \mathbb{R}$ has either no local minimizer, exactly one strict local minimizer, or infinitely many non-strict local minimizers.
Moreover,~$\bar{x}$ is a strict local minimizer iff~$\nabla\psi(\bar x)=0$ and $\nabla^2\psi(\bar x)\succ0$~\cite[Theorem 3.1, Corollary 3.4]{ahmadi2022complexity}.
If a strict local minimizer exists, it can be found by solving the following \gls{sdp}~\cite[Section 6.3.2]{ahmadi2022complexity}:
\begin{equation}\label{eq:sdp}
  \begin{aligned}
    \inf_{X\in\mathbb{S}^{n},\, x,v\in\mathbb{R}^n,\, y\in\mathbb{R}} \quad & \tfrac{1}{2}\,\mathrm{Tr}(QX) + b^\top x + \tfrac{1}{2}y \\
    \text{s.t.}\quad & \tfrac{1}{2}\,\mathrm{Tr}(H_i X) + e_i^\top Qx + b_i = 0,\;\; i=1,\ldots,n, \\
    & v = \sum_{i=1}^n \mathrm{Tr}(H_i X)\,e_i + Qx, \\
    & \quad
    \begin{bmatrix}
      \sum_{i=1}^n x_i H_i + Q & v \\
      v^\top & y
    \end{bmatrix} \succeq 0, \
    \begin{bmatrix} X & x \\ x^\top & 1
    \end{bmatrix} \succeq 0,
  \end{aligned}
\end{equation}
where~$\mathrm{Tr}$ denotes the trace and $\{e_i\}$ are the standard basis vectors.
This \gls{sdp} is strictly feasible when~$\psi$ has a unique strict local minimizer~\cite[Theorem 3.3]{silina2022unregularized}, and can be solved to arbitrary accuracy in polynomial time~\cite{vandenberghe1996semidefinite}.

\subsection{Higher-order Optimization Algorithms}\label{subsec:high-order-algorithm}
\paragraph{Unregularized third-order Newton and LM regularization}
Let~$\Phi^{3}_{f,x_k}(\cdot)$ denote the cubic Taylor model at $x_k$ (cf. \Cref{eq:regularized-series}).
The Unregularized Third-Order Newton method~\cite[Algorithm~1]{silina2022unregularized} generates iterates~$\{x_k\}$ analogously to classical Newton's method: it terminates when~$\|\nabla f(x_k)\|\le \epsilon,$
and otherwise computes a higher-order Newton step by taking~$x_{k+1}$ as a \emph{strict local minimizer} of the cubic model,
\begin{equation*}
    \text{choose }x_{k+1}\text{ as a strict local minimizer of }\Phi^{3}_{f,x_k}.
\end{equation*}
When such a minimizer exists, it can be found via the unified \gls{sdp} formulation in \Cref{eq:sdp}.
Far from a minimizer of~$f$, however,~$\Phi^{3}_{f,x_k}$ may admit no (strict) local minimizer.

As noted in the discussion on cubic polynomials (cf. \Cref{eq:cubic-poly}), the unregularized model~$\Phi^{3}_{f,x_k}$ is not guaranteed to possess a strict local minimizer; it may have no local minimizers or infinitely many non-strict ones, rendering the update step ill-defined.
A standard safeguard is to augment the model with a Levenberg--Marquardt (LM) regularization term and minimize instead:
\begin{equation*}
  m_{f,x_k}(x;\sigma) \;:=\; \Phi^{3}_{f,x_k}(x) \;+\; \sigma\,\|x-x_k\|^2.
\end{equation*}
For sufficiently large~$\sigma$,~$m_{f,x_k}(\cdot;\sigma)$ has a local minimizer~\cite[Prop.~5.1]{silina2022unregularized}.
A computable \emph{sufficient} threshold is
\begin{equation}\label{eq:alpha_LM}
  \begin{aligned}
    & g :=\big(|\partial_1 f(x_k)|,\ldots,|\partial_n f(x_k)|\big)^{\top}, \
    h :=\big(\|(\nabla^3 f(x_k))_1\|_2,\ldots,\|(\nabla^3 f(x_k))_n\|_2\big)^{\top}, \\
    & \alpha_{LM}(x_k)\;:=\;\sqrt{\frac{3}{2}\,\big(\|g\|\,\|h\|+g^\top h\big)}\;-\;\min\{0,\lambda_{\min}(\nabla^2 f(x_k))\}.
  \end{aligned}
\end{equation}
The following positive-boundary refinement of
\cite[Proposition~5.1]{silina2022unregularized} will be used.
\begin{lemma}[Positive LM backstop]\label{lem:positive-alpha-backstop}
For every $\sigma\ge\alpha_{LM}(x_k)$ with $\sigma>0$,
the regularized cubic model has a strict local minimizer.
\end{lemma}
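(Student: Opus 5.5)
The plan is to produce, for the regularized model
\[
m(s) := \Phi^{3}_{f,x_k}(x_k+s) + \sigma\|s\|^2 = f(x_k) + g_k^\top s + \tfrac12 s^\top (Q+2\sigma I)s + \tfrac16 \nabla^3 f(x_k)[s]^3,
\]
with $g_k=\nabla f(x_k)$ and $Q=\nabla^2 f(x_k)$, a closed ball $\bar B_r=\{\|s\|\le r\}$ with two properties:
\begin{enumerate}[label=(\roman*)]
\item $\nabla^2 m(s)\succ 0$ for all $s\in \bar B_r$;
\item $m(s)>m(0)$ on the sphere $\|s\|=r$.
\end{enumerate}
A global minimizer of the continuous $m$ over the compact ball $\bar B_r$ then exists. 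By (ii) it is not on the boundary, so it is an interior critical point $\bar s$. By (i), $\nabla^2 m(\bar s)\succ0$. The characterization of strict local minimizers of cubics recalled in \Cref{sec:prelim} (gradient zero and Hessian positive definite, \cite[Theorem 3.1, Corollary 3.4]{ahmadi2022complexity}) then makes $\bar s$ a strict local minimizer, which is the claim.

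The first step is to bound the cubic pieces by the vectors $g$ and $h$ of \eqref{eq:alpha_LM}. By \eqref{eq:grad-hess-slices},
\[
\nabla^2 m(s) = Q + 2\sigma I + \sum_i s_i H_i,
\qquad
\Bigl\|\sum_i s_i H_i\Bigr\|_2 \le \sum_i |s_i|\,h_i \le \|h\|\,\|s\|.
\]
Similarly $|\nabla^3 f(x_k)[s]^3| = |\sum_i s_i\, s^\top H_i s| \le \|h\|\,\|s\|^3$, and $|g_k^\top s|\le \|g\|\,\|s\|$.

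Set $\mu := \lambda_{\min}(Q)+2\sigma$. Since $\sigma\ge\alpha_{LM}(x_k)\ge -\min\{0,\lambda_{\min}(Q)\}$, we get
\[
\mu \ge \sigma + \sqrt{\tfrac32\bigl(\|g\|\,\|h\|+g^\top h\bigr)},
\]
and in particular $\mu\ge\sigma>0$. This is exactly where the positive-boundary hypothesis $\sigma>0$ is used. It covers the degenerate cases $g=0$ or $h=0$, where $\alpha_{LM}$ may vanish and the unregularized model could have only non-strict minimizers. In those cases I would treat things directly: if $h=0$ the model is a strictly convex quadratic; if $g=0$, the point $s=0$ itself is a critical point with Hessian $\mu I\succ0$.

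In the generic case I would take $r=\mu/(c\|h\|)$ for a constant $c>1$, which gives (i) with $\nabla^2 m\succeq(1-1/c)\mu I$. For (ii) the crude bound on the sphere is
\[
m(s)-m(0) \;\ge\; r\Bigl(-\|g\| + \tfrac{\mu}{2}r - \tfrac{\|h\|}{6}r^2\Bigr),
\]
and this reduces to a condition of the form $\mu^2 > C\,\|g\|\,\|h\|$.

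The main obstacle is matching the sharp constant $\tfrac32(\|g\|\|h\|+g^\top h)$ rather than some cruder multiple of $\|g\|\|h\|$. With $c=2$ the crude bound gives $C=24/5$. Since $\tfrac32(\|g\|\|h\|+g^\top h)\le 3\|g\|\|h\|$, this does not suffice. So I would not rely on a single isotropic radius.

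My first attempt would be to reproduce the finer argument behind \cite[Proposition~5.1]{silina2022unregularized}: work along rays, exploit the sign structure $\sum_i |s_i|\,(|\partial_i f|+\dots)$ that produces the $g^\top h$ term, and optimize the radius. Should that be too delicate, the fallback is to invoke \cite[Proposition~5.1]{silina2022unregularized} as a black box for the existence of a local minimizer $\bar s$ when $\sigma\ge\alpha_{LM}$. Strictness then has to be upgraded separately, by showing that the extra margin $\sigma>0$ forces $\nabla^2 m(\bar s)\succ0$. Concretely, I would rerun their estimate with $\sigma$ replaced by $\sigma-\delta$ for a small $\delta>0$ and absorb $\delta\|s\|^2$ as strict positivity.
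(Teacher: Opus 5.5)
\textbf{Gap: the direct route you abandon already works.} You drop it because of a slip in the final comparison, and you replace it with alternatives that are either not carried out or unsound. When you say that $C=24/5$ ``does not suffice,'' you compare $C\|g\|\,\|h\|$ with $a^2$, where $a:=\sqrt{\tfrac32(\|g\|\,\|h\|+g^\top h)}$. The quantity that must dominate is $\mu^2$, not $a^2$. Your own bound $\mu\ge\sigma+a$, together with $\sigma\ge\alpha_{LM}(x_k)\ge a$, gives $\mu\ge 2a$; this is where the factor $2$ in $2\sigma I$ enters. Since $g$ and $h$ are entrywise nonnegative, $g^\top h\ge0$, and therefore
\[
\mu^2\ \ge\ 4a^2\ =\ 6\bigl(\|g\|\,\|h\|+g^\top h\bigr)\ \ge\ 6\|g\|\,\|h\|\ >\ \tfrac{24}{5}\|g\|\,\|h\|
\]
whenever $\|g\|\,\|h\|>0$. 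With $r=\mu/(2\|h\|)$ this gives $\nabla^2 m\succeq\tfrac{\mu}{2}I$ on the ball and $m(s)-m(0)\ge r\bigl(\tfrac{5\mu^2}{24\|h\|}-\|g\|\bigr)>0$ on the sphere. Your compactness argument then completes the proof. The same estimate also covers $g=0$, since then it only needs $\mu>0$, so only $h=0$ needs separate treatment. (There the Hessian is $Q+2\sigma I\succeq\mu I$, not $\mu I$.) No ray-wise refinement of the Silina--Zhang argument is needed: the constant $\tfrac32$ in $\alpha_{LM}$ leaves a large margin here.

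\textbf{The fallback would not work as described.} At the boundary $\sigma=\alpha_{LM}(x_k)$, the value $\sigma-\delta$ lies below the threshold, so the black-box proposition no longer applies to it. Even given a local minimizer $\bar s$ of the $(\sigma-\delta)$-model, adding $\delta\|s\|^2$ shifts the gradient at $\bar s$ by $2\delta\bar s$. So unless $\bar s=0$, $\bar s$ is not even a critical point of the $\sigma$-model, and nothing is ``absorbed.'' Relying on \cite[Proposition~5.1]{silina2022unregularized} at the boundary is exactly what this refinement avoids: for $f(t)=t+t^4$ at $0$, $\alpha_{LM}=0$ and the $\sigma=0$ model has no minimizer.

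\textbf{Comparison with the paper.} Once repaired, your proof differs from the paper's only in mechanism. The paper applies Brouwer's theorem to the stationarity map $\Psi(s)=-(Q+2\sigma I)^{-1}\bigl(g_k+\tfrac12\nabla^3 f(x_k)[s,s]\bigr)$ on the ball of radius $(\mu-\Delta)/\|h\|$, with $\Delta:=\sqrt{\mu^2-2\|g\|\,\|h\|}$. That needs only $\mu^2>2\|g\|\,\|h\|$ and gives the curvature floor $\Delta I$ on the ball. Your boundary-versus-center comparison with radius $\mu/(c\|h\|)$ needs $\mu^2>\tfrac{6c^2}{3c-1}\|g\|\,\|h\|$. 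That constant equals $\tfrac{24}{5}$ at $c=2$ and decreases to $3$ as $c\downarrow1$, but your route avoids fixed-point theory. Both thresholds are met because $\mu^2\ge 6\|g\|\,\|h\|$.
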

\begin{proof}
  A self-contained proof is given in Appendix~\ref{app:positive-alpha-backstop}.
\end{proof}

This refinement is needed in the finite
model-phase argument of \cref{prop:model-phase}, where invoking the backstop
must produce a bona fide strict local minimizer even when $\alpha_{LM}=0$.
The condition $\sigma>0$ cannot be omitted: for $f(t)=t+t^4$ at $x_k=0$,
one has $\alpha_{LM}=0$, whereas the unregularized cubic Taylor model is the
linear function $t$ and has no local minimizer.  The condition is automatic in
both strategies below: Simple assigns $\max\{1,\alpha_{LM}(x_k)\}>0$ when the
outer parameter is zero and thereafter multiplies positive values by $\gamma$,
whereas every Heuristic backstop update contains
$\gamma\max\{1,\tilde\sigma\}>0$.  Hence no parameter-update rule needs to be
modified.
We use~$\alpha_{LM}$ as a \emph{backstop} in the analysis; it need not be tight.

\begin{remark}[Unified subproblem solve]
  In both the unregularized case and the case regularized by LM, the model
  remains \emph{cubic}.
  Hence, whenever a (strict) local minimizer exists, we can solve the subproblem with the same \gls{sdp} template \eqref{eq:sdp}.
  This unification is a key practical advantage carried throughout the paper.
\end{remark}

\paragraph{AR\texorpdfstring{$p$}{p} models}

The AR$p$ framework augments~$\taylor{p}{\bar x}$ with a~$(p{+}1)$th-degree term:
\begin{equation*}
  \widetilde{m}^{p}_{f,\bar x}(x,\tilde\sigma)=f(\bar x)+\sum_{j=1}^p \frac{1}{j!}\nabla^j f(\bar x)[x-\bar x]^j + \frac{\tilde\sigma}{p+1}\|x-\bar x\|^{p+1}.
\end{equation*}
For sufficiently large~$\tilde\sigma$, the model~$\widetilde{m}^{p}_{f,\bar x}$ is bounded below (and coercive), guaranteeing that its global minimizer exists.
The regularization parameter is adapted by a ratio test to ensure sufficient descent~\cite{cartis2011adaptive,cartis2011adaptive2,birgin2017worst,cartis2020concise,cartis2024efficient}.
In the special case~$p=3$ (AR$3$), the added term is quartic.

\paragraph{Trade-offs motivating our approach}
The AR$p$ philosophy is to guarantee robustness by \emph{regularizing the model}: add a ($p+1$)th-degree term with a sufficiently large coefficient $\tilde{\sigma}$, obtain a coercive surrogate whose global minimizer is guaranteed to exist, then select steps via a ratio test. Under standard assumptions, AR$p$ methods admit worst-case evaluation complexity bounds for achieving approximate first- and second-order stationarity~\cite[Theorem~3.6]{cartis2020concise}.
A sequence of specialized solvers for the nonconvex high-order regularization subproblems has therefore been developed, including quartic, quadratic--quartic, and cubic--quartic subproblem approaches~\cite{zhu2022quartic,cartis2025second,cartis2025cubic}.
This works well conceptually, but in practice, it creates two frictions that are central to our design choices.

First, for~$p=3$ (AR$3$), the quartic regularization ensures coercivity, yet the resulting subproblem may remain nonconvex for moderate~$\tilde{\sigma}$, and there is no single ``one-size-fits-all'' solver that covers the spectrum of problems and accuracies practitioners need.
Implementations often rely on problem-specific routines or black-box smooth minimizers with their own heuristics.
For the standard ARC ($p=2$) subproblem, efficient iterative methods, such as the factorization-based MCM algorithm~\cite[Algorithm 6.1]{cartis2011adaptive}, are often used to find the global minimizer.
For the AR$3$ ($p=3$) subproblem, common approaches within the ``AR$3$-BGMS''~\cite{cartis2024efficient} variant include specialized algorithms such as Gencan~\cite[Procedure 4.2]{cartis2024efficient}, formulations based on \gls{sos} programming~\cite{zhu2024global,ahmadi2024higher,lasserre2001global,zhu2025globaloptimalitycharacterizationsalgorithms,zhu2026sufficientlyregularizednonnegativequartic}, and the use of AR$2$-based~\cite[Procedure 3.2, 4.1]{cartis2024efficient} methods as local solvers.
By contrast, with an LM term the model remains \emph{cubic} for all~$\sigma$.
Whenever a (strict) local minimizer exists, whether unregularized or with LM,
we can invoke the \emph{same} \gls{sdp} template \eqref{eq:sdp}, with shared
certificate checks and a consistent stopping interface.  The reported runs do
not use solver warm starts.

Second, large regularization can erode local fidelity.
As~$\tilde\sigma$ grows, the regularizer increasingly dominates the Taylor terms, making predicted decrease less representative of the true objective's behavior, especially far from a solution.
Empirically this can manifest as conservative steps, more unsuccessful iterations, and weaker correlation in the ratio test.
In the LM-regularized cubic setting the model's polynomial degree and
representation remain fixed.  This is a design motivation rather than a
guarantee of better calibration or lower cost; those effects are assessed
empirically below.

The practical upshot is a \emph{mixed-mode} strategy: prefer an unregularized third-order step when the cubic model admits a strict local minimizer and curvature is adequate; otherwise, increase $\sigma$ to cross a computable backstop (e.g., $\alpha_{LM}$) that ensures existence of a local minimizer.
This retains the desirable fast local behavior when available, while providing a principled globalization mechanism---without switching subproblem classes across iterations.
Our focus on LM-regularized cubics is therefore not a rejection of AR$p$ theory, but a complementary design that prioritizes a unified subproblem solver and more stable model--objective fidelity.

\subsection{Standing assumptions and basic bounds}
To facilitate the analysis, we adopt the following standard assumptions.

\begin{assumption}[Smoothness and lower boundedness]\label{assump:1}
  Let~$p\ge3$ be fixed.
  We consider functions~$f:\mathbb{R}^n\to\mathbb{R}$ satisfying:
  \begin{enumerate}
    \item $f\in\mathcal{C}^p(\mathbb{R}^n)$, i.e., $f$ is $p$-times \emph{continuously} differentiable;
    \item $f$ is bounded below by $f_{\mathrm{low}}$;
    \item the $p$th-order derivative is Lipschitz (in operator norm): there exists $L\ge0$ such that, for all $x,y\in\mathbb{R}^n$,
      \begin{equation}\label{eq:lipschitz}
        \big\|\nabla^p f(x)-\nabla^p f(y)\big\|_{[p]} \;\le\; (p-1)!\,L\,\|x-y\|.
      \end{equation}
  \end{enumerate}
\end{assumption}

\cref{assump:1} aligns with the standing conditions in high-order methods (e.g., AR$p$) and yields the usual Taylor remainder bounds~\cite{birgin2017worst,cartis2011adaptive,cartis2020concise,zhu2024global,cartis2025second}.

\begin{assumption}[Uniform bounds along the iterates \cite{zhu2024global}]
  \label{assump:2}
  Along the sequence $\{x_k\}$ produced by the algorithm, there exist finite constants $\{\Lambda_j\}_{j=1}^p$ such that
  \begin{equation*}
    \big\|\nabla^j f(x_k)\big\|_{[j]} \;\le\; \Lambda_j, \qquad \forall\,k\ge0,\; j=1,\ldots,p.
  \end{equation*}
\end{assumption}

\begin{remark}
  \cref{assump:2} is \emph{not} a global requirement; it only constrains the derivatives \emph{along the iterates}.
  It is implied by \cref{assump:1} whenever the iterates remain in a bounded set---which holds here, since $\{f(x_k)\}$ is nonincreasing (\Cref{theorem:monotonicity}) and bounded below by $f_{\mathrm{low}}$, so the iterates stay in a sublevel set of $f$ (bounded when $f$ has bounded sublevel sets). The complexity constants enter only through $\{\Lambda_j\}$ via $\alpha_{\max}$ in \cref{eq:alpha_max}; we make this dependence explicit there.
\end{remark}

We now recall the Taylor remainder bounds that will be used repeatedly.

\begin{lemma}[Taylor model bounds]\label{lemma:taylor_model}
  Under \cref{assump:1}, for all $x,x_k\in\mathbb{R}^n$,
  \begin{equation}\label{eq:upper-bound-gradient}
    f(x) \le \taylor{p}{x_k}(x) \;+\; \frac{L}{p}\,\|x-x_k\|^{p+1}, \ \big\|\nabla f(x) - \nabla \taylor{p}{x_k}(x)\big\| \le L\,\|x-x_k\|^{p}.
  \end{equation}
\end{lemma}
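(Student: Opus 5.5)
We would prove both inequalities in \cref{lemma:taylor_model} from the integral form of Taylor's theorem, using the Lipschitz condition \eqref{eq:lipschitz} on $\nabla^p f$. Fix $x,x_k$, set $s:=x-x_k$, and write $x_t:=x_k+ts$ for $t\in[0,1]$. Since $f\in\mathcal C^p$, the standard expansion with integral remainder gives
\begin{equation*}
f(x)=\taylor{p-1}{x_k}(x)+\frac{1}{(p-1)!}\int_0^1(1-t)^{p-1}\,\nabla^p f(x_t)[s]^p\,dt .
\end{equation*}
Subtracting $\taylor{p}{x_k}(x)=\taylor{p-1}{x_k}(x)+\frac{1}{p!}\nabla^pf(x_k)[s]^p$ and using $\int_0^1(1-t)^{p-1}dt=1/p$, we obtain
\begin{equation*}
f(x)-\taylor{p}{x_k}(x)=\frac{1}{(p-1)!}\int_0^1(1-t)^{p-1}\big(\nabla^p f(x_t)-\nabla^pf(x_k)\big)[s]^p\,dt .
\end{equation*}

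By the definition \eqref{eq:tensor norm} of the operator norm and \eqref{eq:lipschitz}, the integrand is bounded by $(p-1)!\,L\,t\|s\|^{p+1}(1-t)^{p-1}$. Hence the remainder is at most $L\|s\|^{p+1}\int_0^1 t(1-t)^{p-1}dt=\frac{L}{p(p+1)}\|s\|^{p+1}$, which is no larger than $\frac{L}{p}\|s\|^{p+1}$. This gives the first inequality.

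For the gradient bound we apply the same argument to $\nabla f$. Since $\nabla\taylor{p}{x_k}(x)=\sum_{j=1}^{p}\frac{1}{(j-1)!}\nabla^jf(x_k)[s]^{j-1}$ is the $(p-1)$th Taylor expansion of $\nabla f$, the integral remainder gives
\begin{equation*}
\nabla f(x)-\nabla\taylor{p}{x_k}(x)=\frac{1}{(p-2)!}\int_0^1(1-t)^{p-2}\big(\nabla^pf(x_t)-\nabla^pf(x_k)\big)[s]^{p-1}\,dt .
\end{equation*}
The norm of a vector $v$ equals $\max_{\|u\|=1}u^\top v$, so the norm of each integrand is at most $\|\nabla^pf(x_t)-\nabla^pf(x_k)\|_{[p]}\|s\|^{p-1}\le (p-1)!\,L\,t\|s\|^{p}$. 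Therefore
\begin{equation*}
\big\|\nabla f(x)-\nabla\taylor{p}{x_k}(x)\big\|\le (p-1)L\|s\|^p\int_0^1 t(1-t)^{p-2}dt=\frac{L}{p}\|s\|^p\le L\|s\|^p .
\end{equation*}

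The only delicate point is bookkeeping. The $(p-1)!$ normalization in \eqref{eq:lipschitz} must cancel the factorials correctly, which leaves the slack absorbed above. The integral-remainder form also needs $p\ge2$ for the gradient expansion, and this is guaranteed since $p\ge3$. Everything else is routine.
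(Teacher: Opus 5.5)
Your proof is correct and is essentially the argument behind the paper's proof. The paper simply cites Birgin et al.\ (their Eqs.~(2.6)--(2.7)), which come from the same integral-form Taylor remainder under the $(p-1)!\,L$ Lipschitz normalization; the only difference is that you keep the factor $t$ from $\|x_t-x_k\|=t\|s\|$ rather than bounding it by $1$, so you obtain the sharper constants $L/(p(p+1))$ and $L/p$, which immediately imply the stated $L/p$ and $L$.
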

\begin{proof}
  See~\cite[Eq.\ (2.6)--(2.7)]{birgin2017worst}.
\end{proof}

\begin{corollary}[Specialization to $p=3$]\label{cor:p3}
  With $p=3$ in \cref{lemma:taylor_model}, for $s:=x-x_k$,
  \begin{equation*}
    f(x) \;\le\; \taylor{3}{x_k}(x) \;+\; \frac{L}{3}\,\|s\|^{4},
    \qquad
    \big\|\nabla f(x)-\nabla \taylor{3}{x_k}(x)\big\| \;\le\; L\,\|s\|^{3}.
  \end{equation*}
\end{corollary}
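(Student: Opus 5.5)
The corollary is the case $p=3$ of \cref{lemma:taylor_model}, so the plan is to substitute $p=3$ into both inequalities of \eqref{eq:upper-bound-gradient} and rename the displacement $x-x_k$ as $s$. The hypotheses of \cref{lemma:taylor_model} are exactly \cref{assump:1}. That assumption requires $p\ge3$, so $p=3$ is an admissible choice: $f\in\mathcal{C}^3(\mathbb{R}^n)$, and the third derivative satisfies \eqref{eq:lipschitz} with constant $(3-1)!\,L=2L$. With this choice the lemma applies for all $x,x_k\in\mathbb{R}^n$ without further conditions.

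The substitution proceeds in three steps. First, set $p=3$ in the function-value bound. This gives $f(x)\le \taylor{3}{x_k}(x)+\tfrac{L}{3}\|x-x_k\|^{4}$, since $p+1=4$ and $L/p=L/3$. Second, set $p=3$ in the gradient bound. This gives $\|\nabla f(x)-\nabla\taylor{3}{x_k}(x)\|\le L\|x-x_k\|^{3}$. Third, write $s:=x-x_k$. By \eqref{eq:regularized-series}, $\taylor{3}{x_k}$ is then the cubic polynomial in $s$ used later by the algorithm. Replacing $\|x-x_k\|$ by $\|s\|$ yields the two displayed inequalities.

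No step is a real obstacle. The only point needing care is the normalization of the constant. The Lipschitz condition is stated with the factor $(p-1)!$ precisely so that the remainder constants take the clean form $L/p$ and $L$ used in \cite{birgin2017worst}. One should therefore check that $L$ here is the same $L$ as in \eqref{eq:lipschitz} with $p=3$, and not the raw Lipschitz constant $2L$ of $\nabla^3 f$.

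If a self-contained check is wanted, it follows from the integral form of Taylor's theorem. The difference $f(x)-\taylor{3}{x_k}(x)$ equals $\int_0^1\frac{(1-t)^{2}}{2}\big(\nabla^3 f(x_k+ts)-\nabla^3 f(x_k)\big)[s]^3\,dt$. Its absolute value is at most $\int_0^1\frac{(1-t)^2}{2}\,2L t\|s\|^4\,dt=\tfrac{L}{12}\|s\|^4\le\tfrac{L}{3}\|s\|^4$. The analogous computation for the gradient gives a bound of at most $L\|s\|^3$. This confirms that the constants stated in the corollary are valid, though the cited bounds are not tight.
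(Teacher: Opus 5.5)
Your proposal is correct and matches the paper, which gives no separate proof and obtains the corollary simply by setting $p=3$ in \cref{lemma:taylor_model}. You also rightly note that this uses the normalization $(3-1)!\,L=2L$ for the Lipschitz constant of $\nabla^3 f$; your optional integral-remainder check, which gives the sharper constants $L/12$ and $L/3$, is likewise correct but not needed.
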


\section{\gls{almton}}\label{sec:algorithm}
Building on \Cref{sec:prelim}, we now specialize to $p=3$ and give a complete description of our method.
The key design choice is to \emph{prefer} unregularized third-order steps whenever the cubic Taylor model admits a strict local minimizer with adequate curvature, and to \emph{fall back} to an LM-regularized cubic otherwise.
This preserves fast local behavior while providing a principled globalization mechanism, all within a unified cubic subproblem (solved by the \gls{sdp} in \cref{eq:sdp}).

\subsection*{Cubic model with LM regularization}
For any iterate~$x_k$ and regularization~$\sigma\ge0$, we define the cubic model
\begin{equation}\label{eq:taylor model with regularization}
  m_{f,x_k}(x;\sigma)\;:=\;\taylor{3}{x_k}(x)+\sigma\,\|x-x_k\|^2,
\end{equation}
so that~$m_{f,x_k}(x_k;\sigma)=f(x_k)$ and
\begin{equation*}
  \nabla m_{f,x_k}(x;\sigma)\;=\;\nabla \taylor{3}{x_k}(x)\;+\;2\sigma(x-x_k).
\end{equation*}
When~$\sigma$ is sufficiently large,~$m_{f,x_k}(\cdot;\sigma)$ admits a local minimizer (see \Cref{eq:alpha_LM}); conversely, when~$\sigma=0$, the model may be unbounded below or lack a local minimizer.
Our framework exploits this structure by prioritizing the unregularized case ($\sigma=0$) to exploit favorable local geometry, and increasing~$\sigma$ only when necessitated by the model's behavior.

We distinguish the two Hessians that enter the analysis.  For $s=x-x_k$, let
\begin{equation}\label{eq:A-M-definitions}
  A_k(\sigma):=\nabla^2f(x_k)+2\sigma I_n,
  \qquad
  M_k(s;\sigma):=A_k(\sigma)+\nabla^3f(x_k)[s].
\end{equation}
Thus $A_k(\sigma)$ is the model Hessian at the base point $x_k$, whereas
$M_k(s;\sigma)$ is the model Hessian at the trial point $x_k+s$.  Neither
matrix is the true trial Hessian
$\nabla^2f(x_k+s)$.  The model phase tests $A_k(\sigma)\succeq cI_n$; local
minimality supplies $M_k(s;\sigma)\succeq0$.

\begin{remark}[Cubic SDP versus quartic SOS lifting]
The difference between preserving a cubic model and passing to a quartic
regularized model is substantial at the level of the lifted SDP. The cubic
SDP used in ALMTON has
\[
\frac{n(n+1)}2+2n+1=O(n^2)
\]
scalar variables, together with two positive semidefinite (PSD) constraints of size \(n+1\). In
contrast, a full SOS relaxation for a generic fourth-degree model uses a
Gram matrix indexed by all monomials of degree at most two. This monomial
basis has size
$
N_2=\binom{n+2}{2}=\frac{(n+1)(n+2)}2=O(n^2),
$
so the Gram matrix contains \(N_2(N_2+1)/2=O(n^4)\) scalar variables.

For example, when \(n=10\), the cubic SDP has about \(76\) scalar variables
and two PSD blocks of size \(11\), whereas the quartic SOS relaxation has
\(2211\) scalar Gram variables and a PSD block of size \(66\). When \(n=20\),
the corresponding numbers are about \(251\) versus \(26796\) scalar variables,
with PSD block sizes \(21\) versus \(231\). Thus replacing the quadratic LM
term by a quartic regularizer would change the subproblem from an
\(O(n^2)\)-variable, \(O(n)\)-PSD-block SDP to an \(O(n^4)\)-variable,
\(O(n^2)\)-PSD-block SOS relaxation.
\end{remark}

\subsection*{The ALMTON Framework}

We first present the general control flow in \Cref{alg:ALMTON_Meta}.
This meta-algorithm encapsulates the initialization, termination criteria, and the acceptance ratio test.
Crucially, it delegates the specific mechanisms for computing the trial step~$s_k$ and updating the regularization parameter~$\sigma_{k+1}$ to a specific \emph{strategy} (denoted by $\mathcal{U}$).

\begin{algorithm}[h!]
  \begin{algorithmic}[1]
    \caption{Adaptive Levenberg--Marquardt Third-Order Newton Method (ALMTON): General Framework}
    \label{alg:ALMTON_Meta}
    \STATE \textbf{Step 0: Initialization.}
    Given $f$, $x_0$, tolerance $\epsilon>0$, and constants $c>0$, $l\in(0,c/6]$ (e.g., $l=c/10$), $\eta\in(0,1)$, $\gamma>1$, set $\sigma_0:=0$, compute $f(x_0)$, and set $k:=0$.
    \smallskip
    \STATE \textbf{Step 1: Termination.}
    Compute $\nabla f(x_k)$. If $\|\nabla f(x_k)\|\le \epsilon$, terminate with $x_\epsilon:=x_k$. Otherwise, compute $\nabla^2 f(x_k)$, $\nabla^3 f(x_k)$, and the threshold $\alpha_{LM}(x_k)$ from \cref{eq:alpha_LM}.
    \smallskip
    \STATE \textbf{Step 2: Step Calculation (Model Phase).}
    Execute the chosen strategy (e.g., \textbf{Simple} in \cref{alg:Strategy_Simple} or \textbf{Heuristic} in \cref{alg:Strategy_Heuristic}) to obtain the trial step $s_k$ and phase regularization $\tilde\sigma_k$.
    \smallskip
    \STATE \textbf{Step 3: Acceptance of the trial point.}
    \IF{$s_k=0$}
    \STATE Set $\rho_k:=-\infty$ and $x_{k+1}:=x_k$, and proceed directly to Step~4 without a trial-point objective evaluation. \COMMENT{Invalid model step}
    \ENDIF\space
    Evaluate $f(\bar x)$ and define the ratio
    \begin{equation}\label{eq:rho}
      \rho_k =
      \begin{cases}
        \dfrac{f(x_k) - f(\bar x)}{\,l\,\|s_k\|^{2}\,}, & \text{if } \tilde\sigma_k=0, \\[2mm]
        \dfrac{f(x_k) - f(\bar x)}{\,f(x_k) -  m_{f,x_k}(\bar x;\tilde\sigma_k)\,}, & \text{if } \tilde\sigma_k>0.
      \end{cases}
    \end{equation}
    If $\rho_k \ge \eta$ (a \textbf{successful} iteration), set $x_{k+1}:=\bar x$; otherwise set $x_{k+1}:=x_k$.
    \smallskip
    \STATE \textbf{Step 4: Regularization parameter update.}
    \begin{equation*}
      \sigma_{k+1} =
      \begin{cases}
        0 & \text{if } \rho_k \ge \eta \quad \text{(\textbf{success})}, \\
        \mathcal{U}(\sigma_k, \tilde\sigma_k) & \text{if } \rho_k < \eta \quad \text{(\textbf{failure})},
      \end{cases}
    \end{equation*}
    where $\mathcal{U}(\cdot)$ denotes the strategy-specific update rule defined in \cref{alg:Strategy_Simple} or \cref{alg:Strategy_Heuristic}. Increment $k\leftarrow k+1$ and go to Step~1.
  \end{algorithmic}
\end{algorithm}

At every nonterminal iteration, $\nabla f(x_k)\ne 0$ and
$\nabla_x m_{f,x_k}(x_k;\tilde\sigma)=\nabla f(x_k)$. Hence an exact local minimizer of the model cannot have $s_k=0$, so the zero step unambiguously identifies an invalid Simple trial.

\subsection*{Instantiating the Strategy}

\cref{alg:ALMTON_Meta} serves as a template for two distinct strategies:
a \textit{Simple} version and a \textit{Heuristic} version.
The global convergence and worst-case evaluation-complexity analysis below
is for the Heuristic strategy under the stated exact local-minimizer oracle
convention.  Simple is retained as the one-logical-SDP-per-outer-iteration
variant and is compared empirically; no separate total-outer-iteration
complexity theorem for Simple is claimed here.
The practical performance of these strategies is contrasted in
\cref{sec:numerical_experiments}.

\paragraph{Strategy 1: The Simple Variant}
\cref{alg:Strategy_Simple} adopts a minimalist approach.
Upon an unsuccessful iteration, it triggers an exponential increase in~$\sigma_k$ at the outer level, and the new value is used in the next model phase.

\begin{algorithm}[h!]
  \begin{algorithmic}[1]
    \caption{Strategy: ALMTON-Simple}
    \label{alg:Strategy_Simple}
    \STATE \textbf{Mechanism: Step Calculation.}
    \STATE Set $\tilde\sigma \leftarrow \sigma_k$. Attempt to compute a local minimizer $\bar x$ of $m_{f,x_k}(\cdot;\tilde\sigma)$.
    \IF{minimizer $\bar x$ exists \textbf{and} $\lambda_{\min}(A_k(\tilde\sigma)) \ge c$}
    \STATE Set $s_k := \bar x - x_k$ and $\tilde\sigma_k := \tilde\sigma$.
    \ELSE
    \STATE Set $s_k := 0$ and $\tilde\sigma_k := \tilde\sigma$. \COMMENT{Step~3 explicitly sets $\rho_k=-\infty$}
    \ENDIF
    \smallskip
    \STATE \textbf{Mechanism: Parameter Update $\mathcal{U}(\sigma_k, \tilde\sigma_k)$.}
    \begin{equation*}
      \mathcal{U}(\sigma_k, \tilde\sigma_k) =
      \begin{cases}
        \max\{1, \alpha_{LM}(x_k)\}, & \text{if } \sigma_k = 0, \\
        \gamma \sigma_k, & \text{if } \sigma_k > 0.
      \end{cases}
    \end{equation*}
  \end{algorithmic}
\end{algorithm}

The Simple strategy constructs at most one logical \gls{sdp} subproblem per iteration and delegates regularization growth entirely to the outer loop. Multiple physical \verb|MOSEK| invocations may occur only as numerically equivalent scaling or precision attempts for that same subproblem.

\paragraph{Strategy 2: The Heuristic Variant}
The Heuristic strategy is motivated by the high computational cost of solving the \gls{sdp} subproblem.
Instead of rigidly rejecting any step failing the curvature condition, it uses an inner correction loop that actively searches for a valid regularization parameter~$\tilde\sigma_k$ guaranteeing that the model is well-posed \emph{before} evaluating the objective function.
The spectral correction uses $\lambda_k(\tilde\sigma):=\lambda_{\min}(A_k(\tilde\sigma))$ and the term $\tilde\sigma+(c-\lambda_k(\tilde\sigma))_+$; the full correction is conservative because increasing $\tilde\sigma$ by $\delta$ shifts $A_k$ by $2\delta I_n$.

\begin{algorithm}[h!]
  \begin{algorithmic}[1]
    \caption{Strategy: ALMTON-Heuristic}
    \label{alg:Strategy_Heuristic}
    \STATE \textbf{Mechanism: Step Calculation.}
    \STATE Set $\tilde\sigma \leftarrow \sigma_k$.
    \REPEAT
    \STATE Attempt to compute a local minimizer $\bar x$ of $m_{f,x_k}(\cdot;\tilde\sigma)$.
    \IF{no such minimizer exists}
    \STATE Increase $\tilde\sigma \leftarrow \max\!\big\{\alpha_{LM}(x_k),\,\gamma\max\{1,\tilde\sigma\}\big\}$ and \textbf{continue}. \COMMENT{No spectral correction}
    \ENDIF
    \STATE Let $\lambda_k(\tilde\sigma) := \lambda_{\min}(A_k(\tilde\sigma))$.
    \IF{$\lambda_k(\tilde\sigma) \ge c$ \textbf{and} $m_{f,x_k}(\bar x;\tilde\sigma)\le f(x_k)$}
    \STATE \textbf{break} \COMMENT{Valid step found}
    \ENDIF
    \STATE Increase $\tilde\sigma \leftarrow \max\!\big\{\alpha_{LM}(x_k),\,\gamma\max\{1,\tilde\sigma\},\,\tilde\sigma+(c-\lambda_k(\tilde\sigma))_+\big\}$.
    \UNTIL{valid step found}
    \STATE Set $s_k := \bar x - x_k$ and $\tilde\sigma_k := \tilde\sigma$.
    \smallskip
    \STATE \textbf{Mechanism: Parameter Update $\mathcal{U}(\sigma_k, \tilde\sigma_k)$.}
    \begin{equation*}
      \mathcal{U}(\sigma_k, \tilde\sigma_k) =
      \max\!\big\{\alpha_{LM}(x_k), \, \gamma \max\{1, \tilde\sigma_k\}\big\}.
    \end{equation*}
  \end{algorithmic}
\end{algorithm}

\begin{remark}
For the Heuristic strategy (\Cref{alg:Strategy_Heuristic}), the inner loop always terminates with a trial point~$\bar x$ satisfying the curvature and model-value conditions.
However, the monotone increase of~$\tilde\sigma$ can lead to
\emph{over-regularization} and smaller steps.  Its reported phase count is not
a normalized measure of work, and the experiments therefore also report SDP
requests and wall time.
\end{remark}

\paragraph{Phase notation}
For clarity, we write~$\tilde\sigma_k$ for the value of the regularization at the end of the model phase (i.e., the value with which~$\bar x$ and~$s_k$ are computed).

\paragraph{First- and second-order conditions at the trial point}
Let $s_k:=\bar x-x_k$.  Since $\bar x$ is a local minimizer of the phase
model, the necessary conditions are
\begin{align}
  \nabla_xm_{f,x_k}(\bar x;\tilde\sigma_k)
  &=\nabla f(x_k)+A_k(\tilde\sigma_k)s_k
    +\tfrac12\nabla^3f(x_k)[s_k,s_k]=0,
    \label{eq:kkt-stationarity}\\
  \nabla_x^2m_{f,x_k}(\bar x;\tilde\sigma_k)
  &=M_k(s_k;\tilde\sigma_k)\succeq0.
    \label{eq:kkt-curvature}
\end{align}

\begin{lemma}[Exact regularized model-decrease identity]\label{lem:exact-identity}
Suppose $s_k$ satisfies \eqref{eq:kkt-stationarity}, and set
$A_k:=A_k(\tilde\sigma_k)$ and $M_k:=M_k(s_k;\tilde\sigma_k)$.  Then
\begin{equation}\label{eq:exact-identity}
  D_k:=f(x_k)-m_{f,x_k}(\bar x;\tilde\sigma_k)
  =s_k^\top\!\left(\tfrac16A_k+\tfrac13M_k\right)s_k.
\end{equation}
Consequently, if $s_k\ne0$, $A_k\succeq cI_n$, and $M_k\succeq0$, then
\begin{equation}\label{eq:lb-unreg}
  D_k\ge\frac{c}{6}\|s_k\|^2>0.
\end{equation}
\end{lemma}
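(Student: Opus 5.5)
Write $g:=\nabla f(x_k)$, $T:=\nabla^3 f(x_k)$ and $s:=s_k$. The plan is to expand $D_k$ directly from the definition \eqref{eq:taylor model with regularization} and then use the stationarity condition \eqref{eq:kkt-stationarity} to eliminate the gradient term. Since $m_{f,x_k}(x_k;\tilde\sigma_k)=f(x_k)$ and $\sigma\|s\|^2=\tfrac12 s^\top(2\sigma I_n)s$, we have
\begin{equation*}
  D_k=-g^\top s-\tfrac12 s^\top A_k s-\tfrac16 T[s,s,s].
\end{equation*}
Contracting \eqref{eq:kkt-stationarity} with $s$ gives
\begin{equation*}
  g^\top s=-s^\top A_k s-\tfrac12 T[s,s,s].
\end{equation*}
Substituting this into the expansion yields $D_k=\tfrac12 s^\top A_k s+\tfrac13 T[s,s,s]$.

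Next we rewrite the cubic term through $M_k$. By the definition \eqref{eq:A-M-definitions}, $s^\top M_k s=s^\top A_k s+T[s,s,s]$, so $T[s,s,s]=s^\top(M_k-A_k)s$. Therefore
\begin{equation*}
  D_k=\left(\tfrac12-\tfrac13\right)s^\top A_k s+\tfrac13 s^\top M_k s=s^\top\!\left(\tfrac16A_k+\tfrac13M_k\right)s,
\end{equation*}
which is \eqref{eq:exact-identity}. The only point needing care is the symmetry convention: $T[s]$ is the matrix $\sum_i s_iH_i$, so that $s^\top T[s]s=T[s,s,s]$ and $s^\top T[s,s]=T[s,s,s]$. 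Both follow from the slice-compatibility identities assumed with \eqref{eq:cubic-poly}.

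For the bound, assume $A_k\succeq cI_n$ and $M_k\succeq0$. Then $s^\top A_k s\ge c\|s\|^2$ and $s^\top M_k s\ge0$, so $D_k\ge\frac c6\|s\|^2$. This is strictly positive because $c>0$ and $s_k\ne0$, which gives \eqref{eq:lb-unreg}.

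The argument is a routine algebraic identity with no real obstacle. The only places to be careful are keeping the coefficients $\tfrac12$ and $\tfrac16$ straight, and matching the $2\sigma I_n$ in $A_k$ with the $\sigma\|s\|^2$ regularizer.
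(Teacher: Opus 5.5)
Your proposal is correct and follows essentially the same route as the paper: contract \eqref{eq:kkt-stationarity} with $s_k$ to eliminate $\nabla f(x_k)^\top s_k$, obtain $D_k=\tfrac12 s_k^\top A_k s_k+\tfrac13\nabla^3 f(x_k)[s_k,s_k,s_k]$, and regroup using $s_k^\top M_k s_k=s_k^\top A_k s_k+\nabla^3 f(x_k)[s_k,s_k,s_k]$. The lower bound then follows from $A_k\succeq cI_n$, $M_k\succeq0$, and $s_k\neq0$, exactly as you argue.
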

\begin{proof}
Write $a:=s_k^\top A_ks_k$ and
$t:=\nabla^3f(x_k)[s_k,s_k,s_k]$.  Multiplying
\eqref{eq:kkt-stationarity} by $s_k^\top$ gives
$\nabla f(x_k)^\top s_k+a+t/2=0$.  Hence
\[
  D_k=-\nabla f(x_k)^\top s_k-\tfrac12a-\tfrac16t
     =\tfrac16a+\tfrac13(a+t),
\]
which is \eqref{eq:exact-identity}, because
$a+t=s_k^\top M_ks_k$.  The lower bound follows immediately.
\end{proof}

\subsection*{Model-phase invariants and basic descent}

\begin{proposition}[Finite model phase and its invariants]\label{prop:model-phase}
Under the exact local-minimizer oracle convention that an attempt returns a
local minimizer whenever one exists and declares failure only when none
exists, Step~2 of the Heuristic strategy terminates after finitely many inner
updates at every nonterminal iteration and returns
$(\bar x,\tilde\sigma_k)$ such that, with $s_k:=\bar x-x_k$,
\begin{enumerate}\itemsep0.2em
  \item[\textup{(i)}] $\bar x$ is a strict local minimizer of
    $m_{f,x_k}(\cdot;\tilde\sigma_k)$;
  \item[\textup{(ii)}] $A_k(\tilde\sigma_k)\succeq cI_n$;
  \item[\textup{(iii)}]
    $f(x_k)-m_{f,x_k}(\bar x;\tilde\sigma_k)
      \ge(c/6)\|s_k\|^2>0$.
\end{enumerate}
\end{proposition}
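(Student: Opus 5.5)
The plan is to show that the inner loop of the Heuristic strategy can fail its test only finitely often, and then to read off (i)--(iii) from the exit condition together with \cref{lem:exact-identity}.

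\textbf{Step 1: monotone growth of $\tilde\sigma$.} Each inner update, in either of the two branches, sets $\tilde\sigma$ to a value at least $\gamma\max\{1,\tilde\sigma\}$. After the first update we therefore have $\tilde\sigma\ge\gamma>0$. After $j$ updates, $\tilde\sigma\ge\gamma^j$. Since $\gamma>1$, $\tilde\sigma$ exceeds any prescribed finite threshold after finitely many updates. In addition, every update makes $\tilde\sigma\ge\alpha_{LM}(x_k)$.

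\textbf{Step 2: exhibit a threshold beyond which the test passes.} I take
\[
\sigma^\ast:=\max\{\alpha_{LM}(x_k),\,1,\,c/2-\min\{0,\lambda_{\min}(\nabla^2f(x_k))\}\}.
\]
Suppose $\tilde\sigma\ge\sigma^\ast$, so in particular $\tilde\sigma>0$. Then \cref{lem:positive-alpha-backstop} guarantees that $m_{f,x_k}(\cdot;\tilde\sigma)$ has a strict local minimizer. By the oracle convention, the attempt does not report failure and returns a local minimizer $\bar x$. Moreover,
\[
\lambda_k(\tilde\sigma)=\lambda_{\min}(\nabla^2f(x_k))+2\tilde\sigma\ge c.
\]
For the model-value test, the necessary conditions \eqref{eq:kkt-stationarity}--\eqref{eq:kkt-curvature} hold at $\bar x$, so \cref{lem:exact-identity} gives $D\ge (c/6)\|s\|^2\ge0$. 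Hence $m_{f,x_k}(\bar x;\tilde\sigma)\le f(x_k)$, and the loop breaks. Combined with Step~1, this shows the loop terminates after finitely many updates. Termination can of course happen earlier, at some $\tilde\sigma<\sigma^\ast$.

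\textbf{Step 3: the properties at exit.} Let $\tilde\sigma_k$ be the value at exit. Property (ii) is exactly the break condition. For (iii), note that at a nonterminal iteration $\nabla f(x_k)\neq0$, so $s_k\ne0$ (as remarked after \cref{alg:ALMTON_Meta}). Then \cref{lem:exact-identity}, applied with (ii) and \eqref{eq:kkt-curvature}, gives (iii).

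\textbf{Main obstacle: property (i).} The returned $\bar x$ is only guaranteed to be a local minimizer, while (i) asks for a strict one. I would argue as follows. The cubic model $m:=m_{f,x_k}(\cdot;\tilde\sigma_k)$ has either no local minimizer, exactly one strict local minimizer, or infinitely many non-strict ones (Ahmadi--Zhang). So it suffices to rule out non-strict local minimizers. I would try to show that $\nabla^2 m(\bar x)=M_k(s_k;\tilde\sigma_k)\succ0$, since by the cited characterization this makes $\bar x$ strict.

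Suppose instead that $M_k v=0$ for some $v\ne0$. In the non-strict case the local minimizers form an affine set along which $m$ is constant, and the cubic form must vanish appropriately along that direction. The delicate part is extracting a contradiction from $A_k\succeq cI_n$ together with the trichotomy. I would first try a one-dimensional argument: restrict $m$ to the line $\bar x+tv$. Along this line the restriction has zero derivative at $t=0$, zero second derivative at $t=0$, and must be locally nonnegative relative to $m(\bar x)$. This forces the cubic coefficient to vanish, so the restriction is constant. Consequently the line from $\bar x$ in direction $v$ consists of local minimizers, and I would then push along it to reach a point, such as the point where the line passes nearest $x_k$, at which the model Hessian is $\succeq cI_n$, contradicting singularity there.

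If this argument does not close, the fallback is to read (i) as the oracle returning the SDP solution of \eqref{eq:sdp}, which by construction is the strict minimizer whenever one exists. Step~2 shows that such a minimizer exists once $\tilde\sigma\ge\sigma^\ast$.
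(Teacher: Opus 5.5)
Your Steps~1--3 are sound and coincide with the paper's argument for finite termination and for (ii)--(iii). The gap is in (i). Write $T:=\nabla^3f(x_k)$ and let $v\in\ker M_k$, $v\neq0$. Restricting $m$ to the line only yields $T[v,v,v]=0$, i.e.\ $m$ is constant on $\bar x+\mathbb R v$. That cannot contradict the curvature floor. Since $v^\top A_kv=v^\top M_kv-T[v,v,s_k]$, what you actually need is $T[v,v,s_k]=0$, a third derivative in a direction \emph{off} the line.

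Your proposed continuation does not supply this, for three reasons. First, a constant restriction does not make the line consist of local minimizers of $m$ on $\mathbb R^n$. For $x_1^2(1+x_2)$ at the origin with $v=e_2$, the points with $x_2<-1$ are not local minimizers. Second, the only point where the model Hessian is known to dominate $cI_n$ is $x_k$ itself, which generally does not lie on the line. Third, along the whole line $v^\top\nabla^2m(p)v$ is constant, equal to $v^\top A_kv+T[v,v,s_k]$. So moving along the line produces no new information; whether this quantity can be positive is exactly the open question.

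The missing idea is a second-order test along a \emph{curved} path, as in the paper. Take $y=tz+at^2w$ with $M_kz=0$ and $T[z,z,z]=0$. The shifted cubic $\tfrac12y^\top M_ky+\tfrac16T[y,y,y]$ then equals $\tfrac{t^4}{2}\bigl(a^2w^\top M_kw+aT[z,z,w]\bigr)+O(t^5)$. Choosing $a$ small with sign opposite to $T[z,z,w]$ makes this negative, so local minimality forces $T[z,z,w]=0$ for every $w$. Taking $w=s_k$ gives $z^\top A_kz=0$, contradicting $A_k\succeq cI_n$. Hence $M_k\succ0$ and $\bar x$ is strict.

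Your fallback does not close the gap either. It replaces the stated oracle convention, which only promises \emph{some} local minimizer. Moreover, Step~2 guarantees that a strict minimizer exists only for $\tilde\sigma\ge\sigma^\ast$, whereas the loop may exit at a smaller $\tilde\sigma$. At such an exit, strictness must be derived from $A_k(\tilde\sigma)\succeq cI_n$ by exactly the argument above.
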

\begin{proof}
Whenever the phase does not exit, its update satisfies
$\tilde\sigma^+\ge\gamma\max\{1,\tilde\sigma\}>\tilde\sigma$.
Thus, after finitely many updates, $\tilde\sigma$ is positive, is at least
$\alpha_{LM}(x_k)$, and is large enough that
$A_k(\tilde\sigma)\succeq cI_n$.  By
\cref{lem:positive-alpha-backstop}, a strict local minimizer then exists.
At any local minimizer, \eqref{eq:kkt-curvature} gives $M_k\succeq0$;
\Cref{lem:exact-identity} then makes the model-value test strict, so all exit
tests pass.  This proves finite termination and (ii)--(iii).

For completeness, the curvature floor also upgrades any local minimizer found
at an earlier candidate to a strict one.  If a nonzero $z$ belonged to
$\ker M_k$, local minimality of the shifted cubic
\[
  y\longmapsto \tfrac12y^\top M_ky
     +\tfrac16\nabla^3f(x_k)[y,y,y]
\]
would imply $\nabla^3f(x_k)[z,z,w]=0$ for every $w$: substitute
$y=tz$ first and use both signs of $t$ to get
$\nabla^3f(x_k)[z,z,z]=0$.  Next, for $y=tz+a t^2w$,
\[
  \tfrac12y^\top M_ky+\tfrac16\nabla^3f(x_k)[y,y,y]
  =\frac{t^4}{2}\!\left(a^2w^\top M_kw
       +a\nabla^3f(x_k)[z,z,w]\right)+O(t^5).
\]
For each fixed $a$, local minimality requires the coefficient in parentheses
to be nonnegative.  If $\nabla^3f(x_k)[z,z,w]\ne0$, choosing $a$ with the
opposite sign and sufficiently small magnitude makes it negative, a
contradiction.  Taking $w=s_k$ therefore gives
$z^\top A_kz=z^\top M_kz-\nabla^3f(x_k)[z,z,s_k]=0$, contradicting
$A_k\succeq cI_n$.  Hence $M_k\succ0$, and the local minimizer is strict.
Finally, a nonterminal iteration has $\nabla f(x_k)\ne0$, so
\eqref{eq:kkt-stationarity} excludes $s_k=0$.  This proves (i) and the strict
inequality in (iii).
\end{proof}

\begin{corollary}[Uniform quadratic model decrease]\label{cor:unreg-l}
Every valid phase step satisfying the curvature floor obeys
\[
  f(x_k)-m_{f,x_k}(\bar x;\tilde\sigma_k)
  \ge\frac{c}{6}\|s_k\|^2\ge l\|s_k\|^2.
\]
In particular, this holds for the unregularized phase $\tilde\sigma_k=0$.
\end{corollary}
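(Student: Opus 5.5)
The corollary follows by combining \Cref{lem:exact-identity} with the parameter choice $l\in(0,c/6]$ from Step~0 of \cref{alg:ALMTON_Meta}.

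Fix a valid phase step, meaning that $\bar x$ is a local minimizer of $m_{f,x_k}(\cdot;\tilde\sigma_k)$ returned by the model phase with $A_k(\tilde\sigma_k)\succeq cI_n$. The argument then runs as follows.

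\begin{enumerate}
  \item Since $\bar x$ is a local minimizer of the phase model, the first-order condition \eqref{eq:kkt-stationarity} holds. The identity \eqref{eq:exact-identity} therefore applies and gives
  \[
    D_k=s_k^\top\!\left(\tfrac16A_k+\tfrac13M_k\right)s_k .
  \]
  \item The second-order necessary condition \eqref{eq:kkt-curvature} gives $M_k\succeq0$, so the $M_k$ term is nonnegative.
  \item The curvature floor gives $s_k^\top A_ks_k\ge c\|s_k\|^2$.
  \item Together these yield $D_k\ge (c/6)\|s_k\|^2$. This holds even if $s_k=0$, in which case both sides vanish.
  \item Step~0 requires $l\le c/6$, so $(c/6)\|s_k\|^2\ge l\|s_k\|^2$.
\end{enumerate}

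For the Heuristic strategy, \cref{prop:model-phase}(ii)--(iii) already supplies the first inequality directly.

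For the particular case $\tilde\sigma_k=0$, note that $A_k(0)=\nabla^2f(x_k)$. A step is only produced in the unregularized phase when $\lambda_{\min}(\nabla^2 f(x_k))\ge c$. This is the test in the Simple strategy; in the Heuristic strategy it is the break condition. So the hypotheses above hold with $m_{f,x_k}(\cdot;0)=\taylor{3}{x_k}$, and the same chain applies verbatim. This gives $f(x_k)-\taylor{3}{x_k}(\bar x)\ge l\|s_k\|^2$, which is exactly the quantity that makes the denominator $l\|s_k\|^2$ in \eqref{eq:rho} a valid lower bound on the model decrease.

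There is no real obstacle. The only point needing care is to check that ``valid phase step'' indeed entails both the stationarity condition (so that \cref{lem:exact-identity} applies) and $M_k\succeq0$. Both follow from local minimality of $\bar x$ for the phase model, whichever strategy produced it.
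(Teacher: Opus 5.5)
Your proposal is correct and follows the paper's proof exactly. The paper likewise combines the identity of \Cref{lem:exact-identity} with $M_k\succeq0$ from \eqref{eq:kkt-curvature}, the floor $A_k(\tilde\sigma_k)\succeq cI_n$, and $l\le c/6$; your remarks on the $\tilde\sigma_k=0$ case and the trivial $s_k=0$ case are harmless elaborations of the same argument.
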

\begin{proof}
Apply \Cref{lem:exact-identity}, \eqref{eq:kkt-curvature}, and
$A_k(\tilde\sigma_k)\succeq cI_n$, and use $l\le c/6$.
\end{proof}

\subsection*{Acceptance ratio: justification and calibration}
When $\tilde\sigma_k>0$, the denominator~$f(x_k)-m_{f,x_k}(\bar x;\tilde\sigma_k)$ in \cref{eq:rho} is the standard trust-region/ARC predicted decrease.
For both regularized and unregularized valid steps, \Cref{cor:unreg-l} gives
$f(x_k)-m_{f,x_k}(\bar x;\tilde\sigma_k)\ge(c/6)\|s_k\|^2$.
We therefore fix $l\in(0,c/6]$ (e.g., $l=c/10$).  In the unregularized
ratio, $l\|s_k\|^2$ is a conservative acceptance scale; it is not asserted to
equal the full predicted decrease.

\begin{lemma}[Descent per successful iteration]\label{lem:descent}
  Let $k$ be such that $\rho_k\ge \eta$ and $x_{k+1}=\bar x$.
  Then:
\begin{enumerate}\itemsep0.2em
    \item[\textup{(a)}] If $\tilde\sigma_k=0$, then $f(x_{k+1}) \le f(x_k) - \eta\,l\,\|s_k\|^2$.
    \item[\textup{(b)}] If $\tilde\sigma_k>0$, then $f(x_{k+1}) \le f(x_k) - \eta\big(f(x_k)-m_{f,x_k}(\bar x;\tilde\sigma_k)\big)\le f(x_k)-\eta l\|s_k\|^2$.
  \end{enumerate}
  In particular, $\{f(x_k)\}$ is nonincreasing and strictly decreases on every successful iteration.
\end{lemma}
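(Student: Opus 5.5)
The proof will unwind the acceptance test \eqref{eq:rho} directly, case by case on $\tilde\sigma_k$, and then feed in the model-decrease bound from \Cref{cor:unreg-l}. A successful iteration has $\rho_k\ge\eta$. Since $\rho_k=-\infty$ when $s_k=0$, success forces $s_k\ne0$, so every ratio denominator below is nonzero.

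\emph{Case (a), $\tilde\sigma_k=0$.} Here $\rho_k=(f(x_k)-f(\bar x))/(l\|s_k\|^2)$ with $l>0$ and $\|s_k\|>0$, so the denominator is positive. Multiplying $\rho_k\ge\eta$ through by it gives $f(x_k)-f(x_{k+1})\ge\eta l\|s_k\|^2$, which is the claim.

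\emph{Case (b), $\tilde\sigma_k>0$.} The denominator is $D_k=f(x_k)-m_{f,x_k}(\bar x;\tilde\sigma_k)$. I first need $D_k>0$ so that the inequality keeps its direction when I multiply through. For the Heuristic strategy, \cref{prop:model-phase}(iii) gives $D_k\ge(c/6)\|s_k\|^2>0$ directly. For Simple, a nonzero step is returned only when $\lambda_{\min}(A_k(\tilde\sigma_k))\ge c$ and $\bar x$ is a local minimizer. Then \eqref{eq:kkt-curvature} gives $M_k\succeq0$, and \Cref{lem:exact-identity} yields the same bound; this is exactly \Cref{cor:unreg-l}. Multiplying $\rho_k\ge\eta$ by $D_k>0$ gives the first inequality. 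The second follows from $D_k\ge(c/6)\|s_k\|^2\ge l\|s_k\|^2$, using $l\le c/6$ and $\eta>0$.

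\emph{Monotonicity.} On an unsuccessful iteration $x_{k+1}=x_k$, so the function value is unchanged. On a successful one, (a) or (b) together with $\eta l\|s_k\|^2>0$ gives strict decrease. Hence $\{f(x_k)\}$ is nonincreasing and strictly decreases on every success.

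The only delicate point is the positivity of the denominator in case (b). Without it, a nonpositive predicted decrease could flip the inequality. The argument above secures it in both strategies: in Simple through the curvature test that gates a nonzero step, and in Heuristic through the model phase guarantees of \cref{prop:model-phase}.
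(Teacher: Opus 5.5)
Your proof is correct and matches the paper's argument: you unwind the ratio test \eqref{eq:rho} case by case on $\tilde\sigma_k$, invoke \Cref{cor:unreg-l} for the second inequality in (b), and note that a failed iteration leaves the iterate unchanged. Your explicit checks that success forces $s_k\ne0$ and that $D_k>0$ before multiplying through are a useful tightening of steps the paper leaves implicit.
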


\begin{proof}
  By the definition of $\rho_k$ in \cref{eq:rho} and $\rho_k\ge\eta$,
  \[
    f(x_k)-f(x_{k+1})
    =
    \begin{cases}
      \rho_k\,l\,\|s_k\|^2, & \tilde\sigma_k=0,\\[1mm]
      \rho_k\,\big(f(x_k)-m_{f,x_k}(\bar x;\tilde\sigma_k)\big), & \tilde\sigma_k>0,
    \end{cases}
  \]
  which yields (a), and the first inequality in (b).  The second inequality in
  (b) follows from \Cref{cor:unreg-l}.  Thus every successful iteration obeys
  \begin{equation}\label{eq:unified-success-decrease}
    f(x_k)-f(x_{k+1})\ge\eta l\|s_k\|^2>0.
  \end{equation}
  Monotonicity follows, since a failed iteration leaves the iterate unchanged.
\end{proof}

\begin{remark}[What counts as ``successful'' and how feasibility is checked]
An iteration is \emph{successful} only if Step~3 is reached with a bona fide trial point $\bar x$ (hence $s_k=\bar x-x_k\neq 0$ unless already converged) and the mixed ratio test \cref{eq:rho} yields $\rho_k\ge\eta$.
The convergence theory uses an exact local-minimizer oracle: the returned point
satisfies \eqref{eq:kkt-stationarity}--\eqref{eq:kkt-curvature} exactly, and
the oracle declares failure if and only if no local minimizer exists.  The
numerical implementation approximates this oracle with the \gls{sdp} in
\cref{eq:sdp}; a separate inexact-oracle analysis would be needed to propagate
solver residuals through the stated constants.
No ratio~$\rho_k$ is formed until the invariants in \Cref{prop:model-phase} are satisfied (for the Heuristic strategy).
\end{remark}

\begin{remark}[On the roles of $c$, $l$, and $\alpha_{\!LM}$]
The curvature floor $c>0$ is imposed on the base model Hessian
$A_k(\tilde\sigma_k)$.  Together with local minimality, it yields the uniform
model-decrease bound in \Cref{cor:unreg-l}; hence any fixed
$l\in(0,c/6]$ is valid without a second spectral condition.  The LM backstop
$\alpha_{LM}(x_k)$ guarantees strict local-minimizer existence when combined
with the positive safeguard in \cref{lem:positive-alpha-backstop}.
\end{remark}

\begin{remark}[Resetting to unregularized after success]
  After any successful iteration ($\rho_k\ge\eta$), we set $\sigma_{k+1}=0$ to re-test the unregularized cubic at the next iterate.
  This is how ALMTON recovers fast local behavior whenever the cubic admits a
  well-conditioned local minimizer.  If it is ill posed, Heuristic increases
  $\sigma$ within the model phase until the backstops in
  \cref{prop:model-phase} hold, whereas Simple updates $\sigma$ after a failed
  outer iteration.
\end{remark}

\begin{figure}[tb]
  \centering
  \hspace*{-0.1\textwidth}%
  \resizebox{1.1\textwidth}{!}{
    \usetikzlibrary{shapes.geometric, arrows.meta, positioning, calc, shadows}

\definecolor{almton_blue}{RGB}{220, 230, 245}
\definecolor{almton_red}{RGB}{250, 220, 220}
\definecolor{almton_green}{RGB}{220, 245, 220}
\definecolor{almton_brown}{RGB}{245, 235, 210}
\definecolor{almton_purple}{RGB}{230, 220, 245}
\definecolor{arrow_blue}{RGB}{16, 39, 130}

\begin{tikzpicture}[
    node distance=1.8cm and 1.2cm,
    font=\small\sffamily,
    process/.style={
        rectangle,
        minimum width=3cm,
        minimum height=1.8cm,
        text centered,
        draw=black!60,
        thick,
        rounded corners=5pt,
        drop shadow,
        align=center
    },
    decision/.style={
        diamond,
        aspect=2,
        minimum width=2.5cm,
        minimum height=1.2cm,
        text centered,
        draw=black!60,
        thick,
        fill=white,
        drop shadow
    },
    line/.style={
        draw,
        arrow_blue,
        line width=1.2pt,
        -{Latex[length=3mm]}
    }
]

    \node (step0) [process, fill=almton_blue] {
        \textbf{Step 0: Initialization}\\
        Given $f, x_0, \epsilon$\\
        Set $\sigma_0=0, k=0$
    };

    \node (step1) [process, fill=almton_red, right=of step0] {
        \textbf{Step 1: Termination}\\
        Check $\|\nabla f(x_k)\| \le \epsilon$\\
        Compute $\nabla f, \nabla^2 f, \nabla^3 f$
    };

    \node (terminate) [process, fill=gray!20, above=of step1, minimum height=1cm, yshift=-0.5cm] {
        \textbf{Terminate}\\
        Return $x_\epsilon = x_k$
    };

    \node (step2) [process, fill=almton_green, right=of step1] {
        \textbf{Step 2: Model Phase}\\
        Execute Strategy $\mathcal{U}$\\
        (Simple / Heuristic)\\
        \textit{Output:} $s_k, \tilde{\sigma}_k$
    };

    \node (step3) [process, fill=almton_brown, right=of step2] {
        \textbf{Step 3: Acceptance}\\
        Compute ratio $\rho_k$\\
        \textbf{If} $\rho_k \ge \eta$: $x_{k+1} \leftarrow \bar{x}$\\
        \textbf{Else}: $x_{k+1} \leftarrow x_k$
    };

    \node (step4) [process, fill=almton_purple, right=of step3] {
        \textbf{Step 4: Update $\sigma$}\\
        \textbf{If Success:} $\sigma_{k+1} \leftarrow 0$\\
        \textbf{If Failure:} $\sigma_{k+1} \leftarrow \mathcal{U}(\sigma_k, \tilde{\sigma}_k)$\\
        Set $k \leftarrow k+1$
    };

    \draw [line] (step0) -- (step1);

    \draw [line] (step1.north) -- node[midway, left, font=\scriptsize] {Yes} (terminate.south);

    \draw [line] (step1) -- node[midway, above, font=\scriptsize] {No} (step2);

    \draw [line] (step2) -- (step3);

    \draw [line] (step3) -- (step4);

    \draw [line] (step4.south) |- ++(0, -0.8cm) -| (step1.south);

\end{tikzpicture}
  }
  \caption{\textbf{Flowchart of ALMTON-Heuristic.} The model phase (Step~2) escalates $\sigma$ only as needed to enforce the invariants in Proposition~\ref{prop:model-phase}.
    Step~3 applies the mixed ratio test \eqref{eq:rho}.
  Step~4 resets $\sigma$ after success, or increases it after failure, before the next iteration.}
  \label{fig:almton_flowchart}
\end{figure}

\section{Convergence Rate of \gls{almton}}\label{sec:convergence}
We organize the global convergence analysis of the ALMTON-Heuristic (\Cref{alg:ALMTON_Meta} with \Cref{alg:Strategy_Heuristic}) into four components.
Our approach follows the standard paradigm for adaptive regularization methods~\cite{cartis2011adaptive2,birgin2017worst,cartis2020concise,zhu2024global,cartis2025cubic}.

Throughout this section,~$\tilde\sigma_k$ denotes the \emph{phase-closing} value from Step~2 (the value with which $\bar x$ and $s_k$ are obtained), while $\sigma_{k+1}$ denotes the outer update in Step~4.

\begin{itemize}[leftmargin=1.2em]
  \item[] \textbf{Part 1. Monotonicity of the algorithm:} the sequence $\{f(x_k)\}$ is \emph{monotone nonincreasing}.
  See \Cref{lem:descent} (\Cref{sec:algorithm}) and \Cref{theorem:monotonicity} below (which also recalls \Cref{lemma:condition} and \Cref{lemma:gap}).
    \item[]\textbf{Part 2. Uniform upper bound on the regularization:} there exists~$\tilde\sigma_{\max}<\infty$ such that~$\tilde\sigma_k\le \tilde\sigma_{\max}$ for all $k$ (and hence $\sigma_k\le \tilde\sigma_{\max}$ as well).
    See \Cref{lemma:upper-bound-s_k} and \Cref{lemma:sigma-bound}.
  \item[] \textbf{Part 3. Uniform lower bound on the step size on success:} every successful iteration achieves a step of size at least a problem-dependent threshold, yielding a guaranteed decrease. See \Cref{lemma:lower-bound-s_k}.
  \item[] \textbf{Part 4. A bound on unsuccessful iterations:} the number of unsuccessful iterations is controlled by the number of successful ones, via a batching argument. See \Cref{lemma:iteration-bound} and \Cref{theorem:complexity}.
\end{itemize}

Combining monotonicity with the lower bound~$f_{\mathrm{low}}$ and the bounds from Parts~2-4, \Cref{theorem:complexity} establishes global convergence and an evaluation complexity bound.

We start by making explicit the model bound enforced by the model phase.
\begin{lemma}[Model value bound returned by the model phase]
  \label{lemma:condition}
  At every nonterminal ALMTON-Heuristic iteration, the model phase returns
  $(\bar x,\tilde\sigma_k)$ with $s_k:=\bar x-x_k$ such that
  \begin{equation}\label{eq:condition1}
    m_{f,x_k}(\bar x;\tilde\sigma_k)\;=\;\Phi^{3}_{f,x_k}(\bar x)\;+\;\tilde\sigma_k \|s_k\|^2
    \;\le\; f(x_k)-\frac{c}{6}\|s_k\|^2.
  \end{equation}
\end{lemma}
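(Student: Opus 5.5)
This is essentially a restatement of \Cref{prop:model-phase}(iii). The plan is to derive it directly from that proposition together with the definition of the model in \cref{eq:taylor model with regularization}.

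First, the equality in \eqref{eq:condition1} is definitional. By \cref{eq:taylor model with regularization} evaluated at $x=\bar x$, with $s_k=\bar x-x_k$, we have $m_{f,x_k}(\bar x;\tilde\sigma_k)=\Phi^3_{f,x_k}(\bar x)+\tilde\sigma_k\|s_k\|^2$, and nothing needs to be proved.

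Second, for the inequality, we invoke \Cref{prop:model-phase}. Under the exact local-minimizer oracle convention, at every nonterminal iteration the Heuristic model phase terminates finitely. It returns a strict local minimizer $\bar x$ of $m_{f,x_k}(\cdot;\tilde\sigma_k)$ with $A_k(\tilde\sigma_k)\succeq cI_n$. Local minimality yields \eqref{eq:kkt-stationarity} and \eqref{eq:kkt-curvature}, that is, $M_k(s_k;\tilde\sigma_k)\succeq0$. \Cref{lem:exact-identity} then gives
\[
f(x_k)-m_{f,x_k}(\bar x;\tilde\sigma_k)=s_k^\top\!\left(\tfrac16A_k+\tfrac13M_k\right)s_k\ge\tfrac{c}{6}\|s_k\|^2,
\]
because the $M_k$ term is nonnegative and the $A_k$ term is at least $c\|s_k\|^2/6$. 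Rearranging gives \eqref{eq:condition1}.

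The main point needing care is that the returned pair really satisfies both invariants simultaneously. This includes the case where the exit happens at an early candidate, possibly with $\tilde\sigma=0$ (the unregularized step), rather than via the backstop. I would handle it by noting that the exit test in \Cref{alg:Strategy_Heuristic} checks $\lambda_{\min}(A_k(\tilde\sigma))\ge c$ explicitly at whichever candidate it exits on. Curvature $M_k\succeq0$ then holds at any local minimizer returned by the oracle, so \Cref{lem:exact-identity} applies uniformly regardless of which inner update produced $\tilde\sigma_k$. The strict positivity ($s_k\neq0$) is not needed here, but it follows from \Cref{prop:model-phase} if desired.
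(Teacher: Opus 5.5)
Your proposal is correct and takes the paper's route: the paper proves this lemma simply by citing \Cref{prop:model-phase}\textup{(iii)}. Your unpacking (the equality by definition, then the curvature floor checked at the exit candidate, $M_k(s_k;\tilde\sigma_k)\succeq0$ from local minimality, and \Cref{lem:exact-identity}) is exactly how that item is established inside the proposition's proof.
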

\begin{proof}
  This is \cref{prop:model-phase}\textup{(iii)}.
\end{proof}

We next record the immediate consequence for the cubic Taylor gap.

\begin{corollary}\label{lemma:gap}
  For every $k\ge 0$,
  \[
    \Phi^{3}_{f,x_k}(x_k)\;-\;\Phi^{3}_{f,x_k}(\bar x)
    \;\ge\;\left(\tilde\sigma_k+\frac{c}{6}\right)\|s_k\|^{2}.
  \]
\end{corollary}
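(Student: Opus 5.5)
This is a one-line rearrangement of \cref{lemma:condition}, which I take as given at every nonterminal ALMTON-Heuristic iteration. The plan is to write out \eqref{eq:condition1} explicitly, use $\Phi^{3}_{f,x_k}(x_k)=f(x_k)$, and move the regularization term to the other side.

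The steps, in order, are as follows.

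\emph{Step 1.} Evaluate the Taylor model \eqref{eq:taylor model} at $x=x_k$. Every term with $j\ge1$ vanishes, so $\Phi^{3}_{f,x_k}(x_k)=f(x_k)$.

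\emph{Step 2.} Insert the definition \eqref{eq:taylor model with regularization} into \eqref{eq:condition1}, with $s_k=\bar x-x_k$. This gives
\[
  \Phi^{3}_{f,x_k}(\bar x)+\tilde\sigma_k\|s_k\|^2\le f(x_k)-\tfrac{c}{6}\|s_k\|^2 .
\]

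\emph{Step 3.} Subtract $\Phi^{3}_{f,x_k}(\bar x)$ from both sides, add $\tfrac{c}{6}\|s_k\|^2$ to both sides, and replace $f(x_k)$ by $\Phi^{3}_{f,x_k}(x_k)$ using Step 1. The result is
\[
  \Phi^{3}_{f,x_k}(x_k)-\Phi^{3}_{f,x_k}(\bar x)\ge\left(\tilde\sigma_k+\tfrac{c}{6}\right)\|s_k\|^2 ,
\]
which is the claim.

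The only point needing care is the quantifier ``for every $k\ge0$''. The model phase, and hence $\bar x$, $s_k$ and $\tilde\sigma_k$, is defined only at nonterminal iterations; I read the statement as ranging over those. The inequality does not depend on whether iteration $k$ is later accepted, since it involves only the model-phase output. \Cref{prop:model-phase} guarantees that this output exists under the exact local-minimizer oracle convention, so no additional case analysis is required.
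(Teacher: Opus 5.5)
Your proof is correct and is essentially the paper's argument. The paper likewise expands $m_{f,x_k}(x;\tilde\sigma_k)=\Phi^{3}_{f,x_k}(x)+\tilde\sigma_k\|x-x_k\|^2$ at $x_k$ and $\bar x$ and combines the difference with \eqref{eq:condition1}; reading ``for every $k\ge0$'' as ranging over nonterminal iterations, where \cref{prop:model-phase} supplies the model-phase output, is the right interpretation.
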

\begin{proof}
  From \cref{eq:taylor model with regularization},
  \(
    m_{f,x_k}(x;\tilde\sigma_k)=\Phi^{3}_{f,x_k}(x)+\tilde\sigma_k\|x-x_k\|^2
  \).
  Evaluating at $x=x_k$ and $x=\bar x$ and subtracting, we get
  \[
    m_{f,x_k}(x_k;\tilde\sigma_k)-m_{f,x_k}(\bar x;\tilde\sigma_k)
    =\Phi^{3}_{f,x_k}(x_k)-\Phi^{3}_{f,x_k}(\bar x)-\tilde\sigma_k\|s_k\|^2.
  \]
  Use \cref{eq:condition1} to conclude the claim.
\end{proof}

\begin{theorem}[Monotonicity of ALMTON-Heuristic]\label{theorem:monotonicity}
  Let \cref{assump:1} hold and use the exact local-minimizer oracle convention
  of \cref{prop:model-phase}.
  Then, for all iterations \( k \geq 0 \), \cref{alg:ALMTON_Meta} generates a sequence \( \{x_k\} \) with:
  \begin{equation*}
    f(x_{k+1})\leq f(x_k).
  \end{equation*}
  Moreover, if the iteration is successful (\(\rho_k\ge \eta\)), then with \(s_k:=\bar x-x_k\) and the phase-closing \(\tilde\sigma_k\) we have:
  \begin{equation*}
    \begin{cases}
      f(x_{k+1}) \;\le\; f(x_k) \;-\; \eta\,l\,\|s_k\|^2, & \text{if } \tilde\sigma_k=0,\\[1mm]
      f(x_{k+1}) \;\le\; f(x_k) \;-\; \eta\,\big(f(x_k)-m_{f,x_k}(\bar x;\tilde\sigma_k)\big)
      \;\le\;f(x_k)-\eta l\|s_k\|^2, & \text{if } \tilde\sigma_k>0,
    \end{cases}
  \end{equation*}
  so the decrease is strict whenever the iteration is successful (and \(s_k\neq 0\)).
  If the iteration is unsuccessful (\(\rho_k<\eta\)), then \(x_{k+1}=x_k\) and \(f(x_{k+1})=f(x_k)\).
  Consequently, \(\{f(x_k)\}\) is monotone nonincreasing.
\end{theorem}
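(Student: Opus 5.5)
The proof splits on whether the iteration is successful, using the well-posedness of every ratio $\rho_k$ guaranteed by \cref{prop:model-phase}. First I will confirm that at every nonterminal iteration of ALMTON-Heuristic the model phase terminates, under the exact local-minimizer oracle convention. It returns $\bar x$ with $s_k\neq0$, $A_k(\tilde\sigma_k)\succeq cI_n$, and $f(x_k)-m_{f,x_k}(\bar x;\tilde\sigma_k)\ge (c/6)\|s_k\|^2>0$. Consequently the branch $s_k=0$ of Step~3 never occurs for the Heuristic strategy. Moreover, both denominators in \eqref{eq:rho} are strictly positive: $l\|s_k\|^2>0$ when $\tilde\sigma_k=0$, and the predicted decrease $D_k>0$ when $\tilde\sigma_k>0$. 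So $\rho_k$ is a well-defined real number and the success/failure dichotomy is meaningful. If the iteration is terminal, the algorithm stops and there is nothing to prove.

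\textbf{Successful iterations.} If $\rho_k\ge\eta$, then $x_{k+1}=\bar x$. Rearranging the definition of $\rho_k$ gives $f(x_k)-f(x_{k+1})=\rho_k\cdot(\text{denominator})\ge\eta\cdot(\text{denominator})$, because the denominator is positive. When $\tilde\sigma_k=0$ this is exactly $\eta l\|s_k\|^2$. When $\tilde\sigma_k>0$ it is $\eta\big(f(x_k)-m_{f,x_k}(\bar x;\tilde\sigma_k)\big)$. In the latter case I will chain with \cref{cor:unreg-l} (equivalently \cref{lemma:condition}) and $l\le c/6$ to get the further bound $\eta l\|s_k\|^2$. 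All of this is precisely \cref{lem:descent}, which I will cite directly. Strictness then follows from $\eta>0$, $l>0$, and $s_k\ne0$.

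\textbf{Unsuccessful iterations.} If $\rho_k<\eta$, Step~3 sets $x_{k+1}=x_k$, so $f(x_{k+1})=f(x_k)$ trivially.

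Combining the two cases gives $f(x_{k+1})\le f(x_k)$ for all $k$, hence monotonicity by induction. The only step needing care is the first: checking that the denominators are positive so that the rearrangement of $\rho_k$ preserves the inequality direction. This relies on \cref{prop:model-phase}(iii) together with the fact that nonterminal iterations have $\nabla f(x_k)\ne0$, which forces $s_k\ne0$. I do not expect a genuine obstacle here.
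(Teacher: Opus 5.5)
Your proposal is correct and follows essentially the same route as the paper. Both arguments split on $\rho_k\ge\eta$ versus $\rho_k<\eta$ and, in the successful case, rearrange the ratio \eqref{eq:rho} using the positive denominator guaranteed by the model phase (\cref{prop:model-phase}\textup{(iii)}, equivalently \cref{lemma:condition}), with the rejection case being trivial. Your explicit use of $l\le c/6$ (via \cref{lem:descent}) to reach the final bound $f(x_k)-\eta l\|s_k\|^2$ in the regularized case spells out a step that the paper's proof leaves implicit.
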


\begin{proof}
  We distinguish two cases according to the acceptance test in Step~3.

  \textbf{Case 1 (successful iteration: $\rho_k\ge\eta$).}
  In this case $x_{k+1}=\bar x$ and $s_k:=\bar x-x_k$.
  Recall the mixed ratio \eqref{eq:rho} with the \emph{phase-closing} regularization $\tilde\sigma_k$:
  \begin{equation*}
    \rho_k =
    \begin{cases}
      \dfrac{f(x_k)-f(\bar x)}{\,l\,\|s_k\|^2\,}, & \tilde\sigma_k=0,\\[2mm]
      \dfrac{f(x_k)-f(\bar x)}{\,f(x_k)-m_{f,x_k}(\bar x;\tilde\sigma_k)\,}, & \tilde\sigma_k>0.
    \end{cases}
  \end{equation*}
  Since $\rho_k\ge\eta$ and $\eta\in(0,1)$, we obtain in each subcase:
  \begin{equation*}
    \begin{aligned}
      \tilde\sigma_k=0:\quad
      &f(x_k)-f(x_{k+1}) \;=\; \rho_k\,l\,\|s_k\|^2 \;\ge\; \eta\,l\,\|s_k\|^2.\\[1mm]
      \tilde\sigma_k>0:\quad
      &f(x_k)-f(x_{k+1}) \;=\; \rho_k\big(f(x_k)-m_{f,x_k}(\bar x;\tilde\sigma_k)\big)
      \;\ge\; \eta\big(f(x_k)-m_{f,x_k}(\bar x;\tilde\sigma_k)\big).
    \end{aligned}
  \end{equation*}
  By~\cref{lemma:condition} (the model value bound) we have
  \(m_{f,x_k}(\bar x;\tilde\sigma_k)\le f(x_k)\), so the term
  \(f(x_k)-m_{f,x_k}(\bar x;\tilde\sigma_k)\) is nonnegative.
  Moreover,
  \begin{equation*}
    f(x_k)-m_{f,x_k}(\bar x;\tilde\sigma_k)
    = \big(\Phi^3_{f,x_k}(x_k)-\Phi^3_{f,x_k}(\bar x)\big) - \tilde\sigma_k \|s_k\|^2
  \end{equation*}
  because \(m_{f,x_k}(x;\sigma)=\Phi^3_{f,x_k}(x)+\sigma\|x-x_k\|^2\) and \(\Phi^3_{f,x_k}(x_k)=f(x_k)\).
  Applying~\cref{lemma:gap} gives
  \(\Phi^3_{f,x_k}(x_k)-\Phi^3_{f,x_k}(\bar x)\ge \tilde\sigma_k \|s_k\|^2\), hence
  \(f(x_k)-m_{f,x_k}(\bar x;\tilde\sigma_k)\ge 0\).
  By construction of Step~2, Step~3 is only formed with a bona fide step \(s_k\ne 0\) and positive denominator in the ratio, so the decrease is \emph{strict} in both subcases:
  \begin{equation*}
    \begin{cases}
      f(x_k)-f(x_{k+1}) \ge \eta\,l\,\|s_k\|^2 > 0, & \tilde\sigma_k=0,\\[1mm]
      f(x_k)-f(x_{k+1}) \ge \eta\,\big(f(x_k)-m_{f,x_k}(\bar x;\tilde\sigma_k)\big) > 0, & \tilde\sigma_k>0.
    \end{cases}
  \end{equation*}

  \textbf{Case 2: Unsuccessful iteration (\( \rho_k < \eta \)).}
  The iterate is rejected and \(x_{k+1}=x_k\).
  Therefore \(f(x_{k+1})=f(x_k)\) and \(x_{k+1}-x_k=\mathbf{0}\).

  \medskip
  Combining the two cases, we conclude \(f(x_{k+1})\le f(x_k)\) for all \(k\),
  with strict inequality on every successful iteration.
  Hence \(\{f(x_k)\}\) is monotone nonincreasing.
\end{proof}

\begin{lemma}[Uniform bound on the LM backstop]\label{lem:alpha-max}
Under \cref{assump:2},
\begin{equation}\label{eq:alpha_max}
  \alpha_{LM}(x_k)\le\alpha_{\max}
  :=\sqrt{3\Lambda_1\sqrt n\,\Lambda_3}+\Lambda_2.
\end{equation}
\end{lemma}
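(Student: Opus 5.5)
We bound the two terms in the definition \eqref{eq:alpha_LM} of $\alpha_{LM}(x_k)$ separately, using only the constants $\Lambda_1,\Lambda_2,\Lambda_3$ from \cref{assump:2}.

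\textbf{Curvature term.} The second term is $-\min\{0,\lambda_{\min}(\nabla^2 f(x_k))\}$. It equals either $0$ or $|\lambda_{\min}(\nabla^2 f(x_k))|$. For the symmetric matrix $\nabla^2 f(x_k)$, the induced norm $\|\cdot\|_{[2]}$ from \eqref{eq:tensor norm} is the spectral norm, so every eigenvalue is bounded in modulus by $\Lambda_2$. Hence this term is at most $\Lambda_2$.

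\textbf{Square-root term.} By Cauchy--Schwarz, $g^\top h\le\|g\|\|h\|$, so the radicand satisfies $\tfrac32(\|g\|\|h\|+g^\top h)\le 3\|g\|\|h\|$. The vector $g$ has entries $|\partial_i f(x_k)|$, so $\|g\|=\|\nabla f(x_k)\|\le\Lambda_1$. For $h$, note that each slice $(\nabla^3 f(x_k))_i=\nabla^3 f(x_k)[e_i,\cdot,\cdot]$ satisfies
\[
\|(\nabla^3 f(x_k))_i\|_2=\max_{\|u\|=\|v\|=1}\bigl|\nabla^3 f(x_k)[e_i,u,v]\bigr|\le\|\nabla^3 f(x_k)\|_{[3]}\le\Lambda_3 .
\]
Therefore every component of $h$ is at most $\Lambda_3$, and $\|h\|\le\sqrt n\,\Lambda_3$. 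Substituting these bounds gives a square-root term of at most $\sqrt{3\Lambda_1\sqrt n\,\Lambda_3}$.

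Adding the two bounds yields \eqref{eq:alpha_max}. The only point requiring care is the slice-norm bound: each slice is the tensor contracted with a unit vector $e_i$, so its operator norm is dominated by the full tensor norm. This step is also the source of the $\sqrt n$ factor, which comes from the crude componentwise bound on $h$. A sharper argument might remove this factor, but it is not needed for the stated bound.
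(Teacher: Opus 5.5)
Your argument is correct and is essentially the paper's proof: Cauchy--Schwarz on $g^\top h$, $\|g\|=\|\nabla f(x_k)\|\le\Lambda_1$, each slice's spectral norm dominated by $\|\nabla^3 f(x_k)\|_{[3]}\le\Lambda_3$ (hence $\|h\|\le\sqrt n\,\Lambda_3$), and $-\min\{0,\lambda_{\min}(\nabla^2 f(x_k))\}\le\Lambda_2$. Your closing aside is wrong, however: the bound is sharp, so the $\sqrt n$ factor cannot be removed in general. Take the tensor $\Lambda_3\sum_i e_i\otimes e_i\otimes e_i$, whose $[3]$-norm is $\Lambda_3$ but which gives $\|h\|=\sqrt n\,\Lambda_3$, together with $\nabla f(x_k)=\Lambda_1 n^{-1/2}\mathbf 1$ and $\nabla^2 f(x_k)=-\Lambda_2 I$; then $\alpha_{LM}(x_k)=\alpha_{\max}$ exactly.
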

\begin{proof}
The vectors in \eqref{eq:alpha_LM} satisfy
$\|g_k\|=\|\nabla f(x_k)\|\le\Lambda_1$ and, because every tensor slice is
a matrix,
\[
  \|(\nabla^3f(x_k))_i\|_2\le\|\nabla^3f(x_k)\|_{[3]}\le\Lambda_3,
  \qquad \|h_k\|\le\sqrt n\,\Lambda_3.
\]
Thus $g_k^\top h_k\le\|g_k\|\|h_k\|\le
\Lambda_1\sqrt n\,\Lambda_3$.  Also
$-\min\{0,\lambda_{\min}(\nabla^2f(x_k))\}\le\Lambda_2$.  Substitution in
\eqref{eq:alpha_LM} proves the claim.
\end{proof}

\begin{lemma}[Uniform upper bound for $\|s_k\|$]\label{lemma:upper-bound-s_k}
Under \cref{assump:2}, every valid Heuristic trial satisfies
\begin{equation}\label{eq:step-upper-bound}
  \|s_k\|\le S_{\max}:=4\Lambda_1c^{-1}.
\end{equation}
\end{lemma}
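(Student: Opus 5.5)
The plan is to bound $\|s_k\|$ using only the first- and second-order conditions \eqref{eq:kkt-stationarity}--\eqref{eq:kkt-curvature} at the trial point and the curvature floor from \cref{prop:model-phase}(ii). No Taylor remainder estimate is needed. The idea is to test the stationarity equation against $s_k$ itself. The gradient term then becomes linear in $\|s_k\|$. The quadratic and cubic terms together become bounded below by a positive multiple of $\|s_k\|^2$. Comparing the two growth rates forces $\|s_k\|$ to be at most a constant times $\Lambda_1/c$.

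Concretely, I will write $a:=s_k^\top A_k(\tilde\sigma_k)s_k$ and $t:=\nabla^3f(x_k)[s_k,s_k,s_k]$, exactly as in the proof of \Cref{lem:exact-identity}. Multiplying \eqref{eq:kkt-stationarity} by $s_k^\top$ gives
\[
-\nabla f(x_k)^\top s_k=a+\tfrac12 t .
\]
Next, \eqref{eq:kkt-curvature} gives $s_k^\top M_k(s_k;\tilde\sigma_k)s_k=a+t\ge0$, that is, $t\ge -a$. Hence $a+\tfrac12t\ge \tfrac12 a$. By \cref{prop:model-phase}(ii), $A_k(\tilde\sigma_k)\succeq cI_n$, so $a\ge c\|s_k\|^2$. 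Combining these,
\[
\tfrac{c}{2}\|s_k\|^2\le -\nabla f(x_k)^\top s_k\le \|\nabla f(x_k)\|\,\|s_k\|\le\Lambda_1\|s_k\|,
\]
where the last step uses \cref{assump:2} with $j=1$. Since $s_k\neq0$ at a nonterminal iteration, I can divide by $\|s_k\|$ to get $\|s_k\|\le 2\Lambda_1/c$. This is in fact stronger than the stated $S_{\max}=4\Lambda_1/c$, so the claim follows.

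An alternative route, matching the constant $4$ more loosely, goes through $D_k$. Eliminating $t$ in \Cref{lem:exact-identity} gives $D_k=-\tfrac23\nabla f(x_k)^\top s_k-\tfrac16 a$. The bound $D_k\ge(c/6)\|s_k\|^2$ then yields a comparable estimate. The only point requiring care is to invoke the curvature inequality $M_k\succeq0$ only in the quadratic form $s_k^\top M_k s_k\ge0$, which is precisely what local minimality provides. I do not see any real obstacle. The one thing to check is that "valid Heuristic trial" includes both the unregularized phase ($\tilde\sigma_k=0$) and the regularized phase. It does, since both pass the test $\lambda_{\min}(A_k(\tilde\sigma_k))\ge c$ and both return a local minimizer of the phase model.
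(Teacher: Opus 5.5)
Your argument is correct, and it differs from the paper's in the inequality that closes the estimate. The paper uses \eqref{eq:kkt-stationarity} to eliminate the cubic term, writing $m_{f,x_k}(\bar x;\tilde\sigma_k)-f(x_k)=\tfrac23\nabla f(x_k)^\top s_k+\tfrac16 s_k^\top A_k(\tilde\sigma_k)s_k$. It then uses only the \emph{sign} of the model decrease ($<0$, from \cref{prop:model-phase}(iii)) together with $A_k\succeq cI_n$. This gives $(c/6)\|s_k\|^2<(2/3)\|\nabla f(x_k)\|\,\|s_k\|$, hence the constant $4$.

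You instead feed the second-order necessary condition $s_k^\top M_k s_k=a+t\ge0$ directly into the tested stationarity equation. That yields $-\nabla f(x_k)^\top s_k\ge a/2\ge (c/2)\|s_k\|^2$ and the sharper bound $2\Lambda_1/c$, which implies the stated one.

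Each route has an advantage. The paper's route, in its non-strict form, needs only three ingredients: stationarity, the curvature floor, and the model-value inequality $m_{f,x_k}(\bar x;\tilde\sigma_k)\le f(x_k)$, which the Heuristic phase checks explicitly. So it would survive an oracle that returns a mere stationary point passing those tests. Your route relies on local minimality of $\bar x$, which the exact-oracle convention does guarantee, and in exchange it gives the better constant.

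Your ``alternative route'' through $D_k$ is exactly the paper's argument. If you use the full bound $D_k\ge(c/6)\|s_k\|^2$ there rather than only $D_k>0$, it also gives $2\Lambda_1/c$. The factor $4$ is therefore slack from using only the sign of $D_k$.
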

\begin{proof}
Using \eqref{eq:kkt-stationarity} to eliminate the cubic term gives
\[
  m_{f,x_k}(\bar x;\tilde\sigma_k)-f(x_k)
  =\tfrac23\nabla f(x_k)^\top s_k
   +\tfrac16s_k^\top A_k(\tilde\sigma_k)s_k<0.
\]
Since $A_k(\tilde\sigma_k)\succeq cI_n$ and $s_k\ne0$,
$(c/6)\|s_k\|^2<(2/3)\|\nabla f(x_k)\|\|s_k\|$.
Canceling $\|s_k\|$ and using \cref{assump:2} proves the result.
\end{proof}

\begin{lemma}[Uniform bound on the regularization]\label{lemma:sigma-bound}
Under \cref{assump:1,assump:2}, define
\begin{equation}\label{eq:sigma-threshold}
  \chi:=\frac{16L\Lambda_1^2}{3c^2},\qquad
  \tau:=\max\{1,\alpha_{\max},c+\Lambda_2,\chi\},
\end{equation}
and
\begin{equation}\label{eq:regularization-upper-bound}
  \tilde\sigma_{\max}:=\gamma\tau.
\end{equation}
Then, at every nonterminal outer iteration $k$, its starting value and
phase-closing value satisfy
\begin{equation}\label{eq:sigma-both-bounds}
  0\le\sigma_k\le\tilde\sigma_k\le\tilde\sigma_{\max}.
\end{equation}
Moreover, every valid regularized trial with $\tilde\sigma_k\ge\chi$ is
successful.
\end{lemma}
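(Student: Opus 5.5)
We prove the success claim first, since the bound on the regularization uses it. Then we bound $\tilde\sigma_k$ by induction on $k$, separating the updates made inside the model phase from the outer update in Step~4.

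\emph{Success for large regularization.} Take a valid Heuristic trial with $\tilde\sigma_k\ge\chi>0$, and write $D_k:=f(x_k)-m_{f,x_k}(\bar x;\tilde\sigma_k)$. By \cref{prop:model-phase}\textup{(iii)}, $D_k>0$. By \cref{cor:p3},
\[
  f(\bar x)\le\Phi^3_{f,x_k}(\bar x)+\tfrac L3\|s_k\|^4
  =m_{f,x_k}(\bar x;\tilde\sigma_k)-\tilde\sigma_k\|s_k\|^2+\tfrac L3\|s_k\|^4 .
\]
\cref{lemma:upper-bound-s_k} gives $\|s_k\|^2\le16\Lambda_1^2/c^2$, so
\[
  \tfrac L3\|s_k\|^4\le \tfrac{16L\Lambda_1^2}{3c^2}\|s_k\|^2=\chi\|s_k\|^2\le\tilde\sigma_k\|s_k\|^2 .
\]
Hence $f(\bar x)\le m_{f,x_k}(\bar x;\tilde\sigma_k)$. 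It follows that $f(x_k)-f(\bar x)\ge D_k$, that is, $\rho_k\ge1>\eta$, and the iteration is successful.

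\emph{Exit once $\tilde\sigma$ reaches $\tau$.} Fix $k$ and suppose the inner variable satisfies $\tilde\sigma\ge\tau$ at some attempt.
\begin{itemize}
  \item Since $\tau\ge1$, we have $\tilde\sigma>0$. Since $\tau\ge\alpha_{\max}\ge\alpha_{LM}(x_k)$ by \cref{lem:alpha-max}, \cref{lem:positive-alpha-backstop} gives a strict local minimizer, so the oracle returns one.
  \item From $\|\nabla^2f(x_k)\|\le\Lambda_2$ we get $\lambda_k(\tilde\sigma)\ge-\Lambda_2+2\tilde\sigma\ge-\Lambda_2+2(c+\Lambda_2)\ge c$.
  \item By \eqref{eq:kkt-curvature} and \cref{lem:exact-identity}, the model-value test holds.
\end{itemize}
So the phase exits at this value without any update. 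Consequently every inner update starts from some $\tilde\sigma<\tau$, and at that point $\lambda_k(\tilde\sigma)\ge-\Lambda_2+2\tilde\sigma$. We bound each term of the update maximum:
\begin{itemize}
  \item $\alpha_{LM}(x_k)\le\tau$;
  \item $\gamma\max\{1,\tilde\sigma\}\le\gamma\tau$;
  \item $\tilde\sigma+(c-\lambda_k(\tilde\sigma))_+\le\max\{\tilde\sigma,\ c+\Lambda_2-\tilde\sigma\}\le\tau$.
\end{itemize}
Thus each inner update produces a value at most $\gamma\tau$. The phase-closing value is therefore either the starting value $\sigma_k$ (no update) or at most $\gamma\tau$, so $\tilde\sigma_k\le\max\{\sigma_k,\gamma\tau\}$.

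\emph{Outer update.} Since inner updates only increase $\tilde\sigma$, we have $\sigma_k\le\tilde\sigma_k$. It remains to show $\sigma_k\le\gamma\tau$ for all $k$, which we do by induction; the base case is $\sigma_0=0$.
\begin{itemize}
  \item After a success, $\sigma_{k+1}=0$.
  \item After a failure, $\sigma_{k+1}=\max\{\alpha_{LM}(x_k),\gamma\max\{1,\tilde\sigma_k\}\}$. By the success claim, a failure with $\tilde\sigma_k>0$ forces $\tilde\sigma_k<\chi\le\tau$. A failure with $\tilde\sigma_k=0$ gives $\gamma\max\{1,\tilde\sigma_k\}=\gamma\le\gamma\tau$. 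In both cases $\sigma_{k+1}\le\max\{\tau,\gamma\tau\}=\gamma\tau$.
\end{itemize}
Combining the two parts gives $0\le\sigma_k\le\tilde\sigma_k\le\gamma\tau=\tilde\sigma_{\max}$, which is \eqref{eq:sigma-both-bounds}.

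\emph{Main obstacle.} The delicate step is the spectral-correction term $\tilde\sigma+(c-\lambda_k)_+$. One must use that an update only occurs below $\tau$, and that $\lambda_k$ grows like $2\tilde\sigma$, so that this term cannot push $\tilde\sigma$ above $c+\Lambda_2$. A second point to keep track of is that a starting value $\sigma_k$ already larger than $\tau$ causes an immediate exit rather than further growth.
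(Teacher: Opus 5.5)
Your proof is correct and follows essentially the same route as the paper. It proves the success claim from the Taylor remainder together with $\|s_k\|\le 4\Lambda_1/c$. It then observes that any candidate $\tilde\sigma\ge\tau$ passes every exit test, so each inner update starts below $\tau$ and lands at most at $\gamma\tau$. Finally, it splits failed outer iterations into the case $\tilde\sigma_k>0$, which forces $\tilde\sigma_k<\chi\le\tau$, and the case $\tilde\sigma_k=0$.

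Your explicit bound $\tilde\sigma_k\le\max\{\sigma_k,\gamma\tau\}$ makes visible a point the paper leaves implicit: a starting value already at least $\tau$ exits immediately. The only cosmetic slip is writing $\chi>0$, which can fail when $L=0$ but is not needed, since a regularized trial already has $\tilde\sigma_k>0$.
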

\begin{proof}
Let $D_k:=f(x_k)-m_{f,x_k}(\bar x;\tilde\sigma_k)>0$ and
$R_k:=f(\bar x)-\Phi^3_{f,x_k}(\bar x)$.  The regularized ratio obeys
\begin{equation}\label{eq:successful-iteration-ratio-deviation}
  \rho_k-1
  =\frac{\tilde\sigma_k\|s_k\|^2-R_k}{D_k}.
\end{equation}
By \Cref{cor:p3}, $R_k\le(L/3)\|s_k\|^4$.  Hence
\Cref{lemma:upper-bound-s_k} implies $\rho_k\ge1>\eta$ whenever
$\tilde\sigma_k\ge(L/3)S_{\max}^2=\chi$.

We next control overshoot.  If an inner candidate $\tilde\sigma<\tau$, the
three entries in the Heuristic update are bounded by
\[
  \alpha_{LM}(x_k)\le\tau,\qquad
  \gamma\max\{1,\tilde\sigma\}\le\gamma\tau,\qquad
  \tilde\sigma+(c-\lambda_k(\tilde\sigma))_+
  \le\max\{\tilde\sigma,c+\Lambda_2\}\le\tau.
\]
For the last inequality, if the positive part is inactive the entry is
$\tilde\sigma<\tau$; if it is active, use
$\lambda_k(\tilde\sigma)=\lambda_{\min}(\nabla^2f(x_k))+2\tilde\sigma$
to bound the entry by $c+\Lambda_2$.
Thus the first candidate crossing $\tau$ is at most $\gamma\tau$.  A candidate
at least $\tau$ is positive, satisfies the hypotheses of
\cref{lem:positive-alpha-backstop},
and has $A_k(\tilde\sigma)\succeq cI_n$; hence the phase exits.  It is also at
least $\chi$, so the resulting outer iteration is successful.

If a failed outer iteration has $\tilde\sigma_k>0$, it must have
$\tilde\sigma_k<\chi\le\tau$.  If instead $\tilde\sigma_k=0$, its outer
update is
$\max\{\alpha_{LM}(x_k),\gamma\}\le\gamma\tau$ directly.  Thus every
failed outer update is at most $\gamma\tau$, while a successful iteration
resets the next outer value to zero.  Starting from $\sigma_0=0$, induction gives
\eqref{eq:sigma-both-bounds}, including both inner and outer overshoot.
\end{proof}

\begin{lemma}[Lower bound for a valid trial step]\label{lemma:lower-bound-s_k}
Under \cref{assump:1,assump:2}, let $B:=2\tilde\sigma_{\max}+L$.
Every valid trial step satisfies
\begin{equation}\label{eq:steplength-lower-bound}
  \|s_k\|\ge\min\left\{1,
  \frac{\|\nabla f(x_k+s_k)\|}{B}\right\}.
\end{equation}
\end{lemma}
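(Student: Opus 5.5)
The idea is the standard AR$p$-type lower bound on the step. We bound the true gradient at the trial point by the model's stationarity residual plus the Taylor remainder. Write $\bar x=x_k+s_k$. By \eqref{eq:kkt-stationarity}, the phase model is stationary at $\bar x$:
\[
\nabla \Phi^3_{f,x_k}(\bar x)+2\tilde\sigma_k s_k=0 .
\]
Hence
\[
\nabla f(\bar x)=\big(\nabla f(\bar x)-\nabla\Phi^3_{f,x_k}(\bar x)\big)-2\tilde\sigma_k s_k .
\]

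Next we take norms. The gradient part of \cref{cor:p3} gives $\|\nabla f(\bar x)-\nabla\Phi^3_{f,x_k}(\bar x)\|\le L\|s_k\|^3$. For the regularization term, \cref{lemma:sigma-bound} gives $\tilde\sigma_k\le\tilde\sigma_{\max}$ at every nonterminal iteration. Together these yield
\[
\|\nabla f(x_k+s_k)\|\le 2\tilde\sigma_{\max}\|s_k\|+L\|s_k\|^3 .
\]

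We finish by splitting into two cases. If $\|s_k\|\ge 1$, the claimed bound $\|s_k\|\ge\min\{1,\cdot\}$ holds trivially. Otherwise $\|s_k\|<1$, so $\|s_k\|^3\le\|s_k\|$, and the display becomes $\|\nabla f(x_k+s_k)\|\le(2\tilde\sigma_{\max}+L)\|s_k\|=B\|s_k\|$. Dividing by $B>0$ gives $\|s_k\|\ge\|\nabla f(x_k+s_k)\|/B$. Note that $B>0$ because $\tilde\sigma_{\max}=\gamma\tau\ge\gamma>0$.

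There is no real obstacle here. The only points to check are that ``valid trial step'' means the returned $\bar x$ satisfies the exact stationarity \eqref{eq:kkt-stationarity} under the oracle convention of \cref{prop:model-phase}, and that the uniform bound $\tilde\sigma_k\le\tilde\sigma_{\max}$ from \cref{lemma:sigma-bound} applies to the phase-closing value used to compute $s_k$. Both are already established above. The unregularized case $\tilde\sigma_k=0$ is covered by the same argument, since the $2\tilde\sigma_k s_k$ term simply vanishes.
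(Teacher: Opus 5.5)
Your proposal is correct and follows the paper's proof essentially line for line: you use the model stationarity \eqref{eq:kkt-stationarity} and the gradient remainder of \cref{cor:p3} to get $\|\nabla f(x_k+s_k)\|\le 2\tilde\sigma_{\max}\|s_k\|+L\|s_k\|^3$, then make the same case split on whether $\|s_k\|<1$. Your remark that $B>0$ because $\tilde\sigma_{\max}=\gamma\tau\ge\gamma>1$ is a small detail the paper leaves implicit.
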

\begin{proof}
Combining \eqref{eq:kkt-stationarity} with the gradient remainder in
\Cref{cor:p3} gives
\[
  \nabla f(x_k+s_k)=R_k^g-2\tilde\sigma_ks_k,
  \qquad \|R_k^g\|\le L\|s_k\|^3.
\]
Consequently,
$\|\nabla f(x_k+s_k)\|\le
2\tilde\sigma_{\max}\|s_k\|+L\|s_k\|^3$.
If $\|s_k\|<1$, the right-hand side is at most $B\|s_k\|$; if
$\|s_k\|\ge1$, \eqref{eq:steplength-lower-bound} is immediate.
\end{proof}
For $k\ge0$, let $\mathcal S_k:=\{0\le j\le k:\rho_j\ge\eta\}$ and define
the failure sets using the \emph{outer} value at the beginning of the iteration,
\[
  \mathcal U_k^0:=\{0\le j\le k:\rho_j<\eta,\ \sigma_j=0\},\qquad
  \mathcal U_k^+:=\{0\le j\le k:\rho_j<\eta,\ \sigma_j>0\}.
\]

\begin{lemma}[Iteration count]\label{lemma:iteration-bound}
Under \cref{assump:1,assump:2} and the Heuristic update, set
\begin{equation}\label{eq:q-definition}
  q:=\left\lceil\frac{\log\tilde\sigma_{\max}}{\log\gamma}\right\rceil.
\end{equation}
Then $q\ge1$ and ALMTON-Heuristic satisfies
\begin{equation}\label{eq:iteration-complexity}
  k+1\le(2+q)|\mathcal S_k|+1+q.
\end{equation}
\end{lemma}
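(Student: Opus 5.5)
The plan is a run-decomposition argument. I would partition the outer iterations $0,\dots,k$ into maximal \emph{runs} of consecutive unsuccessful iterations, show that each run has at most $1+q$ elements, and use the fact that every run is closed either by a successful iteration or by the end of the horizon $k$.

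\textbf{Step 1: $q\ge1$.} By \eqref{eq:sigma-threshold}, $\tau\ge1$, so $\tilde\sigma_{\max}=\gamma\tau\ge\gamma>1$. Hence $\log\tilde\sigma_{\max}/\log\gamma\ge1$, and therefore $q\ge1$.

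\textbf{Step 2: growth of $\sigma$ along failures.} Suppose iteration $j$ is unsuccessful. Then Step~4 with the Heuristic rule gives
\[
\sigma_{j+1}=\max\{\alpha_{LM}(x_j),\gamma\max\{1,\tilde\sigma_j\}\}\ge\gamma\max\{1,\tilde\sigma_j\}.
\]
By \eqref{eq:sigma-both-bounds}, $\tilde\sigma_j\ge\sigma_j$, so $\sigma_{j+1}\ge\gamma\max\{1,\sigma_j\}$.

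Consider a run of consecutive failures $j_0,j_0+1,\dots,j_0+r-1$. Every failure yields a next outer value at least $\gamma>0$. So inside a run, only the first iteration can have $\sigma_j=0$, and this happens exactly when the run starts right after a success or at $j_0=0$, because a success resets $\sigma$ to $0$ and $\sigma_0=0$. Thus each run contains at most one element of $\mathcal U_k^0$.

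For the remaining members of the run, which lie in $\mathcal U_k^+$, induction on the recursion gives the following. The $m$-th iteration of the run having $\sigma_j>0$ satisfies $\sigma_j\ge\gamma^{m}$. This holds either because its predecessor failure forces $\sigma\ge\gamma$ initially, or, if the run begins with $\sigma_{j_0}>0$, because that value was itself produced by an earlier failure and is therefore $\ge\gamma$.

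\textbf{Step 3: bounding the run length.} Combining Step~2 with the uniform bound $\sigma_j\le\tilde\sigma_{\max}$ from \cref{lemma:sigma-bound} gives $\gamma^m\le\tilde\sigma_{\max}$. Hence $m\le\log\tilde\sigma_{\max}/\log\gamma\le q$. Each run therefore has at most $1+q$ failures.

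A run can start only at iteration $0$ or immediately after a successful iteration. So the number of runs is at most $|\mathcal S_k|+1$, and
\[
k+1=|\mathcal S_k|+|\mathcal U_k^0|+|\mathcal U_k^+|\le|\mathcal S_k|+(1+q)(|\mathcal S_k|+1)=(2+q)|\mathcal S_k|+1+q.
\]

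\textbf{Main obstacle.} I expect the delicate point to be the careful bookkeeping in Step~2. It must correctly handle the case where a run begins with $\sigma_{j_0}>0$. It must also use the inequality $\sigma_j\le\tilde\sigma_j$, so that the update based on the phase-closing value $\tilde\sigma_j$ still forces geometric growth of the \emph{outer} $\sigma$. Finally, it must confirm that the overshoot bound $\tilde\sigma_{\max}$ applies to every outer value $\sigma_j$, which is exactly what \eqref{eq:sigma-both-bounds} provides.
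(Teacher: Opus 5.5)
Your proposal is correct and takes essentially the same route as the paper. The paper bounds $|\mathcal U_k^0|\le|\mathcal S_k|+1$ and $|\mathcal U_k^+|\le q(|\mathcal S_k|+1)$ separately, via $\sigma_{j+1}\ge\gamma\tilde\sigma_j\ge\gamma\sigma_j$ and the cap $\tilde\sigma_{\max}$; this is precisely your bound of at most $1+q$ failures per run over at most $|\mathcal S_k|+1$ runs (the batching pictured in Figure~\ref{fig:iteration_batches}), and your extra case of a maximal run starting with $\sigma_{j_0}>0$ is vacuous, since $\sigma_0=0$ and every success resets the outer value to zero.
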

\begin{figure}[tb]
  \centering
  \makebox[\linewidth][c]{%
    \hspace*{-0.08\linewidth}%
    \resizebox{0.98\linewidth}{!}{%
      \usetikzlibrary{calc,positioning}

\definecolor{iterBatchInk}{HTML}{24313A}
\definecolor{iterBatchMuted}{HTML}{4F5E68}
\definecolor{iterBatchSuccessFill}{HTML}{DDF1EA}
\definecolor{iterBatchSuccessLine}{HTML}{2F7464}
\definecolor{iterBatchZeroFill}{HTML}{FFF0CC}
\definecolor{iterBatchZeroLine}{HTML}{A66A00}
\definecolor{iterBatchPositiveFill}{HTML}{E8EDF2}
\definecolor{iterBatchPositiveLine}{HTML}{596B7A}
\definecolor{iterBatchBandFill}{HTML}{F2F7F8}
\definecolor{iterBatchRuleFill}{HTML}{F6F8FA}

\begin{tikzpicture}[
  x=1cm,
  y=1cm,
  font=\small,
  text=iterBatchInk,
  >=latex,
  iter/.style={
    rectangle,
    rounded corners=1.5pt,
    minimum height=1.00cm,
    align=center,
    inner xsep=6pt,
    line width=0.65pt
  },
  zerofail/.style={
    iter,
    draw=iterBatchZeroLine,
    fill=iterBatchZeroFill,
    minimum width=2.65cm
  },
  posfail/.style={
    iter,
    draw=iterBatchPositiveLine,
    fill=iterBatchPositiveFill,
    minimum width=1.62cm
  },
  success/.style={
    iter,
    draw=iterBatchSuccessLine,
    fill=iterBatchSuccessFill,
    minimum width=1.82cm
  },
  batch/.style={
    rectangle,
    rounded corners=1.5pt,
    draw=iterBatchSuccessLine,
    fill=white,
    minimum width=1.40cm,
    minimum height=0.68cm,
    align=center,
    line width=0.6pt
  },
  tail/.style={
    batch,
    draw=iterBatchMuted,
    fill=white,
    dashed,
    minimum width=2.15cm,
    minimum height=0.82cm
  },
  flow/.style={
    ->,
    draw=iterBatchMuted,
    line width=0.65pt,
    shorten >=2pt,
    shorten <=2pt
  },
  guide/.style={
    draw=iterBatchMuted!75,
    line width=0.45pt
  },
  panel/.style={
    font=\footnotesize\bfseries,
    text=iterBatchInk,
    anchor=west
  },
  note/.style={
    font=\scriptsize,
    text=iterBatchMuted,
    align=center
  },
  rulebox/.style={
    rectangle,
    rounded corners=2pt,
    draw=iterBatchPositiveLine!70,
    fill=iterBatchRuleFill,
    line width=0.5pt,
    minimum width=7.80cm,
    minimum height=0.72cm,
    align=center,
    font=\scriptsize,
    text=iterBatchMuted
  }
]

\node[panel] at (0,5.45)
  {\textbf{(a)} One completed batch: $|\mathcal B_i|\le q+2$};
\node[note,anchor=east] at (13.85,5.45)
  {$q=\left\lceil\log(\tilde\sigma_{\max})/\log(\gamma)\right\rceil$};

\node[rulebox] (growthrule) at (6.45,4.52)
  {$\mathcal B_i=\mathcal S_i$ \;or\;
   $\mathcal B_i=\mathcal U_i^0(\mathcal U^+)^{r_i}\mathcal S_i$,
   $0\le r_i\le q$\\[-1pt]
   after $\mathcal U^0$: $\sigma_{j+1}\ge\gamma$; \qquad
   after each $\mathcal U^+$:
   $\sigma_{j+1}\ge\gamma\tilde\sigma_j\ge\gamma\sigma_j$};

\node[zerofail] (u0) at (1.55,3.18)
  {$\mathcal U^0$\\[-1pt]
   {\scriptsize failure with starting $\sigma_j=0$}};
\node[posfail,minimum width=3.75cm] (uprun) at (6.25,3.18)
  {$(\mathcal U^+)^{r_i}$\\[-1pt]
   {\scriptsize failures with starting $\sigma_j>0$}};
\node[success] (succ) at (10.72,3.18)
  {$\mathcal S_i$\\[-1pt]
   {\scriptsize $\rho_j\ge\eta$}};

\draw[flow] (u0.east) -- (uprun.west);
\draw[flow] (uprun.east) -- (succ.west);
\draw[flow] (succ.east) -- ++(1.45,0);

\draw[guide]
  (growthrule.south) -- (6.45,3.78);

\node[note] at (1.55,2.24)
  {optional; at most one};
\node[note] at (6.25,2.24)
  {optional; $0\le r_i\le q$};
\node[note] at (10.72,2.24)
  {exactly one success};
\node[note] at (12.72,3.58)
  {reset: $\sigma_{j+1}=0$};

\draw[iterBatchMuted!35,line width=0.45pt]
  (0,1.78) -- (13.85,1.78);

\node[panel] at (0,1.55)
  {\textbf{(b)} Prefix decomposition};

\draw[rounded corners=3pt,draw=iterBatchSuccessLine!55,
      fill=iterBatchBandFill,line width=0.55pt]
  (0.22,0.12) rectangle (8.65,1.06);
\node[note,text=iterBatchSuccessLine] at (4.44,1.20)
  {$|\mathcal S_k|$ completed batches};

\node[batch] (b1) at (1.15,0.59) {$\mathcal B_1$};
\node[batch] (b2) at (3.05,0.59) {$\mathcal B_2$};
\node[font=\large,text=iterBatchMuted] (bdots) at (4.62,0.59) {$\cdots$};
\node[batch,minimum width=2.10cm] (bs) at (6.95,0.59)
  {$\mathcal B_{|\mathcal S_k|}$};
\draw[flow] (b1.east) -- (b2.west);
\draw[flow] (b2.east) -- (bdots.west);
\draw[flow] (bdots.east) -- (bs.west);

\node[note] at (4.44,-0.18)
  {each completed batch has length at most $q+2$};

\node[note] at (11.20,1.20)
  {one trailing partial batch};
\node[tail] (trail) at (11.20,0.59)
  {$\mathcal T_k$\\[-1pt]{\scriptsize possibly empty}};
\draw[flow] (8.65,0.59) -- (trail.west);
\node[note] at (11.20,-0.18)
  {$|\mathcal T_k|\le q+1$};

\node[anchor=center,font=\footnotesize] at (6.92,-0.92)
  {$\displaystyle
    k+1
    =|\mathcal S_k|+|\mathcal U_k^0|+|\mathcal U_k^+|
    \le(q+2)|\mathcal S_k|+(q+1).$};

\end{tikzpicture}%
    }%
  }
  \caption{\textbf{Outer-iteration batching for ALMTON-Heuristic.}
  Each completed batch ends in a success and may contain one zero-start failure
  followed by at most $q$ positive-start failures.  Success resets $\sigma$;
  failed positive-start iterations increase it geometrically.  One trailing
  partial batch may remain.}
  \label{fig:iteration_batches}
\end{figure}
\begin{proof}
Because $\tilde\sigma_{\max}=\gamma\tau\ge\gamma>1$, $q\ge1$.
A success resets the next outer value to zero; after a failure whose outer
value is zero, the next outer value is at least $\gamma>0$.  Therefore
\begin{equation}\label{eq:U1-bound}
  |\mathcal U_k^0|\le|\mathcal S_k|+1.
\end{equation}
During a consecutive run of failures with positive outer values, the model
phase only increases the parameter and the outer update gives
$\sigma_{j+1}\ge\gamma\tilde\sigma_j\ge\gamma\sigma_j$.  The first positive
outer value is at least $\gamma$ and every phase-closing value is at most
$\tilde\sigma_{\max}$, so such a run has length at most $q$.  Hence
\begin{equation}\label{eq:U2-bound}
  |\mathcal U_k^+|\le q(|\mathcal S_k|+1).
\end{equation}
Adding $|\mathcal S_k|$, \eqref{eq:U1-bound}, and \eqref{eq:U2-bound} proves
\eqref{eq:iteration-complexity}.  This argument uses the Heuristic update and
does not claim the same bound for an arbitrary update rule.
\end{proof}

\begin{theorem}[Evaluation complexity]\label{theorem:complexity}
Let $B:=2\tilde\sigma_{\max}+L$ and
\begin{equation}\label{eq:delta-epsilon}
  \delta_\epsilon:=\eta l\min\left\{1,\frac{\epsilon^2}{B^2}\right\}.
\end{equation}
Under \cref{assump:1,assump:2} and the exact local-minimizer oracle convention
of \cref{prop:model-phase}, ALMTON-Heuristic terminates at an iterate
$x_\epsilon$ satisfying $\|\nabla f(x_\epsilon)\|\le\epsilon$.  Its numbers
of successful and total outer iterations satisfy, respectively,
\begin{align}
  N_{\rm succ}
  &\le\left\lfloor\frac{f(x_0)-f_{\rm low}}{\delta_\epsilon}\right\rfloor+1,
  \label{eq:successful-count}\\
  N_{\rm out}
  &\le(2+q)\left(\left\lfloor
       \frac{f(x_0)-f_{\rm low}}{\delta_\epsilon}\right\rfloor+1\right)+1+q.
  \label{eq:total-count}
\end{align}
In particular, for $0<\epsilon\le B$, with
$\kappa_s:=B^2/(\eta l)$,
\[
  N_{\rm succ}\le
  \left\lfloor\kappa_s\frac{f(x_0)-f_{\rm low}}{\epsilon^2}\right\rfloor+1,
\]
and $N_{\rm out}=\mathcal O(\epsilon^{-2})$.  Conservatively, the method uses
at most $N_{\rm out}+1$ gradient evaluations, $N_{\rm out}$ joint
Hessian/third-derivative evaluations, and $N_{\rm out}+1$ objective
evaluations; all inner solves reuse the derivatives at the same $x_k$.
These are evaluation-oracle counts and do not count the inner \gls{sdp} solves.
\end{theorem}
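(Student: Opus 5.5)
The plan is to combine the lower bound on successful steps with monotone descent, and then convert the count of successes into a count of all outer iterations through \cref{lemma:iteration-bound}.

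\emph{Step 1: a uniform decrease on every successful iteration before termination.} Take a successful iteration $k$ such that the next iterate $x_{k+1}=x_k+s_k$ is still nonterminal, so $\|\nabla f(x_{k+1})\|>\epsilon$. By \cref{lemma:lower-bound-s_k}, with $B=2\tilde\sigma_{\max}+L$ (finite by \cref{lemma:sigma-bound}), we get
\[
\|s_k\|\ge\min\{1,\epsilon/B\}.
\]
Combining this with \eqref{eq:unified-success-decrease} from \Cref{lem:descent} gives
\[
f(x_k)-f(x_{k+1})\ge\eta l\min\{1,\epsilon^2/B^2\}=\delta_\epsilon.
\]
The only successful iteration that may lack this bound is the last one, whose accepted point is terminal. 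This accounts for the ``$+1$'' in \eqref{eq:successful-count}.

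\emph{Step 2: bound the number of successes.} Iterates change only on successes, and $f$ is nonincreasing by \Cref{theorem:monotonicity}. Summing the decreases from Step~1 over all successful iterations except possibly the last gives
\[
(N_{\rm succ}-1)\,\delta_\epsilon\le f(x_0)-f_{\rm low},
\]
which yields \eqref{eq:successful-count} after taking the integer part.

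\emph{Step 3: finite termination and the total count.} Suppose the method never terminated. By \cref{lemma:iteration-bound}, $k+1\le(2+q)|\mathcal S_k|+1+q$, so $|\mathcal S_k|\to\infty$. This contradicts the finite bound on successes from Step~2. Hence the method terminates, and \cref{prop:model-phase} guarantees that each model phase is finite. Applying \eqref{eq:iteration-complexity} at the final index gives \eqref{eq:total-count}.

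\emph{Step 4: the $\epsilon$-dependence and evaluation counts.} For $\epsilon\le B$ we have $\min\{1,\epsilon^2/B^2\}=\epsilon^2/B^2$, so $\delta_\epsilon=\eta l\,\epsilon^2/B^2$. This gives the $\kappa_s$ form, and since $q$ does not depend on $\epsilon$, $N_{\rm out}=\mathcal O(\epsilon^{-2})$. For the evaluation counts:
\begin{itemize}
\item each outer iteration evaluates one gradient at Step~1, plus one final gradient at termination;
\item each nonterminal iteration evaluates one Hessian/third-derivative pair;
\item each iteration evaluates at most one trial objective value, plus $f(x_0)$.
\end{itemize}
Inner solves reuse the derivatives at the same $x_k$.

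The main subtlety is the indexing in Steps 1 and 2. \cref{lemma:lower-bound-s_k} bounds $\|s_k\|$ through the gradient at the trial point, so it controls the step only when the accepted point is nonterminal. The bookkeeping must therefore attach the ``$+1$'' to the final success and not lose a factor elsewhere.
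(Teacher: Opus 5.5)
Your proposal is correct and follows essentially the same route as the paper. It derives the uniform decrease $\delta_\epsilon$ on every success with a nonterminal successor from \cref{lemma:lower-bound-s_k} and \eqref{eq:unified-success-decrease}, then telescopes with one extra terminal success, proves finite termination by contradiction via \cref{lemma:iteration-bound}, and substitutes the success count into \eqref{eq:iteration-complexity}, with evaluation-count bookkeeping ($N_{\rm out}$ counting nonterminal outer iterations) that matches the paper's.
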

\begin{proof}
If a successful iteration has a nonterminal successor, then
$\|\nabla f(x_{k+1})\|>\epsilon$.  Combining
\eqref{eq:unified-success-decrease} with
\Cref{lemma:lower-bound-s_k} yields
\[
  f(x_k)-f(x_{k+1})\ge\delta_\epsilon.
\]
Telescoping and $f\ge f_{\rm low}$ therefore bound the number of successful
iterations whose successor remains nonterminal by
$\lfloor(f(x_0)-f_{\rm low})/\delta_\epsilon\rfloor$.  At most one additional
successful iteration produces the terminal point, which proves
\eqref{eq:successful-count}.  If the algorithm did not terminate,
\Cref{lemma:iteration-bound} would force infinitely many successful
iterations, and the same decrease would contradict lower boundedness.
Finally, substitute \eqref{eq:successful-count} in
\eqref{eq:iteration-complexity} to obtain \eqref{eq:total-count}.  The
evaluation counts follow from Steps~1--3 and from the fact that the
base-Hessian test requires no trial-point derivative evaluation.
\end{proof}

\begin{remark}[Why quadratic LM, and cubic/quartic alternatives]
One may ask whether a higher-degree regularizer would improve the worst-case rate. A cubic term $\sigma\|s\|^3$ can preserve local fidelity better, but it renders the model \emph{nonpolynomial} and breaks the cubic-polynomial \gls{sdp}/\gls{sos} machinery underlying our subproblem solve; a quartic term $\sigma\|s\|^4$ keeps the model polynomial but turns the subproblem into a quartic minimization. A separate study on the tractability of type of cubic regularized model are given in \cite{cartis2025cubic, zhou2025tight}.
We therefore  use quadratic LM regularization to preserve the cubic structure and the unified \gls{sdp} solve. Developing tractable local-minimizer oracles for trust-region, cubic-, or quartic-regularized variants---which could in principle reach the sharper $\mathcal{O}(\epsilon^{-3/2})$ or $\mathcal{O}(\epsilon^{-4/3})$ rates---is a separate research direction.
\end{remark}

\subsection{Proof for Local Convergence}
\begin{theorem}[Local recovery of the unregularized step and cubic convergence]
\label{thm:local-cubic}
Let Assumption~\ref{assump:1} hold with \(p=3\), and let \(x^\ast\) be a
local minimizer such that
\[
H_\ast := \nabla^2 f(x^\ast) \succ 0.
\]
Set
\[
\mu_\ast := \lambda_{\min}(H_\ast)>0,
\]
and choose the curvature floor \(c\in(0,\mu_\ast/2)\). Let
\(\eta\in(0,1)\) and \(l\in(0,c/6]\) be the constants used in the
unregularized acceptance ratio.

Assume that, whenever the unregularized model is tested in the neighborhood
below, the exact model oracle returns the strict local minimizer on the local
branch associated with $x^\ast$.

Then there exist a neighborhood \(\mathcal N\) of \(x^\ast\) and a constant
\(C>0\) such that the following holds. Suppose that \(x_k\in\mathcal N\)
and that the model phase is initialized with \(\sigma_k=0\) (equivalently,
the algorithm first tests the unregularized cubic model at iteration \(k\)),
and suppose that the iteration is nonterminal, so
$\|\nabla f(x_k)\|>\epsilon$.
Then:

\begin{enumerate}
\item The unregularized cubic model
$
m_{f,x_k}(\cdot;0)=\Phi^3_{f,x_k}(\cdot)
$
has a strict local minimizer \(\bar x\). This minimizer passes the curvature
test, is accepted with \(\tilde\sigma_k=0\), and hence the update resets
\(\sigma_{k+1}=0\).

\item The accepted point satisfies
\[
x_{k+1}=\bar x,
\]
so the ALMTON iterate coincides with the unregularized third-order Newton
step. Moreover,
\[
\|x_{k+1}-x^\ast\|
\le C\|x_k-x^\ast\|^3 .
\]
\end{enumerate}

Consequently, once an accepted point $x_k$ enters \(\mathcal N\), every
subsequent nonterminal step remains in the unregularized regime and satisfies
the cubic recursion until the stopping test fires.  The associated untruncated
iteration map converges cubically to $x^\ast$.
\end{theorem}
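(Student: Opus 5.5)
The plan is to construct the local branch of minimizers of the unregularized cubic model by the implicit function theorem, and then to verify the three algorithmic tests in turn: the curvature floor, the model-value inequality, and the ratio test. Write $x=x_k$, $e:=x-x^\ast$, and let the candidate step $s$ solve
\[
F(x,s):=\nabla f(x)+\nabla^2 f(x)s+\tfrac12\nabla^3f(x)[s,s]=0 .
\]
At $(x^\ast,0)$ we have $F=0$ and $\partial_sF=H_\ast\succ0$. Since $f\in\mathcal C^3$, the map $F$ is $\mathcal C^1$ in $(x,s)$. The implicit function theorem then gives a neighborhood $\mathcal N_0$ and a $\mathcal C^1$ map $s(x)$ with $s(x^\ast)=0$ and $\|s(x)\|\le K_1\|e\|$. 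We also obtain a small ball in $s$ in which this zero of $F$ is unique; this is the ``local branch'' named in the oracle hypothesis.

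First, along this branch the model Hessian is
\[
M_k(s;0)=\nabla^2f(x)+\nabla^3f(x)[s].
\]
By continuity it differs from $H_\ast$ by $o(1)$, so after shrinking the neighborhood we have $M_k\succeq(\mu_\ast/2)I\succ0$. By the characterization from \cite{ahmadi2022complexity}, the point $\bar x=x+s(x)$ is then a strict local minimizer of $\Phi^3_{f,x}$. Next, the curvature test concerns $A_k(0)=\nabla^2f(x)$. Since $c<\mu_\ast/2$, we get $\lambda_{\min}(A_k(0))\ge\mu_\ast/2>c$ on a small enough neighborhood. The model-value test then follows from \cref{lem:exact-identity}, which gives
\[
f(x)-\Phi^3_{f,x}(\bar x)\ge(c/6)\|s\|^2>0,
\]
because $s\neq0$ at a nonterminal point. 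The Heuristic phase therefore exits immediately with $\tilde\sigma_k=0$.

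Second, we prove the cubic error bound. Set $\bar e:=\bar x-x^\ast$. Then $\bar x$ satisfies $\nabla\Phi^3_{f,x}(\bar x)=0$, and \cref{cor:p3} gives
\[
\|\nabla f(\bar x)\|\le L\|s\|^3\le L(K_1+1)^3\|e\|^3 .
\]
Hence $\|\bar e\|=O(\|e\|)$ and $\bar x$ stays near $x^\ast$. Because $\nabla f(x^\ast)=0$ and $\nabla^2 f\succeq(\mu_\ast/2)I$ on a ball around $x^\ast$, the mean value theorem yields
\[
\|\nabla f(\bar x)\|\ge(\mu_\ast/2)\|\bar e\| .
\]
Combining the two bounds gives $\|\bar e\|\le C\|e\|^3$ with $C=2L(K_1+1)^3/\mu_\ast$.

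Third, and this is the main obstacle, we show acceptance. The ratio uses $l\|s\|^2$, so we need $f(x)-f(\bar x)\ge\eta l\|s\|^2$. Start from
\[
f(x)-f(\bar x)=\bigl[f(x)-\Phi^3_{f,x}(\bar x)\bigr]-R,\qquad |R|\le (L/3)\|s\|^4 .
\]
The two-sided remainder bound follows from the Lipschitz third derivative, as in \cref{lemma:taylor_model}. The bracket is at least $s^\top(\tfrac16A_k+\tfrac13M_k)s\ge(\mu_\ast/4)\|s\|^2$, since both $A_k$ and $M_k$ are $\succeq(\mu_\ast/2)I$. So
\[
\rho_k\ge\frac{\mu_\ast/4-(L/3)\|s\|^2}{l},
\]
and the right-hand side is at least $1>\eta$ once $\|s\|$ is small. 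This uses $l\le c/6<\mu_\ast/12$. We secure small $\|s\|$ by shrinking the neighborhood, via $\|s\|\le K_1\|e\|$.

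Finally, choose $\mathcal N$ to be a ball of radius $r$ small enough for all of the above, and also with $Cr^2\le1$. Then $\|x_{k+1}-x^\ast\|\le Cr^2\|x_k-x^\ast\|\le\|x_k-x^\ast\|$, so $x_{k+1}\in\mathcal N$. The reset $\sigma_{k+1}=0$ holds, so the argument repeats by induction. This gives the cubic recursion for every subsequent nonterminal step, and the untruncated map converges cubically.
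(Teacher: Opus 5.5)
Your proof is correct in substance and follows the paper's skeleton: a local branch of cubic Newton points, the curvature and model-value tests, the ratio test, the rate from $\tfrac{\mu_\ast}{2}\|\bar x-x^\ast\|\le\|\nabla f(\bar x)\|\le L\|s\|^3$, and induction. Two ingredients are handled differently.

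\textbf{Constructing the branch.} The paper does not invoke the implicit function theorem. It runs an explicit contraction $\Psi_x(s)=-H^{-1}(g+\tfrac12T[s,s])$ on the ball of radius $r_x=4\|\nabla f(x)\|/\mu_\ast$. This gives existence, uniqueness in that ball, the bound $\|s(x)\|\le 4G_2\|x-x^\ast\|/\mu_\ast$, and the explicit constant $C_0=(2L/\mu_\ast)(4G_2/\mu_\ast)^3$. It then identifies the oracle output through the cubic trichotomy: a strict local minimizer of a cubic is its only local minimizer. So the oracle's output is pinned down without appealing to the branch hypothesis.

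Your shortcut contains a false claim. Under Assumption~\ref{assump:1} with $p=3$, $F$ is \emph{not} $\mathcal C^1$ in $(x,s)$, because $\partial_x$ of $\tfrac12\nabla^3 f(x)[s,s]$ requires $\nabla^4 f$. The step is easy to repair:
\begin{itemize}
\item use the parametric version of the theorem, which needs only continuity of $F$ and joint continuity of $\partial_s F=\nabla^2 f(x)+\nabla^3 f(x)[s]$ (this is what the paper's contraction proves by hand);
\item obtain $\|s(x)\|\le K_1\|x-x^\ast\|$ from $s(x)=-\bigl(\nabla^2 f(x)+\tfrac12\nabla^3 f(x)[s(x)]\bigr)^{-1}\nabla f(x)$, not from differentiability of $s(\cdot)$.
\end{itemize}

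\textbf{Acceptance.} The paper uses that the cubic's Hessian is affine along the segment from $\bar x$ to $x_k$. Both endpoint bounds $\succeq cI$ therefore propagate along it, and the integral form of Taylor's theorem gives $m_{f,x_k}(x_k;0)-m_{f,x_k}(\bar x;0)\ge\tfrac c2\|s_k\|^2$. It then closes with $\eta l<c/2$. Your use of \cref{lem:exact-identity} with $A_k,M_k\succeq(\mu_\ast/2)I$ is cleaner. It gives $\tfrac{\mu_\ast}{4}\|s_k\|^2$ directly, and both the model-value test and the ratio test come from one identity. Only the upper remainder bound of \cref{cor:p3} is needed here, not a two-sided one.

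\textbf{A small point at the end.} The condition $Cr^2\le 1$ gives only non-expansion. To conclude convergence of the untruncated recursion, you need either $\mathcal N$ open (so that $C\|x_k-x^\ast\|^2<1$) or a strict margin such as the paper's $Cr^2\le\tfrac12$.
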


\begin{proof}
Let
$
\mu_\ast:=\lambda_{\min}(\nabla^2 f(x^\ast))>0 .
$
Since \(\nabla^2 f\) is continuous and \(c<\mu_\ast/2\), there exists
\(r_0>0\) such that
\[
\nabla^2 f(x)\succeq \frac{\mu_\ast}{2}I \succ cI,
\qquad
\forall x\in B_{r_0}(x^\ast).
\]
In particular, \(f\) is strongly convex on \(B_{r_0}(x^\ast)\).
We give a local construction of the cubic Newton point.  On a possibly
smaller ball, let $G_2$ and $G_3$ be upper bounds for
$\|\nabla^2f\|_{[2]}$ and $\|\nabla^3f\|_{[3]}$.  For $x$ near $x^\ast$, put
$g:=\nabla f(x)$, $H:=\nabla^2f(x)$, $T:=\nabla^3f(x)$, and
$r_x:=4\|g\|/\mu_\ast$.  On $\{s:\|s\|\le r_x\}$ consider
\[
  \Psi_x(s):=-H^{-1}\!\left(g+\tfrac12T[s,s]\right).
\]
Since $\|H^{-1}\|\le2/\mu_\ast$, for $x$ sufficiently close to $x^\ast$
the map sends this ball into itself and is a contraction:
\[
  \|\Psi_x(s)\|\le\frac{r_x}{2}+\frac{G_3}{\mu_\ast}r_x^2\le r_x,
  \qquad
  \|\Psi_x(s)-\Psi_x(t)\|
  \le\frac{2G_3r_x}{\mu_\ast}\|s-t\|<\|s-t\|.
\]
It therefore has a unique fixed point $s(x)$ in this ball.  The fixed-point
equation is exactly $\nabla\Phi^3_{f,x}(x+s(x))=0$.  Shrinking the neighborhood
so that $\|x-x^\ast\|+r_x<r_0$ ensures that the entire step ball lies in
$B_{r_0}(x^\ast)$.  Shrinking it again so that
$G_3r_x<\mu_\ast/2-c$ gives
\[
  \nabla^2\Phi^3_{f,x}(x+s(x))=H+T[s(x)]\succ cI,
\]
so $N_3(x):=x+s(x)$ is a strict local minimizer.
By the cubic-minimizer trichotomy recalled after \eqref{eq:cubic-poly}, once a
cubic polynomial has a strict local minimizer it has no other local minimizer.
Thus the exact oracle returns $N_3(x)$.

Moreover, $\|g\|\le G_2\|x-x^\ast\|$ and $\|s(x)\|\le r_x$, while the
gradient Taylor remainder and local strong convexity give
\[
  \frac{\mu_\ast}{2}\|N_3(x)-x^\ast\|
  \le\|\nabla f(N_3(x))\|
  \le L\|s(x)\|^3.
\]
Consequently, for
$C_0:=(2L/\mu_\ast)(4G_2/\mu_\ast)^3$ (or any larger positive constant),
\[
  \|N_3(x)-x^\ast\|\le C_0\|x-x^\ast\|^3.
\]
We now shrink a radius \(r\le r_0\) sufficiently and set
\[
\mathcal N := B_r(x^\ast).
\]
For \(x_k\in\mathcal N\), define
\[
\bar x := N_3(x_k),\qquad s_k:=\bar x-x_k .
\]
Then \(\bar x\to x^\ast\), \(s_k\to0\), and
\(\bar x\in B_{r_0}(x^\ast)\) after shrinking $r$.  The algorithmic curvature
test is immediate from \(A_k(0)=\nabla^2f(x_k)\succ cI\).  The construction
above also gives the endpoint model Hessian
$M_k(s_k;0)=\nabla_x^2m_{f,x_k}(\bar x;0)\succ cI$.

We now prove that the unregularized step is accepted. Since \(\bar x\) is a
strict local minimizer of the cubic model, we have
$
\nabla_x m_{f,x_k}(\bar x;0)=0 .
$
Because the Hessian of a cubic polynomial varies affinely along line segments,
for \(d:=x_k-\bar x=-s_k\) and \(t\in[0,1]\),
\[
\nabla_x^2 m_{f,x_k}(\bar x+td;0)
=
(1-t)\nabla_x^2 m_{f,x_k}(\bar x;0)
+
t\nabla_x^2 m_{f,x_k}(x_k;0).
\]
After shrinking \(\mathcal N\) if necessary, both endpoint Hessians are
bounded below by \(cI\), hence the whole segment satisfies
$
\nabla_x^2 m_{f,x_k}(\bar x+td;0)\succeq cI,
t\in[0,1].
$
Using Taylor's formula along this segment and the stationarity of \(\bar x\)
for the model,
\[
m_{f,x_k}(x_k;0)-m_{f,x_k}(\bar x;0)
=
\int_0^1 (1-t)\,
d^\top
\nabla_x^2 m_{f,x_k}(\bar x+td;0)
d\,dt .
\]
Therefore,
\[
m_{f,x_k}(x_k;0)-m_{f,x_k}(\bar x;0)
\ge
\frac{c}{2}\|s_k\|^2 .
\]
In particular, the model-value condition in the heuristic model phase is also
satisfied whenever \(s_k\neq0\). If \(s_k=0\), then \(x_k\) is already the local
stationary point in this neighborhood, hence \(x_k=x^\ast\), and the algorithm
would terminate.

By the third-order Taylor remainder bound from Assumption~\ref{assump:1}, there
is a constant \(K\ge0\), for example \(K=L/3\) under the convention of
Corollary~\ref{cor:p3}, such that
\[
f(\bar x)
\le
m_{f,x_k}(\bar x;0)+K\|s_k\|^4 .
\]
Since \(m_{f,x_k}(x_k;0)=f(x_k)\), we get
\[
\begin{aligned}
f(x_k)-f(\bar x)
&\ge
m_{f,x_k}(x_k;0)-m_{f,x_k}(\bar x;0)
-
K\|s_k\|^4  \ge
\left(\frac{c}{2}-K\|s_k\|^2\right)\|s_k\|^2 .
\end{aligned}
\]
Because \(s_k\to0\) as \(x_k\to x^\ast\), and because
$
\eta l < \frac{c}{2},
$
we may shrink \(\mathcal N\) once more so that
$
K\|s_k\|^2 \le \frac{c}{2}-\eta l .
$
Hence
$
f(x_k)-f(\bar x)
\ge
\eta l\|s_k\|^2 .
$
For the unregularized ratio used by the algorithm,
\[
\rho_k
=
\frac{f(x_k)-f(\bar x)}{l\|s_k\|^2},
\]
this gives
$
\rho_k\ge \eta .
$
Thus the trial point is accepted with \(\tilde\sigma_k=0\), so
\[
x_{k+1}=\bar x,
\qquad
\sigma_{k+1}=0 .
\]
Finally, since the accepted point is exactly the unregularized third-order Newton
step,
the local cubic convergence estimate for the unregularized method gives
\[
\|x_{k+1}-x^\ast\|
=
\|N_3(x_k)-x^\ast\|
\le
C_0\|x_k-x^\ast\|^3 .
\]
Taking \(C=C_0\) proves the claimed cubic rate. Shrinking \(\mathcal N\) once more
so that \(C r^2\le 1/2\) gives
$\|x_{k+1}-x^\ast\|\le\tfrac12\|x_k-x^\ast\|$; hence the accepted iterates
remain in \(\mathcal N\) and the untruncated recursion converges to $x^\ast$.
Since every successful iteration resets \(\sigma_{k+1}=0\), the same argument
applies inductively to all subsequent nonterminal iterates.
\end{proof}
\begin{remark}[{On global versus local guarantees}]
The global result guarantees an approximate stationary point, but does not
by itself guarantee entry into the neighborhood of a prescribed minimizer.
Conditionally, if a successful iterate enters the neighborhood in
\Cref{thm:local-cubic}, every subsequent nonterminal step recovers the
unregularized third-order Newton recursion and its local cubic rate.
\end{remark}

\section{Numerical Experiments}\label{sec:numerical_experiments}
We now evaluate the numerical performance of the proposed \gls{almton} algorithm.
The implementation uses Python~3.12.7.  Experiments were conducted on a macOS
machine with an Apple M2 Pro CPU (10 cores, 3.2\,GHz) and 16\,GB of RAM.
The \gls{sdp} subproblems arising in \gls{almton} are formulated in
\verb|CVXPY| and passed to \verb|MOSEK| \cite{mosek} for numerical solution.
If numerical difficulties arise, the implementation may retry the same
mathematical \gls{sdp} after an equivalent positive rescaling of its data.  A
candidate step is mapped back to the original scale and accepted only if it
passes the prescribed residual and strict-local-minimizer checks on the
original, unscaled cubic model.

The comparison set is chosen separately for each experiment and is specified
with the corresponding protocol.  In particular, the frozen Experiment~3 set
contains only implementations covered by its current correctness and
provenance audit; it is a focused, rather than exhaustive, comparison.
Performance is reported using audited target rates, oracle counts, iteration
counts, and wall-clock diagnostics.  The AR$2$ and AR$3$ Simple comparators use
their validated non-interpolation implementations.  Experiments~1 and~3
compare the Simple and Heuristic variants of ALMTON under the documented
Stage-2 numerical policy.  Experiment~2 fixes the Simple outer strategy and
compares its Stage-1 and Stage-2 numerical policies.  The terminal gradient
tolerance is $10^{-8}$ in Experiments~1 and~3 and $10^{-6}$ in Experiment~2.
Thus ``strategy'' refers to trial grouping (Simple versus Heuristic), whereas
``policy'' refers to the finite-precision $\sigma$ schedule (Stage~1 versus
Stage~2).  The convergence theorems concern the printed Heuristic update under
the exact-oracle convention; the Simple-policy rows are empirical variants.

By default, the first \verb|MOSEK| attempt uses its native stopping tolerances; after a numerical or certificate failure, the same mathematical \gls{sdp} at the relevant scale may receive one auditable refinement attempt with tolerance $10^{-9}$.
The primary and refinement tolerances can be overridden or the refinement disabled explicitly, and the effective tolerance is recorded for every physical solve.
In either case, solver status alone is insufficient for acceptance: every returned step must pass the implementation's original-scale post-solve certificate.
The values and figures for Experiments~1 and~3 below were regenerated under
their documented common terminal tests and recorded CVXPY/MOSEK policies.
Experiment~1 is reported as an extended deterministic comparison;
its comparator contract is fixed for the results reported here.  The
Experiment~2 paper protocol is complete and contains 510 deterministic cases,
3,060 method--case runs, fresh terminal audits, and an explicit administrative
wall-clock censoring audit.  The Experiment~3 protocol-v5 record is likewise
complete and passed its source-provenance and semantic-validation checks.
The numerical evaluation is organized into three experiments:
\begin{enumerate}[leftmargin=1.5em]
  \item \textbf{Experiment 1} (\cref{subsec:experiment_1:robustness}) isolates
  initialization robustness on five two-dimensional nonconvex landscapes.  It
  compares adaptive LM globalization with UTON using common terminal audits
  and Dolan--Mor\'{e} performance profiles.
  \item \textbf{Experiment 2} (\cref{subsec:experiment_2:high-dimension})
  separates a controlled low-rank third-order scaling track from a generalized
  Rosenbrock stress track.  The tested dimensions are
  $n\in\{5,10,20,30,50,75,100\}$.
  \item \textbf{Experiment 3} (\cref{subsec:experiment_3:high-order}) separates
  a fixed-start trajectory diagnostic from a deterministic multi-start basin
  audit on the Hairpin Turn geometry.
\end{enumerate}
\paragraph{Preview of results}
Experiment~1 provides the clearest empirical evidence for the intended role of
the LM mechanism: relative to UTON, it substantially enlarges the set of tested
initial points from which the cubic-model method returns an audited stationary
point.  The same experiment also makes the computational price explicit---the
SDP-based methods are not competitive with AR$2$-Simple in wall-clock time.
In Experiment~2, both ALMTON numerical policies reach the known target on all
306 controlled LR3 cases, matching AR$3$-Interp and Newton--CG; the Stage-2
geometric backstop reduces ALMTON's recorded outer iterations and SDP requests
but does not remove the increasing cost of the SDP pipeline.  On generalized
Rosenbrock, Stage~2 improves substantially over Stage~1 but remains below the
recorded target rates of AR$3$-Interp and Strong-Newton, with high-dimensional
runs exposing administrative censoring and stiffness sensitivity.
Experiment~3 provides a complementary geometry-specific result.  On the
formal Hairpin grid, AR$2$-Simple reaches the target from all $400$ starts,
the two ALMTON variants reach it from the same $333$ starts, and AR$3$-Simple
reaches it from $240$.  Thus ALMTON has a higher audited target count than the
tested AR$3$ implementation on this geometry, but it neither matches the basin nor the
recorded cost of AR$2$-Simple.  The fixed-start paths illustrate how the
methods traverse the turn; they do not identify derivative order as the cause
of a trajectory.
Accordingly, we interpret the experiments as evidence for a stable
globalization of UTON on selected nonconvex geometries, not as a claim that
third-order information is uniformly preferable to mature second-order
regularization.

\subsection{Experiment 1: Basin Robustness on Two-Dimensional Nonconvex Landscapes}
\label{subsec:experiment_1:robustness}

\paragraph{Question and scope}
This experiment isolates the principal numerical question motivating the
adaptive LM mechanism: can it stabilize the unregularized cubic Taylor model
over substantial changes in the initial point?  An unregularized third-order
model need not possess a strict local minimizer, and UTON~\cite{silina2022unregularized} can therefore fail
before producing a usable trial step.  ALMTON instead increases the quadratic
regularization only when the current cubic model is not sufficiently well
posed or its trial is not acceptable.  We evaluate whether this mechanism
enlarges the set of initial points from which the method reaches an audited
stationary point on selected low-dimensional nonconvex geometries.  The
experiment is designed to assess \emph{basin robustness}; it is not designed to
establish state-of-the-art running time or a uniform advantage over
second-order regularization.

\paragraph{Problems and initialization protocol}
We consider Beale, Bohachevsky, Himmelblau, and McCormick, the four objectives
used in the numerical study of Silina and Zhang
\cite[Sec.~4.1]{silina2022unregularized}, and augment this paper-facing set with
the Styblinski--Tang instance available in their companion implementation
\cite{zhang2023thirdorder}.  Their definitions and initialization rectangles
are given in Appendix~\ref{app:experiment-1-details},
\Cref{tab:exp1-benchmarks}.
For each function, every method is initialized at the same endpoint-inclusive
uniform $30\times30$ Cartesian grid over the corresponding rectangle.  Thus,
each method is applied to
$5\times900=4{,}500$ deterministic function--initialization pairs, and the
seven-method comparison comprises 31,500 runs.  No initial point is selected
conditional on solver behavior, and no random replication is used.  The
rectangles define \emph{initialization sets only}; all algorithms are
unconstrained and no projection is performed.

\paragraph{Methods, parameters, and common terminal audit}
The compact comparison comprises fixed-step GD with $\alpha=0.01$ and no line
search, Newton--CG with the exact Hessian, UTON
\cite{silina2022unregularized}, ALMTON-Simple, ALMTON-Heuristic,
AR$2$-Simple \cite{cartis2011adaptive}, and AR$3$-Simple
\cite{cartis2024efficient}.  The two AR$p$ baselines use the validated
non-interpolation Simple updates.  Both ALMTON variants use
$(c,l,\eta,\gamma)=(0.1,0.01,0.1,2)$; AR$2$ uses
$(\eta_1,\eta_2,\gamma_1,\gamma_2,\sigma_0,\sigma_{\min})
=(0.01,0.95,0.5,3,1,10^{-8})$; and AR$3$ uses the corresponding
five parameters $(0.01,0.95,0.5,3,10^{-8})$ with an inner AR$2$ solve at
tolerance $10^{-9}$.  The extended compact contract fixes one GD step size and
does not include the exploratory GD($0.05$) or safeguarded-Newton variants; it
is therefore a focused rather than exhaustive comparator set.  Complete
settings are collected in \Cref{tab:exp1-settings}.

Every run is limited to 100 reported outer iterations or model phases.  To
remove method-dependent stopping conventions, we ignore the native success
flag and apply a common terminal audit: the final objective and gradient are
recomputed, and a run is successful precisely when both are finite and
$\|\nabla f(x_{\mathrm{final}})\|\le10^{-8}$.  We also recompute the Hessian;
all terminal points declared successful in this experiment satisfy the
approximate second-order necessary condition
$\lambda_{\min}(\nabla^2f(x_{\mathrm{final}}))\ge-10^{-6}$.  This check does
not by itself certify global optimality.

\paragraph{Numerical safeguards and data reuse}
Both ALMTON variants use the same Stage-2 geometric backstop for updating
$\sigma$.  When the accepted iterate is unchanged, the implementation caches
the Taylor data $(f,g,H,T)$ and reuses the calculation of the smallest Hessian
eigenvalue across regularization trials.  It evaluates model reductions in a
cancellation-resistant form and performs a fresh-gradient safeguard before
accepting a terminal trial.  The implementation also enables CVXPY's DPP
(disciplined parameterized programming) workspace.  Within a worker, it reuses
a fixed conic template whenever the SDP structure
is unchanged, thereby avoiding repeated model-graph construction and
canonicalization.  These
implementation choices limit repeated derivative formation and temporary
model allocation.

\paragraph{Performance measures}
In addition to the aggregate counts in \Cref{tab:exp1-overall}, we use
Dolan--Mor\'{e} performance profiles \cite{dolan2002benchmarking}.  Let
$\mathcal P$ be the 4,500 prescribed function--initialization pairs, let
$m_{p,s}$ be either the reported iteration count or the measured per-worker
wall time for method $s$ on $p$, and set $m_{p,s}=+\infty$ after an audited
failure.  Since at least one method succeeds on every $p\in\mathcal P$, the
ratios and profiles are
\begin{equation}
  r_{p,s}:=\frac{m_{p,s}}{\min_{s'}m_{p,s'}},
  \qquad
  \rho_s(\tau):=\frac{1}{|\mathcal P|}
  \bigl|\{p\in\mathcal P:r_{p,s}\le\tau\}\bigr|.
\end{equation}
Thus $\rho_s(1)$ is the fraction of prescribed instances on which $s$ attains
the best recorded metric, whereas the limiting plateau is its audited success
fraction.  Because the grid is deterministic and each wall time is recorded
once, $\rho_s$ is an empirical fraction over this test set, not a sampling
probability or a statement of statistical significance.  The conditional
summaries in \Cref{tab:exp1-overall} use each method's own successful subset
and may therefore exhibit survivor-set bias; the profiles retain all
prescribed instances by assigning audited failures infinite cost.

\begin{table}[t]
  \centering
  \caption{Aggregate Experiment~1 results over 4,500 starts per method.
  Iteration/phase and worker-time entries are medians $[Q_1,Q_3]$ over audited
  successes.  Counts are method-specific and times include the terminal audit,
  so both are descriptive.}
  \label{tab:exp1-overall}
  {\small
  \renewcommand{\arraystretch}{1.08}
  \setlength{\tabcolsep}{3.5pt}
  \begin{tabular}{@{}lrrr@{}}
    \toprule
    Method & \shortstack{Audited success\\$n$ (\%)}
      & \shortstack{Reported iter./phases\\median $[Q_1,Q_3]$}
      & \shortstack{Worker time (ms)\\median $[Q_1,Q_3]$} \\
    \midrule
    GD ($\alpha=0.01$) & 2,695 (59.89) & $62\ [57,69]$ & $0.36\ [0.31,0.40]$ \\
    Newton--CG         & 4,238 (94.18) & $8\ [7,9]$ & $0.39\ [0.31,0.71]$ \\
    UTON               & 2,494 (55.42) & $3\ [3,4]$ & $53.75\ [45.75,65.67]$ \\
    AR$2$-Simple       & 4,500 (100.00) & $7\ [5,9]$ & $3.57\ [2.46,5.32]$ \\
    AR$3$-Simple       & 3,435 (76.33) & $14\ [6,21]$ & $39.45\ [21.85,92.24]$ \\
    \midrule
    \textbf{ALMTON-Simple}    & 4,496 (99.91) & $6\ [4,11]$ & $94.16\ [62.55,172.10]$ \\
    \textbf{ALMTON-Heuristic} & 4,496 (99.91) & $4\ [3,4]$ & $92.75\ [62.91,170.35]$ \\
    \bottomrule
  \end{tabular}}
\end{table}

\begin{figure}[t]
  \centering
  \includegraphics[width=0.96\linewidth]{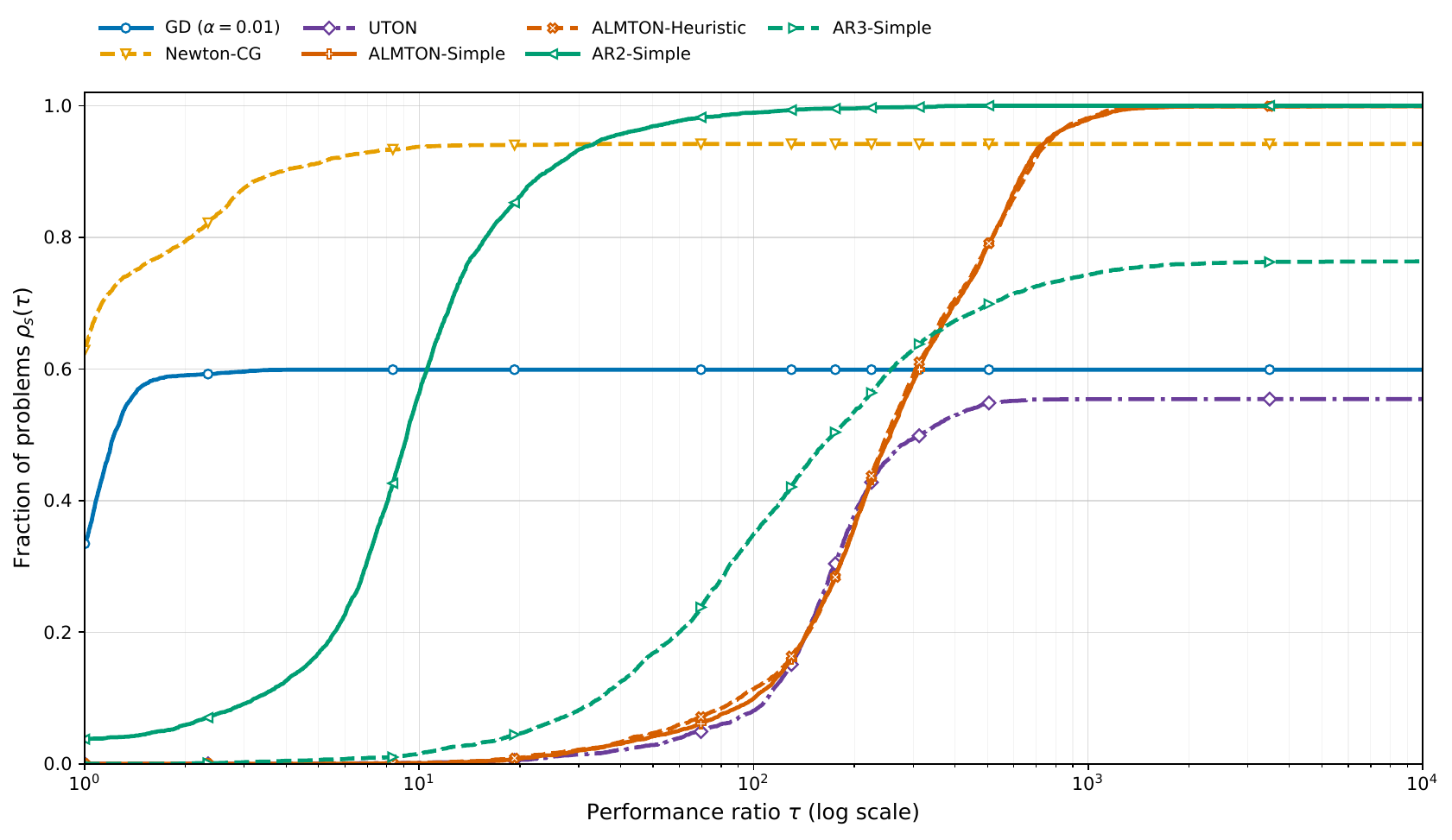}
  \caption{Wall-time Dolan--Mor\'{e} profile over the 4,500 Experiment~1
  starts.  Times include the terminal audit; each start was timed once using
  six workers and single-threaded conic solves.  The plateaus equal the audited
  success fractions.}
  \label{fig:perf_profile_time}
\end{figure}

\begin{figure}[t]
  \centering
  \includegraphics[width=0.96\linewidth]{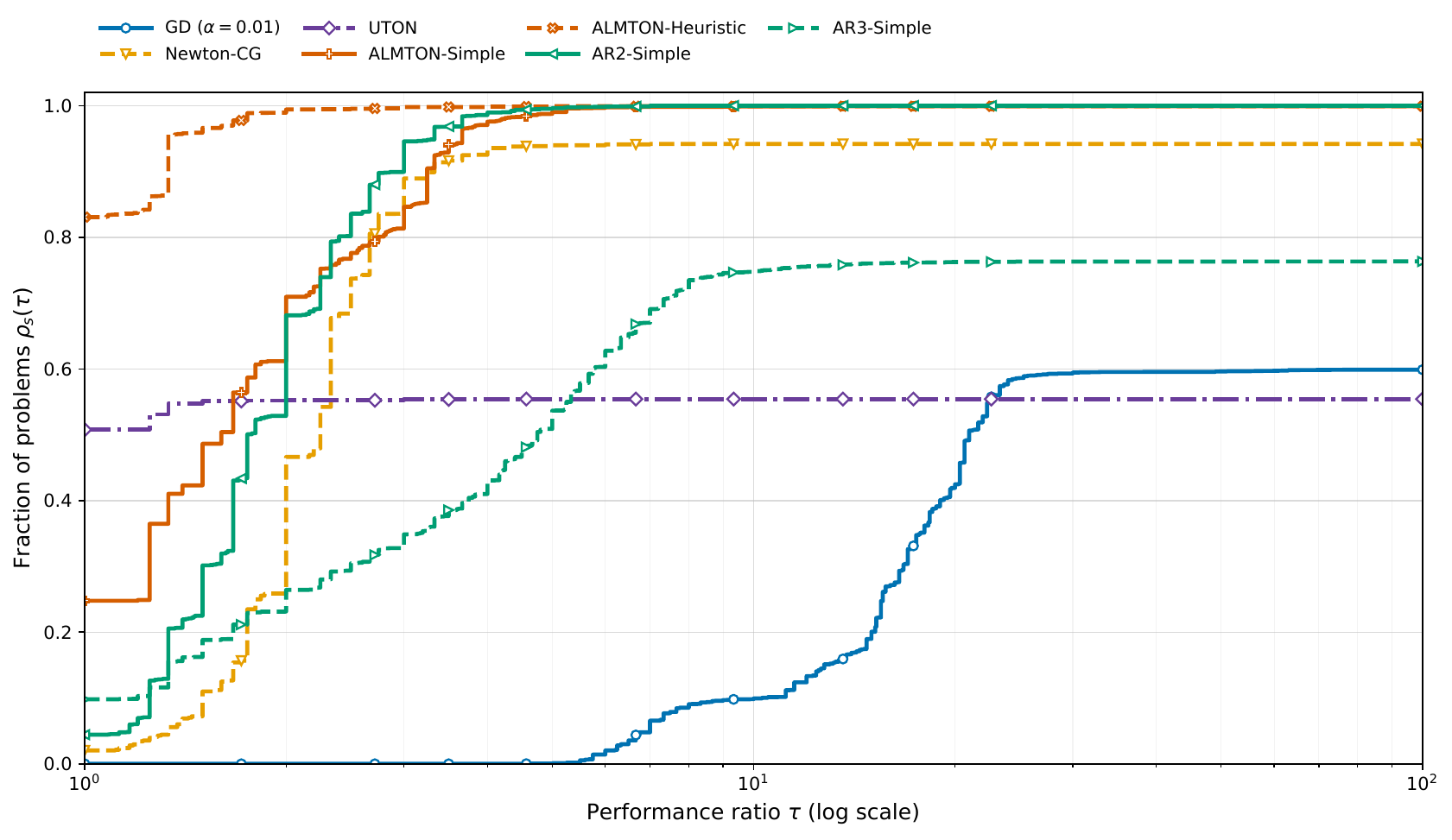}
  \caption{Iteration/phase Dolan--Mor\'{e} profile over the same 4,500 starts.
  Counts are method-specific and do not normalize inner work.}
  \label{fig:perf_profile_iter}
\end{figure}

\paragraph{Results}
Both ALMTON variants satisfy the common terminal criterion on 4,496 of the
4,500 starts (99.91\%), whereas UTON succeeds on 2,494 starts (55.42\%).  The
adaptive globalization therefore gives a net increase of 2,002 successful
starts, or 44.49 percentage points, and leaves four audited failures rather
than 2,006.
This comparison provides the central evidence from Experiment~1: the adaptive
LM mechanism stabilizes a third-order Newton construction whose unregularized
cubic model is fragile under changes of initialization.  The four ALMTON
failures all occur on Bohachevsky and arise from strict SDP residual rejection,
rather than from a change of conic solver.

The same data delimit this conclusion.  AR$2$-Simple succeeds on every start
and is substantially faster: among audited successes, its median per-worker
time, including the common terminal audit, is 3.57 ms, compared with 94.16 ms
and 92.75 ms for ALMTON-Simple and ALMTON-Heuristic, respectively.  In
\Cref{fig:perf_profile_time}, ALMTON is
never the fastest method and reaches its 99.91\% robustness plateau only at
large time ratios.  Hence the experiment does not support a claim of
state-of-the-art performance or general Pareto dominance for ALMTON; it
supports a robustness--cost tradeoff relative to
UTON.

Conditional on audited success, ALMTON-Simple and ALMTON-Heuristic report
medians of six and four outer stages, respectively, compared with fourteen for
the tested AR$3$-Simple implementation.  These are modest outer-stage counts,
but they are not equivalent measures of work.

More generally, Experiment~1 concerns five two-dimensional landscapes and does not establish
robustness for arbitrary nonconvex geometry.  Functionwise counts, benchmark
definitions, and the full numerical settings are given in
\Cref{tab:exp1-by-function,tab:exp1-benchmarks,tab:exp1-settings} of
Appendix~\ref{app:experiment-1-details}.

\subsection{Experiment 2: Controlled Scaling and Rosenbrock Stress Test}
\label{subsec:experiment_2:high-dimension}

Experiment~1 isolates robustness with respect to the initial point in two
dimensions.  Here we ask two different questions: how the present SDP-based
realization behaves as ambient dimension and active third-order rank increase,
and where its numerical policies cease to be effective on a stiff nonconvex
valley.  We therefore use two deterministic tracks.  The controlled LR3 track
separates ambient dimension, active third-order rank, coordinate
representation, and local nonconvexity; the generalized Rosenbrock track is a
stress test for stiffness, initialization, and administrative wall-clock
censoring.

\paragraph{Common protocol and terminal audit}
For every case, we run ALMTON-S1, ALMTON-S2, AR$3$-Interp
\cite{cartis2024efficient}, Strong-Newton,\footnote{Strong-Newton denotes our
safeguarded second-order Newton comparator.  It uses backtracking line search
and, when negative curvature is detected, incorporates a corresponding Hessian
eigenvector into the search direction to move away from saddle points.}
Newton--CG, and L-BFGS
\cite{nocedal2006numerical} from the same initial point, with a budget of 1,000
reported outer iterations.  S1 and S2 use the same ALMTON-Simple algorithm,
parameters, and safeguards; they differ only in the regularization update:
S1 includes $\alpha_{LM}$ immediately, whereas S2 starts from the smallest
curvature-repair value and increases $\sigma$ geometrically.  Their comparison
therefore isolates this numerical policy rather than two distinct algorithms.
The common safeguards and core ALMTON parameters are documented in
Experiment~1 and are not repeated here.

Every terminal point is re-evaluated using fresh objective, gradient, and
Hessian calls.  A run is called stationary if $f(x)$ and $\nabla f(x)$ are
finite and $\|\nabla f(x)\|\le 10^{-6}$.  Since both tracks have a known pair
$(x_\star,f_\star)$, the stricter global-target event additionally requires
\begin{equation}
  f(x)-f_\star
  \le 10^{-8}\max\{1,|f(x_0)-f_\star|\},
  \qquad
  \frac{\|x-x_\star\|}{\sqrt n}\le 10^{-4}.
  \label{eq:exp2-global-target}
\end{equation}
Headline target counts use an administrative budget of 300 seconds per task.
Censored runs count as failures; because ALMTON checks this budget at outer
iteration boundaries, a small recorded overshoot is possible.  No run first
attained the target after the limit.  The completed design contains 510
deterministic cases and 3,060 method--case runs: 306 cases per method in the LR3
track and 204 per method in the Rosenbrock track.  The record combines serial
execution with a six-worker parallel backfill.  Consequently, timings are
reported as descriptive implementation diagnostics and not as controlled
sampling distributions.

For the Rosenbrock outcome audit, a stationary nonglobal endpoint is called a
strict local minimum when
$\lambda_{\min}(\nabla^2f(x))>10^{-8}\max\{1,\|\nabla^2f(x)\|_2\}$.
Administrative censoring is recorded separately and is a subset of
nonstationary failures.

\paragraph{Track I: controlled low-rank third-order family}
Let $U\in\mathbb R^{n\times p}$ have orthonormal columns, set $y=U^\top x$, and
write $x_\perp=(I-UU^\top)x$.  We consider
\begin{equation}
  F_{n,p,\rho,U}(x)
  =\frac12\|x_\perp\|^2
   +\sum_{j=1}^{p}
    \left(\frac12y_j^2+\frac{\tau_j}{6}y_j^3+\frac14y_j^4\right),
  \qquad
  \tau_j=(-1)^{j-1}\sqrt{\rho}.
  \label{eq:exp2-lr3}
\end{equation}
The construction has $x_\star=0$ and $f_\star=0$.  We use
$n\in\{5,10,20,30,50,75,100\}$ and
$p\in\{1,2,4,8,n\}$, clipping to $p\le n$ and removing duplicates.  Identity,
block-orthogonal, and dense Haar coordinate systems control representation.
The value $\rho=4$ gives a strongly convex control initialized at
$y_0=0.6\operatorname{sign}(\tau)$; $\rho=14$ additionally admits the
nonconvex start $y_0=-0.6\operatorname{sign}(\tau)$.  Since $\rho<16$, every
case has the unique stationary point and global minimizer $x_\star=0$.  There
are 36 cases at $n=5$ and 45 at each remaining dimension.

\begin{table}[tbp]
  \centering
  \caption{Controlled LR3 summary.  Panel~A gives target attainment and
  all-run medians of iteration count $I$ and worker time $T$ over 306 cases.
  Panel~B gives selected nonconvex $\rho=14$ scaling endpoints: $R$ is the
  coincident median iteration/request/attempt count; the last column gives the
  S1 and S2 SDP-pipeline time shares.  Timings are descriptive.}
  \label{tab:exp2-lr3-targets}
  {\small
  \renewcommand{\arraystretch}{1.08}
  \setlength{\tabcolsep}{5pt}
  \begin{minipage}[t]{0.42\linewidth}
  \vspace{0pt}
  \centering
  \resizebox{\linewidth}{!}{%
  \begin{tabular}{@{}lrrr@{}}
    \toprule
    \multicolumn{4}{@{}l}{\textit{Panel A: all 306 controlled cases}}\\[-1pt]
    Method & Target & Med. $I$ & Med. $T$ (s) \\
    \midrule
    \textbf{ALMTON-S1} & $306/306$ & 13 & 2.409 \\
    \textbf{ALMTON-S2} & $306/306$ & 6 & 0.833 \\
    \midrule
    AR$3$-Interp  & $306/306$ & 3 & 0.0122 \\
    Strong-Newton & $264/306$ & 5 & 0.00115 \\
    Newton--CG    & $306/306$ & 6 & 0.00108 \\
    L-BFGS        & $250/306$ & 6 & 0.000945 \\
    \bottomrule
  \end{tabular}}
  \end{minipage}\hfill
  \begin{minipage}[t]{0.54\linewidth}
  \vspace{0pt}
  \centering
  \resizebox{\linewidth}{!}{%
  \begin{tabular}{@{}lrrrrrr@{}}
    \toprule
    \multicolumn{7}{@{}l}{\textit{Panel B: selected $\rho=14$ scaling endpoints}}\\[-1pt]
    & & \multicolumn{2}{c}{ALMTON-S1} & \multicolumn{2}{c}{ALMTON-S2}
      & \multicolumn{1}{c}{SDP time} \\
    \cmidrule(lr){3-4}\cmidrule(lr){5-6}\cmidrule(l){7-7}
    Active rank & $n$ & $R$ & $T$ (s) & $R$ & $T$ (s) & S1/S2 (\%) \\
    \midrule
    Fixed $p=4$ & 5   & 13 & 0.298   & 5 & 0.118  & $88.7/88.1$ \\
                & 100 & 11 & 42.467  & 5 & 19.530 & $96.5/96.6$ \\
    Full $p=n$  & 5   & 13 & 0.304   & 5 & 0.115  & $87.8/88.5$ \\
                & 100 & 41 & 177.235 & 5 & 23.731 & $96.9/97.1$ \\
    \bottomrule
  \end{tabular}}
  \end{minipage}}
\end{table}

\paragraph{LR3 results}
Both ALMTON policies attain the audited target in all 306 cases, including
dense rotations, full-rank third derivatives, and nonconvex starts.  This is
evidence of reliability on the constructed family, not of dominance:
AR$3$-Interp and Newton--CG also attain $306/306$, with much smaller recorded
times in this implementation.  The informative ALMTON ablation is internal.
S2 reduces the all-design median outer-iteration and logical-request counts
from 13 to 6 without changing the success set.  On the fixed-$p$ slice, the
iteration counts remain nearly constant while time grows with $n$, exposing
the ambient cost of one SDP request.  On the full-rank slice, S1 rises from 13
to 41 requests, whereas S2 remains at five or six; nevertheless, approximately
97\% of the $n=100$ wall time is still spent in the SDP-request pipeline.  Thus
the geometric backstop reduces repeated regularized models on this family but
does not make the present implementation computationally competitive or
establish an empirical complexity law.

\paragraph{Track II: generalized Rosenbrock stress test}
Let $Q$ be orthogonal and set $z=\mathbf 1+Q^\top(x-\mathbf 1)$.  The second
track uses
\begin{equation}
  R_{n,c,Q}(x)=\sum_{i=1}^{n-1}
  \left[c\bigl(z_{i+1}-z_i^2\bigr)^2+(1-z_i)^2\right],
  \qquad x_\star=\mathbf 1,\quad f_\star=0.
  \label{eq:exp2-rosenbrock}
\end{equation}
For every tested dimension, the canonical slice fixes $c=100$ and $Q=I$ and
uses the two deterministic starts
\begin{equation*}
  z_0^{\rm std}=(-1.2,1,\ldots,1),
  \qquad
  z_0^{\rm alt}=(-1.2,1,-1.2,1,\ldots),
\end{equation*}
together with five fixed gap-matched directions at each of the initial
objective gaps $10(n-1)$ and $100(n-1)$, for 12 starts in total.  At $n=20$ and
$n=50$, we augment this slice by crossing $c\in\{1,100,10^4\}$ with identity
and Haar coordinates.  Counting the already included $c=100$, $Q=I$ cells
only once, this selectively factorial design gives 204 cases per method.  The
coefficient and rotation comparisons below therefore use only balanced
subsets.

\FloatBarrier
\begin{table}[!htbp]
  \centering
  \caption{Generalized Rosenbrock outcomes.  Over all 204 cases, $G$ is
  audited global-target attainment, NGM a stationary nonglobal minimum, and
  $C\subseteq\mathrm{NS}$ administrative censoring among nonstationary
  failures.  The last three columns give $G/48$ on the balanced
  $n\in\{20,50\}$ stiffness subset.}
  \label{tab:exp2-rosen-outcomes}
  {\small
  \renewcommand{\arraystretch}{1.08}
  \setlength{\tabcolsep}{5pt}
  \begin{tabular}{@{}lrrrrrr@{}}
    \toprule
    & \multicolumn{3}{c}{All 204 cases}
    & \multicolumn{3}{c}{Balanced stiffness sweep: $G/48$} \\
    \cmidrule(lr){2-4}\cmidrule(l){5-7}
    Method & $G$ (\%) & NGM & $C$ & $c=1$ & $c=100$ & $c=10^4$ \\
    \midrule
    \textbf{ALMTON-S1} & 82 (40.20)  & 10 & 40 & $35/48$ & $18/48$ & $4/48$ \\
    \textbf{ALMTON-S2} & 123 (60.29) & 14 & 12 & $45/48$ & $34/48$ & $10/48$ \\
    \midrule
    AR$3$-Interp   & 181 (88.73) & 23 & 0 & $48/48$ & $40/48$ & $42/48$ \\
    Strong-Newton  & 171 (83.82) & 15 & 0 & $48/48$ & $36/48$ & $40/48$ \\
    Newton--CG     & 87 (42.65)  & 1  & 0 & $48/48$ & $16/48$ & $0/48$ \\
    L-BFGS         & 1 (0.49)    & 0  & 0 & $0/48$  & $0/48$  & $0/48$ \\
    \bottomrule
  \end{tabular}}
\end{table}

\FloatBarrier
\paragraph{Rosenbrock results and limitations}
S2 materially improves the ALMTON outcome over the full Rosenbrock design: the
within-budget global-target count increases from $82/204$ to $123/204$, the
stationary count increases from $92/204$ to $137/204$, and censoring decreases
from 40 runs to 12.  The improvement is not pointwise: S2 gains 45 targets
missed by S1 but loses four targets attained by S1.  It is also an internal
ALMTON improvement rather than a performance lead.  AR$3$-Interp and
Strong-Newton attain respectively $181/204$ and $171/204$ global targets; on
this test set, every S2 global target is also attained by AR$3$-Interp.
AR$3$-Interp is retained as a diagnostic legacy third-order comparator, so
these observations neither certify it as a reference AR$3$ implementation nor
support a state-of-the-art claim for ALMTON.

The canonical slice localizes the high-dimensional boundary.  S2 attains
$9/12$ targets at $n=5$, $10/12$ at each of $n=10,20,30$, and only $6/12$,
$3/12$, and $2/12$ at $n=50,75,100$.  On the balanced stiffness subset, the
S1/S2 target rates decrease from $72.9\%/93.8\%$ at $c=1$ to
$8.3\%/20.8\%$ at $c=10^4$, whereas AR$3$-Interp and Strong-Newton remain
above $80\%$ at $c=10^4$.  Moreover, 24 ALMTON runs reach stationary but
nonglobal local minima.  Thus a gradient-only success definition would
overstate recovery of the known solution, and the geometric backstop does not
remove attraction to every nonglobal basin.

All percentages are audited target-attainment rates under the stated
implementations and stopping policies, not intrinsic convergence
probabilities.  In particular, the native L-BFGS and Newton--CG stopping tests
do not coincide with the common terminal gradient gate, although their final
points are subjected to the same fresh audit.  The deterministic, selectively
factorial design, the mixed execution record, and the 52 censored ALMTON tasks
preclude a universal dimensional threshold or empirical complexity claim.
Taken together, the two tracks support a narrow conclusion: Stage~2 reduces
repeated regularized models and postpones degradation relative to Stage~1, but
the present SDP pipeline remains the dominant computational boundary and
ALMTON is not the best general-purpose method on this benchmark suite.
\FloatBarrier

\subsection{Experiment 3: Hairpin Trajectories and Basin Robustness}
\label{subsec:experiment_3:high-order}

The Hairpin Turn problem of
\cite[Example~1.1 and Appendix~A.4]{cartis2024efficient} contains a long,
narrow corridor followed by a pronounced reversal.  We use this deliberately
structured two-dimensional geometry to ask whether adaptive LM stabilization
allows the cubic-model iteration to remain operational through the turn, and
whether the resulting behavior persists under changes in the initial point.
The experiment has two complementary tracks: a fixed-start trajectory
diagnostic and a deterministic multi-start basin audit.  Neither track
identifies derivative order as the cause of an observed path or supports a
general ranking of nonconvex solvers.

We evaluate the reference Hairpin construction in the vertically reflected
coordinate $y=-y_{\rm ref}$.  If $g_{\rm hp}$ denotes the smooth turn component
defined in the reference, the implemented objective and quartic box penalty are
\begin{equation}
  \begin{aligned}
  f_{\rm hp}(x,y)
    &=g_{\rm hp}(x,y)+rx
      +50b(x;-0.4,0.5)+50b(y;-5,0),
      \qquad r=3\times10^{-4},\\[-1pt]
  b(z;a,c)
    &=\begin{cases}
      (z-a)^4, & z\leq a,\\
      0,       & a<z<c,\\
      (z-c)^4, & z\geq c.
    \end{cases}
  \end{aligned}
  \label{eq:hairpin-reflected}
\end{equation}
A high-accuracy reference computation gives the strict local minimizer
\[
  \widehat x=(-0.4608434,0.1075773)^\top,
  \qquad f_{\rm hp}(\widehat x)=-0.5190288.
\]
The displayed rectangles below specify starting points and plotting windows
only: every method is unconstrained, and no iterate is projected.

\paragraph{Common protocol}
The frozen protocol-v5 comparison contains GD with
$\alpha\in\{0.01,0.05\}$, ordinary full-step Newton, the retained
safeguarded-Newton implementation, AR$2$-Simple, AR$3$-Simple,
ALMTON-Simple, and ALMTON-Heuristic.  The validated non-interpolation
AR$2$/AR$3$ policies and the ALMTON parameters and numerical safeguards are
the same as in Experiment~1 and are not repeated here.  Each run is allowed
at most $K_{\max}=1000$ attempted outer iterations.  A target success is
recorded only if a fresh terminal audit verifies
\begin{equation}
  \|\nabla f_{\rm hp}(x_k)\|\leq10^{-8},\qquad
  \lambda_{\min}(\nabla^2f_{\rm hp}(x_k))\geq-10^{-8},\qquad
  \|x_k-\widehat x\|\leq10^{-3}.
  \label{eq:hairpin-success}
\end{equation}
The formal record passed its completion, source-hash, and semantic-validation
checks.  The comparison set is nevertheless focused rather than exhaustive,
as discussed below.  This retained safeguarded-Newton implementation is the
line-search baseline from the Experiment~1 code path and is distinct from the
Strong-Newton comparator used in Experiment~2.

\paragraph{Track I: fixed-start trajectory}
All eight methods start from the prescribed diagnostic point
$x_0=(0.5,0)^\top$, which was
fixed independently of the multi-start outcomes and is not a node of the grid
used in Track~II.  As shown in
\Cref{fig:hairpin_trajectory_comparison}, AR$2$-Simple, ALMTON-Simple, and
ALMTON-Heuristic pass \eqref{eq:hairpin-success} after 19, 60, and 23 reported
outer iterations, respectively.  AR$3$-Simple reaches the reference objective
and lies within $5.4\times10^{-9}$ of $\widehat x$, but terminates after a
regularization overflow with
$\|\nabla f_{\rm hp}(x)\|=2.59\times10^{-8}$ and therefore fails the declared
gradient gate.  Both GD choices exhaust the budget, ordinary Newton encounters
a singular initial Hessian, and the retained safeguarded-Newton implementation
stops away from the reference minimizer.  Thus the path is an illustrative
finite-budget diagnostic, not a convergence comparison by derivative order.

\FloatBarrier
\begin{figure}[!htbp]
  \centering
  \includegraphics[width=\linewidth]{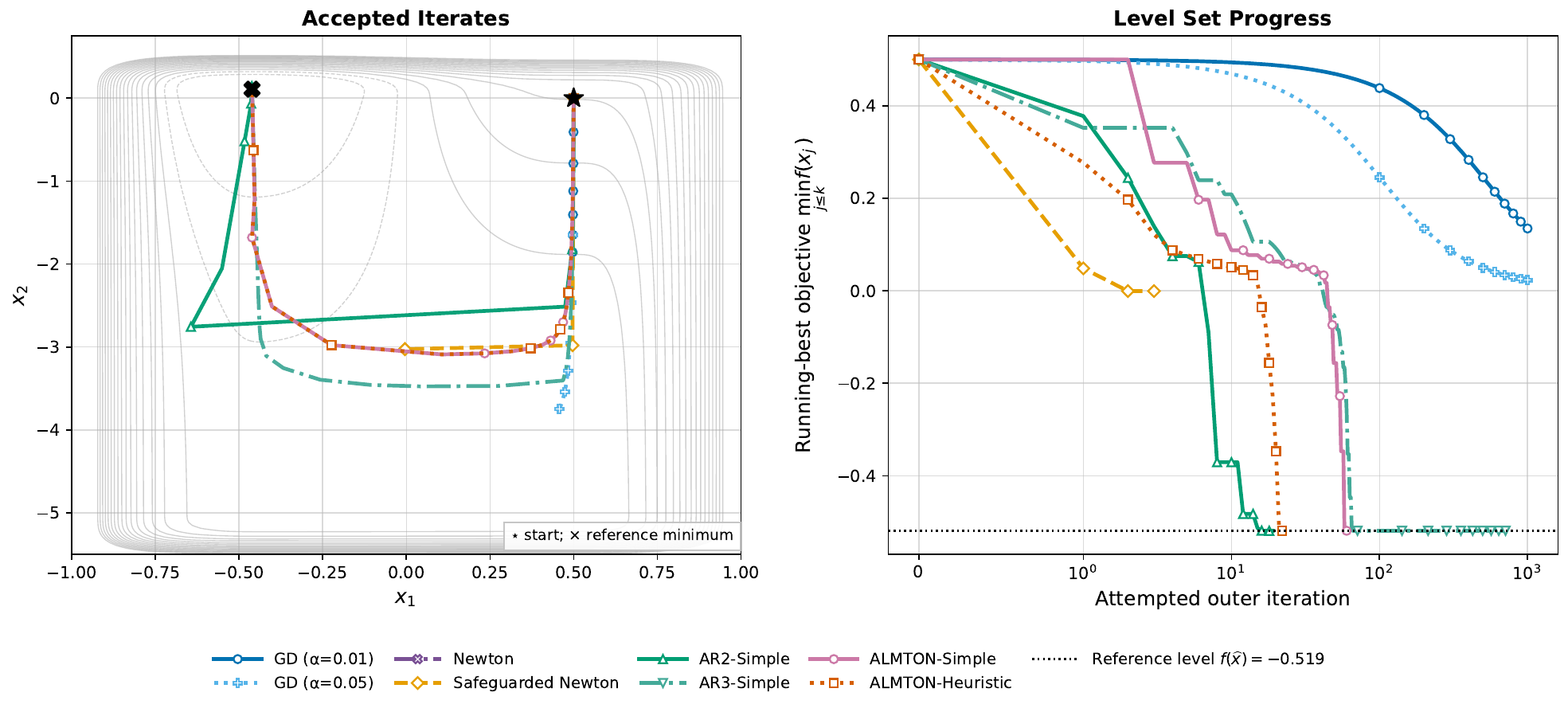}
  \caption{Hairpin Turn trajectories from $x_0=(0.5,0)^\top$.  The panels show
  accepted iterates and the running-best objective versus attempted outer
  iteration, respectively.  The dotted reference level alone does not imply
  success: AR$3$-Simple reaches it but fails the terminal gradient test in
  \eqref{eq:hairpin-success}.}
  \label{fig:hairpin_trajectory_comparison}
\end{figure}
\FloatBarrier

\paragraph{Track II: deterministic basin audit}
We use an endpoint-inclusive $20\times20$ Cartesian grid on
$[-1,1]\times[-5,0]$, giving 400 prescribed starts per method and 3,200 runs
in total, with no random replication.  The grid spans the complete reflected
corridor rather than only its entrance.  All terminal points are subjected to
the common audit \eqref{eq:hairpin-success}.  Oracle calls aggregate the
recorded objective, gradient, Hessian, and third-derivative evaluations,
including the final Hessian audit.

\begin{table}[!htbp]
  \centering
  \caption{Hairpin Turn basin audit on the common $20\times20$ grid.  Success
  is defined by \eqref{eq:hairpin-success}; ``Iters [IQR]'' is the
  success-conditioned median and IQR width.  ``Capped iters'' is the all-start
  median with failures set to $K_{\max}+1=1001$.  Time and oracle calls are
  success medians; method-specific counts and timings are descriptive.}
  \label{tab:hairpin_stats}
  \renewcommand{\arraystretch}{1.08}
  \setlength{\tabcolsep}{3pt}
  \begin{footnotesize}
    \begin{tabular*}{\textwidth}{@{\extracolsep{\fill}}lccccc@{}}
\toprule
\textbf{Solver} & \textbf{Success} & \textbf{Iters [IQR]} & \textbf{Capped iters} & \textbf{Time (s)} & \textbf{Oracle calls} \\
\midrule
\multicolumn{6}{l}{\textit{First-order baseline}} \\
GD ($\alpha=0.01$) & 84/400 (21.00\%) & 639 [336.5] & 1001 & 0.0268 & 646 \\
GD ($\alpha=0.05$) & 246/400 (61.50\%) & 256 [317.2] & 545 & 0.0138 & 263 \\
\addlinespace[2pt]
\multicolumn{6}{l}{\textit{Second-order methods}} \\
Newton & 175/400 (43.75\%) & 517 [513] & 1001 & 0.0599 & 1039 \\
Safeguarded Newton & 1/400 (0.25\%) & 7 [0] & 1001 & 0.00168 & 44 \\
AR$2$-Simple & 400/400 (100.00\%) & 15 [13] & 15 & 0.00835 & 43 \\
\addlinespace[2pt]
\multicolumn{6}{l}{\textit{Higher-order methods}} \\
AR$3$-Simple & 240/400 (60.00\%) & 29 [26] & 63.5 & 0.276 & 59 \\
ALMTON-Simple & 333/400 (83.25\%) & 29 [28] & 33.5 & 0.275 & 47 \\
ALMTON-Heuristic & 333/400 (83.25\%) & 13 [10] & 15 & 0.292 & 47 \\
\bottomrule
\end{tabular*}

  \end{footnotesize}
\end{table}
\FloatBarrier

\paragraph{Results and interpretation}
\Cref{tab:hairpin_stats} shows that AR$2$-Simple reaches the reference
minimum from all 400 starts.  ALMTON-Simple and ALMTON-Heuristic succeed from
the same 333 starts ($83.25\%$), compared with 240 ($60.00\%$) for
AR$3$-Simple.  The ALMTON and AR$3$ success sets are not nested: 123 starts
are solved only by ALMTON and 30 only by AR$3$.  AR$2$ is also markedly
cheaper in this implementation, with a median time of $0.00835$ seconds versus
$0.275$ and $0.292$ seconds for the two ALMTON variants.

The ALMTON variants have identical success masks, terminal points, oracle
records, and SDP counts.  On their 333 successes, Heuristic reports a median
of 13 outer iterations rather than 29, but it uses the same 14,609 physical
SDP solves over the full grid and is slightly slower.  Thus, the smaller outer
count reflects how regularization trials are grouped, not a measured speedup.
The 67 ALMTON failures are SDP residual-certificate rejections, whereas the
160 AR$3$ failures result from regularization overflow.

The low success rate of the retained safeguarded-Newton row is specific to its
direction and line-search implementation.  Exploratory Newton and trust-region
records were not regenerated under the frozen protocol-v5 contract and are
therefore excluded from the formal comparison.  Consequently, these results
do not support superiority claims over modern Newton or trust-region methods.
More broadly, this deterministic grid on one constructed geometry provides a
descriptive basin comparison, not a statistical or general robustness claim.
\FloatBarrier

\section{Conclusion and Future Work}
\label{sec:conclusion}

In this work, we introduced ALMTON as an adaptive LM globalization of the
unregularized third-order Newton method.  Its main advantage is a unified
global-to-local construction: quadratic regularization restores well-posedness
only when needed, while every trial step remains a cubic model handled by the
same SDP template rather than a quartically regularized subproblem.  For the
Heuristic strategy, we establish finite termination at an
$\epsilon$-first-order point and an $\mathcal O(\epsilon^{-2})$ evaluation
complexity bound under the stated assumptions and exact-oracle convention.  If
an accepted iterate enters the stated neighborhood of a positive-definite
minimizer, subsequent nonterminal steps recover the unregularized recursion and
its local cubic rate.

The experiments support the intended role of ALMTON as a robust globalization
mechanism.  In Experiment~1, it raises UTON's audited success fraction from
$55.42\%$ to $99.91\%$ over 4,500 prescribed starts.  Both S1/S2 numerical
policies attain all 306 controlled LR3 targets in Experiment~2, and on the Hairpin
geometry they reach 333 of 400 targets, compared with 240 for the tested
AR$3$-Simple implementation.  These are higher audited target counts on the
prescribed grids, not a general reliability claim.  They also do not imply
overall empirical dominance: the Rosenbrock study exposes stiffness
sensitivity, while AR$2$-Simple is faster and more successful on several tests.

The main practical limitation is the current exact-SDP realization, whose
request pipeline accounts for $96.5$--$97.1\%$ of ALMTON's recorded time on
the selected $n=100$ LR3 endpoints.  Future work will therefore investigate inexact spectral or
Krylov-based subproblem procedures, structure-exploiting tensor
representations, and broader controlled comparisons with modern Newton and
trust-region methods.  These developments should be evaluated using common
terminal audits together with arithmetic, wall-time, and peak-memory measures.

\appendix
\section{Proof of the Positive LM Backstop}
\label{app:positive-alpha-backstop}

\begin{proof}[Proof of \cref{lem:positive-alpha-backstop}]
Write $r:=\nabla f(x_k)$, $H:=\nabla^2 f(x_k)$, $T:=\nabla^3 f(x_k)$,
$K:=(\sum_i\|T_i\|_2^2)^{1/2}=\|h\|$, and
$\mu:=\lambda_{\min}(H)+2\sigma$.  If $\mu>0$ and
$\mu^2>2\|r\|K$, set $\Delta:=(\mu^2-2\|r\|K)^{1/2}$ and, when
$K>0$, $R:=(\mu-\Delta)/K$.  On the closed ball $B_R$, the map
\[
  \Psi(s):=-(H+2\sigma I)^{-1}\!\left(r+\tfrac12T[s,s]\right)
\]
maps $B_R$ into itself because
$\|T[s,s]\|^2=\sum_i(s^\top T_i s)^2\le K^2\|s\|^4$ and
$\|T[s]\|_2=\|\sum_i s_iT_i\|_2\le K\|s\|$.  Thus
$\|\Psi(s)\|\le\mu^{-1}(\|r\|+\tfrac12KR^2)=R$ for $\|s\|\le R$,
where $\|r\|+\tfrac12KR^2=\mu R$.  Brouwer's theorem therefore gives
a fixed point $s_\sigma$, and throughout $B_R$,
\[
  \nabla^2m_{f,x_k}(x_k+s;\sigma)
  =H+2\sigma I+T[s]\succeq(\mu-KR)I=\Delta I.
\]
The fixed-point equation is exactly
$\nabla m_{f,x_k}(x_k+s_\sigma;\sigma)=0$; thus $x_k+s_\sigma$ is a
strict local minimizer.  The case $K=0$ is the strictly convex quadratic
case.

It remains to verify the two inequalities used above.  Let
$a:=\sqrt{\tfrac32(\|g\|\|h\|+g^\top h)}$.  The definition
$\alpha_{LM}=a-\min\{0,\lambda_{\min}(H)\}$ implies $\mu>0$ and
$\mu^2>2\|r\|K$ whenever $\sigma\ge\alpha_{LM}$ and $\sigma>0$.
Indeed, if $\|r\|K>0$, then $\mu\ge2a$ when
$\lambda_{\min}(H)\ge0$ and
$\mu\ge2a-\lambda_{\min}(H)>2a$ otherwise, while
$4a^2\ge6\|r\|K>2\|r\|K$.  If $\|r\|K=0$, then $a=0$ and
$\alpha_{LM}=-\min\{0,\lambda_{\min}(H)\}$.  If
$\lambda_{\min}(H)\ge0$, positivity of $\sigma$ gives $\mu>0$; if
$\lambda_{\min}(H)<0$, then $\sigma\ge-\lambda_{\min}(H)$ and hence
$\mu=\lambda_{\min}(H)+2\sigma\ge-\lambda_{\min}(H)>0$.
Thus in either case $\mu^2>0=2\|r\|K$.
\end{proof}

\section{Experiment 1: Benchmark Definitions and Detailed Results}
\label{app:experiment-1-details}

This appendix gives a self-contained specification of Experiment~1.  The first
four objectives in \Cref{tab:exp1-benchmarks} reproduce the test-function
context requested by the reviewer and used by Silina and Zhang
\cite[Sec.~4.1 and Table~1]{silina2022unregularized}; Styblinski--Tang extends
their paper-facing set using an instance already present in the companion
implementation \cite{zhang2023thirdorder}.  The initialization rectangles agree with the corresponding
Newton-fractal implementation.  They are sampling rectangles, not constraints.

\begin{table}[htbp]
  \centering
  \caption{Experiment~1 benchmarks and reference minimizers.  Rectangles
  define endpoint-inclusive $30\times30$ starting grids, not constraints;
  references are used only for post hoc basin interpretation.}
  \label{tab:exp1-benchmarks}
  {\small
  \renewcommand{\arraystretch}{1.02}
  \setlength{\tabcolsep}{3pt}
  \setlength{\aboverulesep}{0.35ex}
  \setlength{\belowrulesep}{0.35ex}
  \resizebox{\linewidth}{!}{%
  \begin{tabular}{@{}
    >{\raggedright\arraybackslash}p{0.15\linewidth}
    >{\raggedright\arraybackslash}p{0.40\linewidth}
    >{\centering\arraybackslash}p{0.21\linewidth}
    >{\raggedright\arraybackslash}p{0.44\linewidth}@{}}
    \toprule
    Problem & Objective $f(x,y)$ & Initial rectangle & Reference minimizer(s) \\
    \midrule
    Beale &
    $\begin{aligned}[t]
      &(1.5-x+xy)^2\\[-2pt]
      &\quad +(2.25-x+xy^2)^2\\[-2pt]
      &\quad +(2.625-x+xy^3)^2
    \end{aligned}$ &
    $[2,3.5]\times[0,1]$ &
    $(3,0.5)$; global, $f=0$ \\
    \cmidrule(lr){1-4}
    Bohachevsky &
    $\begin{aligned}[t]
      x^2+2y^2&-0.3\cos(3\pi x)\\[-2pt]
      &-0.4\cos(4\pi y)+0.7
    \end{aligned}$ &
    $[-1,1]^2$ &
    $(0,0)$; global, $f=0$ \\
    \cmidrule(lr){1-4}
    Himmelblau &
    $(x^2+y-11)^2+(x+y^2-7)^2$ &
    $[-5,5]^2$ &
    $(3,2)$, $(-2.805118,3.131312)$,
    $(-3.779310,-3.283186)$, $(3.584428,-1.848126)$;
    global, $f=0$ \\
    \cmidrule(lr){1-4}
    McCormick &
    $\begin{aligned}[t]
      &\sin(x+y)+(x-y)^2\\[-2pt]
      &\quad -1.5x+2.5y+1
    \end{aligned}$ &
    $[-1.5,4]\times[-3,4]$ &
    $(-0.54719,-1.54719)$; box minimum,
    $f\approx-1.913223$; local $(2.5944,1.5944)$,
    $f\approx1.228370$ \\
    \cmidrule(lr){1-4}
    Styblinski--Tang &
    $\tfrac12(x^4+y^4)-8(x^2+y^2)+2.5(x+y)$ &
    $[-6,6]^2$ &
    $(a,a)$ global; $(a,b)$, $(b,a)$, $(b,b)$ local,
    where $a=-2.903534$ and $b=2.746800$ \\
    \bottomrule
  \end{tabular}}}
\end{table}

In the Beale objective, the middle residual is squared, as in the standard
definition and the companion implementation; its missing exponent in
\cite[Table~1]{silina2022unregularized} is a typographical omission.  The
McCormick objective is unbounded below; its box minimum and the other listed
points are reference points only, since all methods are unconstrained.

\begin{table}[!htbp]
  \centering
  \caption{Fixed implementation settings for Experiment~1.}
  \label{tab:exp1-settings}
  \renewcommand{\arraystretch}{0.98}
  \setlength{\tabcolsep}{4pt}
  \setlength{\aboverulesep}{0.2ex}
  \setlength{\belowrulesep}{0.2ex}
  \begin{footnotesize}
  \begin{tabular}{@{}>{\raggedright\arraybackslash}p{0.19\textwidth}>{\raggedright\arraybackslash}p{0.73\textwidth}@{}}
    \toprule
    Component & Setting \\
    \midrule
    Grid and budget & Endpoint-inclusive $30\times30$ grid on each of five functions (4,500 starts per method); 100 outer iterations/phases; no random replication. \\
    \cmidrule(lr){1-2}
    Common audit & Fresh finite $f$ and gradient, with $\|\nabla f(x_{\rm final})\|\le10^{-8}$; the Hessian audit uses $\lambda_{\min}\ge-10^{-6}$. \\
    \cmidrule(lr){1-2}
    GD; Newton--CG & GD: $\alpha=0.01$, no line search.  SciPy Newton--CG: exact Hessian, \texttt{xtol}$=10^{-8}$. \\
    \cmidrule(lr){1-2}
    UTON & Unregularized cubic Taylor model under the strict-MOSEK policy. \\
    \cmidrule(lr){1-2}
    ALMTON & Simple and Heuristic use $(c,l,\eta,\gamma)=(0.1,0.01,0.1,2)$ and the Stage-2 geometric $\sigma$ policy; the accepted-phase $\sigma$ hint is off. \\
    \cmidrule(lr){1-2}
    AR$2$-Simple & $(\eta_1,\eta_2,\gamma_1,\gamma_2,\sigma_0,\sigma_{\min})=(0.01,0.95,0.5,3,1,10^{-8})$; non-interpolation update; model tolerance $10^{-9}$. \\
    \cmidrule(lr){1-2}
    AR$3$-Simple & $(\eta_1,\eta_2,\gamma_1,\gamma_2,\sigma_{\min})=(0.01,0.95,0.5,3,10^{-8})$; non-interpolation update; inner AR$2$ tolerance $10^{-9}$. \\
    \cmidrule(lr){1-2}
    SDP policy & MOSEK~10.2.10 via CVXPY~1.4.1; one thread; no cross-solver fallback; unknown statuses rejected; original-scale certificates; refinement accuracy $10^{-9}$. \\
    \cmidrule(lr){1-2}
    Certificate tolerances & Absolute/relative: residual $10^{-9}/10^{-7}$, objective $10^{-8}/10^{-8}$, PSD $10^{-10}/10^{-8}$; strict-PD relative tolerance $10^{-10}$. \\
    \cmidrule(lr){1-2}
    ALMTON reuse & Cached Taylor data and Hessian eigenvalues; cancellation-resistant reductions, fresh-gradient safeguard, and DPP template reuse; no warm start or peak-memory claim. \\
    \cmidrule(lr){1-2}
    Execution & Python~3.12.7, NumPy~1.26.4, SciPy~1.11.4; six workers; per-worker wall time; one timing per start. \\
    \bottomrule
  \end{tabular}
  \end{footnotesize}
\end{table}

\begin{table}[!htbp]
  \centering
  \caption{Functionwise Experiment~1 results: audited successes/starts
  [median iterations over successes].  A dash means no success; iteration
  semantics are method-specific.}
  \label{tab:exp1-by-function}
  \renewcommand{\arraystretch}{1.10}
  \setlength{\tabcolsep}{4pt}
  \setlength{\aboverulesep}{0.35ex}
  \setlength{\belowrulesep}{0.35ex}
  \resizebox{\textwidth}{!}{%
  \begin{tabular}{lrrrrrr}
    \toprule
    Method & Beale & Himmelblau & McCormick & Bohachevsky & Styblinski--Tang & All \\
    \midrule
    GD ($\alpha=0.01$) & $0/900$ [--] & $895/900$ [53] & $0/900$ [--] & $900/900$ [66] & $900/900$ [62] & $2695/4500$ [62] \\
    \cmidrule(lr){1-7}
    Newton--CG & $804/900$ [11] & $900/900$ [9] & $842/900$ [7] & $852/900$ [7] & $840/900$ [7] & $4238/4500$ [8] \\
    \cmidrule(lr){1-7}
    UTON & $123/900$ [3] & $379/900$ [3] & $900/900$ [3] & $836/900$ [3] & $256/900$ [3] & $2494/4500$ [3] \\
    \cmidrule(lr){1-7}
    AR$2$-Simple & $900/900$ [9] & $900/900$ [8] & $900/900$ [5] & $900/900$ [9] & $900/900$ [7] & $4500/4500$ [7] \\
    \cmidrule(lr){1-7}
    AR$3$-Simple & $645/900$ [20] & $676/900$ [9] & $726/900$ [12] & $688/900$ [18] & $700/900$ [6] & $3435/4500$ [14] \\
    \midrule
    \textbf{ALMTON-Simple} & $900/900$ [11] & $900/900$ [5] & $900/900$ [4] & $896/900$ [5] & $900/900$ [9] & $4496/4500$ [6] \\
    \cmidrule(lr){1-7}
    \textbf{ALMTON-Heuristic} & $900/900$ [5] & $900/900$ [4] & $900/900$ [4] & $896/900$ [4] & $900/900$ [4] & $4496/4500$ [4] \\
    \bottomrule
  \end{tabular}}
\end{table}

\FloatBarrier
The four ALMTON failures are the same for both variants: the Bohachevsky starts
$(\pm1,\pm0.3103448276)$ terminate after an original-scale SDP residual
rejection.  Across all starts, Simple and Heuristic have identical terminal
points, success labels, objective, gradient, Hessian, and third-derivative counts, and
36,489 logical and 36,632 physical SDP attempts; the latter total includes 143
precision or scaling retries of the same logical subproblems.  Consequently,
the difference between their reported iteration medians in
\Cref{tab:exp1-by-function} is solely a difference in how repeated
regularization trials are grouped into phases.

\section*{Acknowledgments}
We thank Dr.~Yang Liu for helpful advice on the numerical experiments, and Dr.~Runyu Zhang for valuable comments and revision suggestions.

\scriptsize
\bibliographystyle{siamplain}
\bibliography{references}
\end{document}